\def\dis{\displaystyle}
\newtheorem{thm}{Theorem}[section]
\newtheorem{cor}[thm]{Corollary}
\newtheorem{prop}[thm]{Proposition}
\newtheorem{lem}[thm]{Lemma}
\theoremstyle{definition}
\newtheorem{defn}[thm]{Definition}
\theoremstyle{remark}
\newtheorem{rem}[thm]{Remark}
\numberwithin{equation}{section}
\begin{document}

\date{}
\date{}
\title{On mean curvature flow with driving force starting as singular initial hypersurface}

\author{Ryunosuke Mori ${}^1$, Longjie ZHANG ${}^2$}

\date{September, 2016\\
${}^1$ 
Corresponding author University:Graduate School of Mathematical Sciences, The University of Tokyo. Address:3-8-1 Komaba Meguro-ku Tokyo 153-8914, Japan. Email:moriryu@ms.u-tokyo.ac.jp, 45c136045@gmail.com\\
${}^2$ 
Corresponding author University:Graduate School of Mathematical Sciences, The University of Tokyo. Address:3-8-1 Komaba Meguro-ku Tokyo 153-8914, Japan. Email:zhanglj@ms.u-tokyo.ac.jp, zhanglj919@gmail.com
\\
The second author is the Research Fellow of Japan Society for the Promotion of Science, Number: 17J05160.}

\maketitle

\begin{minipage}{140mm}

{{\bf Abstract:} We consider an axisymmetric closed hypersurface evolving by its mean curvature with driving force under singular initial hypersurface. We study this problem by level set method. We give some criteria to judge whether the interface evolution is fattening or non-fattening. 

{\bf Keywords and phrases:} mean curvature flow; driving force; level set method; singularity; fattening. }

{\bf 2010MSC:} 35A01, 35A02, 35K55, 53C44.

\end{minipage}

$$$$

\section{Introduction}\large

 This paper studies the mean curvature flow with driving force under singular initial data
\begin{equation}\label{eq:cur}
V=-\kappa+A\, \ \textrm{on}\ \Gamma(t)\subset \mathbb{R}^{n+1} ,
\end{equation}
\begin{equation}\label{eq:initial1}
\Gamma(0)=\Gamma_0,
\end{equation}
where $\Gamma(t)$ is a smooth family of hypersurfaces embedded in $\mathbb{R}^{n+1}$, $V$ is the outer normal velocity of $\Gamma(t)$, $\kappa$ is the mean curvature of $\Gamma(t)$ and $A>0$, called driving force, is a constant. Here the sign of $\kappa$ is taken so that the problem is parabolic. For example, under the definition, the mean curvature of unit sphere in $\mathbb{R}^{n+1}$ is $n$.

\begin{figure}[htbp]
	\begin{center}
            \includegraphics[height=8.0cm]{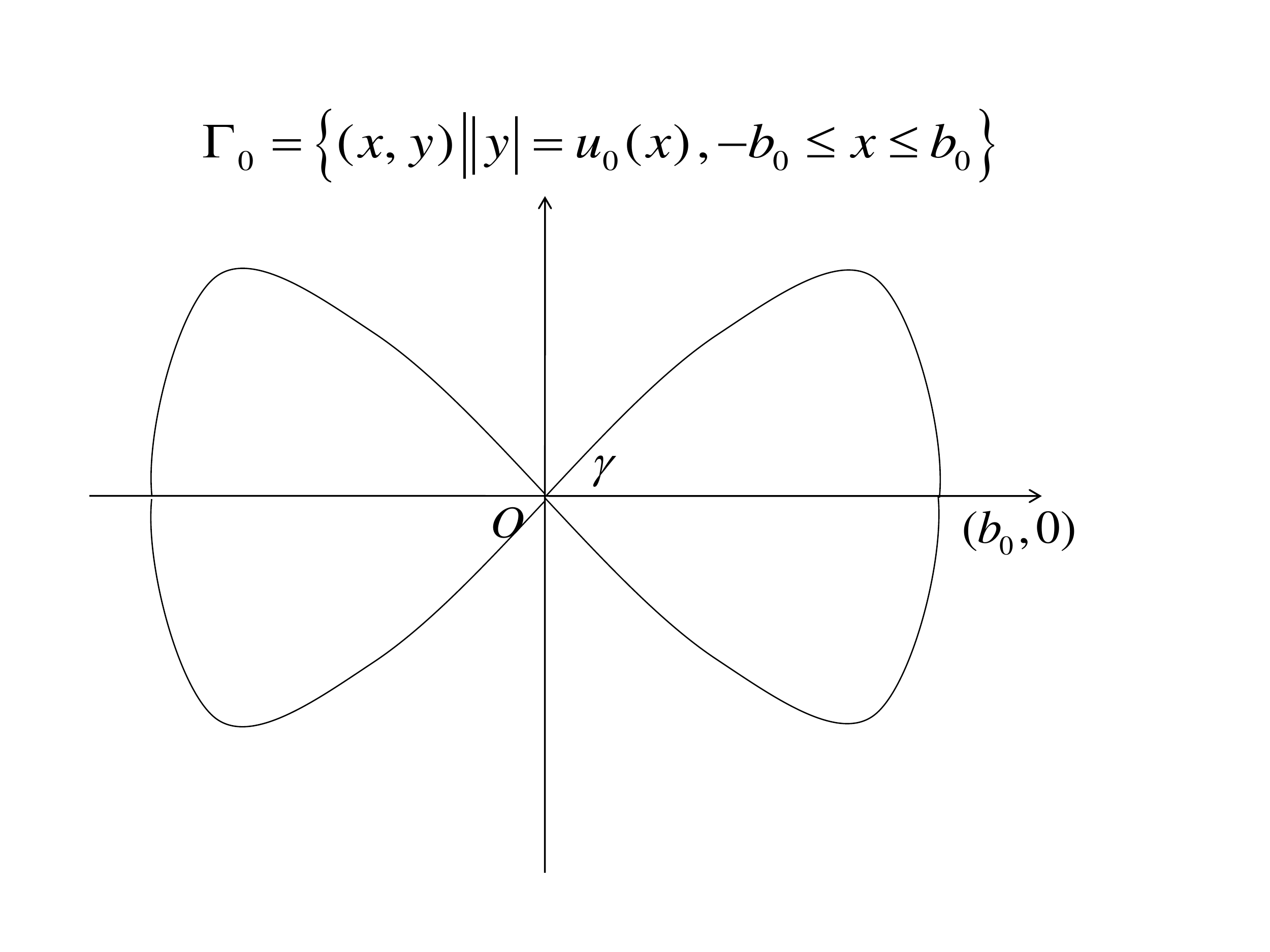}
		\vskip 0pt
		\caption{Initial curve $\Gamma_0$}
        \label{fig:u03}
	\end{center}
\end{figure}

In this paper we consider that the initial data $\Gamma_0$ is smooth except for origin and can be wrriten into 
\begin{equation}\label{eqn:initialcurveg}
\Gamma_0=\{(x,y)\in\mathbb{R}\times\mathbb{R}^{n}\mid |y|=u_0(x), -b_0\leq x\leq b_0\},
\end{equation}
where $u_0(x)$ is even and satisfies $u_0(x)>0$, for $x\in (-b_0,0)\cup (0,b_0)$, $u_0(0)=u_0(-b_0)=u_0(b_0)=0$. Since $\Gamma_0$ is smooth at $(-b,0,\cdots,0)$ and $(b,0,\cdots,0)$, it is easy to see that $u_{0}^{\prime}(-b_0)=-u_{0}^{\prime}(b_0)=+\infty$.

Let 
\begin{equation}\label{eqn:contactangle}
\gamma:=\lim\limits_{x\rightarrow 0^+}\arctan u_0^{\prime}(x)\in[0,\pi/2],
\end{equation}
seeing Figure \ref{fig:u03}.

{\bf Main assumptions for $\gamma=\pi/2$.}
Under the condition $\gamma=\pi/2$, let $\Lambda_0=\Gamma_0\cap \{(x,y)\in \mathbb{R}\times\mathbb{R}^n\mid x\geq0\}$.

We consider another problem.
\begin{equation}
V=-\kappa+A\, \ \textrm{on}\ \Lambda^+(t)\subset \mathbb{R}^{n+1},\tag{\ref{eq:cur}*}
\end{equation}
\begin{equation}
\Lambda^+(0)=\Lambda_0\tag{\ref{eq:initial1}*}
\end{equation}
(Figure \ref{fig:u02}).
\begin{figure}[htbp]
	\begin{center}
            \includegraphics[height=8.0cm]{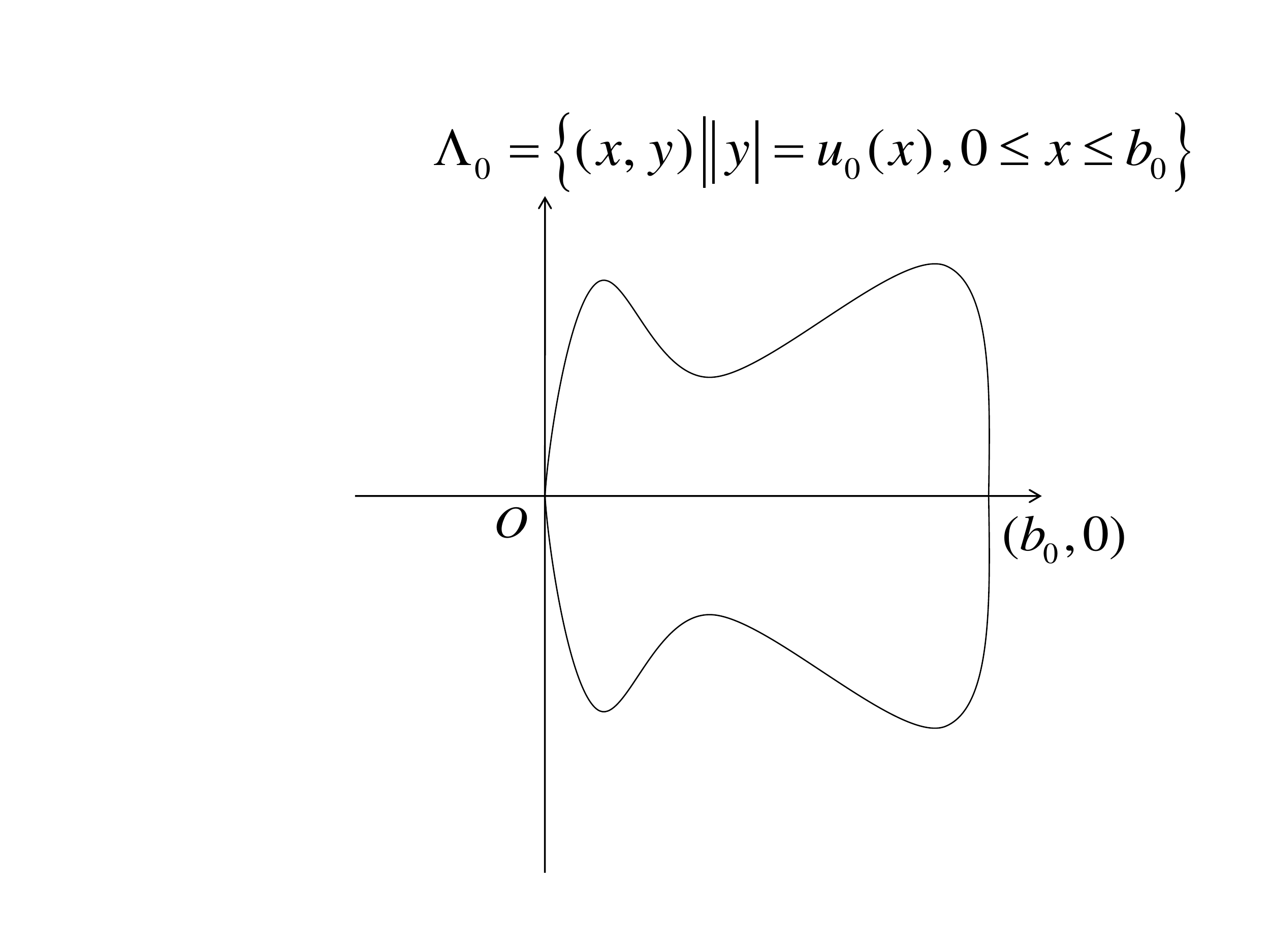}
		\vskip 0pt
		\caption{Initial curve $\Lambda_0$ for $\gamma=\pi/2$}
        \label{fig:u02}
	\end{center}
\end{figure}
We consider this problem by level set method. By the theory in \cite{G}, there exists a unique viscosity solution $\phi$ of the following level set equation
$$
\left\{
\begin{array}{lcl}
\dis{\phi_t=|\nabla \phi|\textmd{div}(\frac{\nabla \phi}{|\nabla \phi|})+A|\nabla \phi|}\ \textrm{in}\  \mathbb{R}^{n+1}\times(0,T),\\
\phi(x,y,0)=a_1(x,y),
\end{array}
\right.
$$
where $a_1(x,y)$ satisfies $\Lambda_0=\{(x,y)\mid a_1(x,y)=0\}$ and $\{(x,y)\mid a_1(x,y)>0\}$ is bounded. The results in appendix show that the zero set of $\phi$ is not fattening in a short time. Indeed, thanks to Theorem \ref{thm:gu}, the zero set of $\phi$ can be written into
$$
\Lambda^+(t)=\{(x,y)\mid \phi(x,y,t)=0\}=\{(x,y)\in\mathbb{R}^{n+1}\mid |y|=v(x,t), a_*(t)\leq x\leq b_*(t)\},\ 0<t<T_*.
$$ 
Moreover, $(v,a_*,b_*)$ is the solution of the following free boundary problem
\begin{equation}
\left\{
\begin{array}{lcl}
\dis{u_t=\frac{u_{xx}}{1+u_x^2}-\frac{n-1}{u}+A\sqrt{1+u_x^2}},\ x\in(a_*(t),b_*(t)),\ 0<t< T_*,\\
u(a_*(t),t)=0,\ u(b_*(t),t)=0,\ 0\leq t< T_*,\\
u_x(a_*(t),t)=\infty,\ u_x(b_*(t),t)=-\infty,\ 0\leq t<T_*,\\
u(x,0)=u_0(x),\ 0\leq x\leq b_0,\\
u(x,t)>0,\ x\in(a_*(t),b_*(t)),\ 0<t< T_*.
\end{array}
\right.\tag{*}
\end{equation}
In this paper, $a_*$ and $b_*$ are called the end points of $\Lambda^+(t)$.

$Assumption$ $(A+)$: There exists $\delta>0$ such that $a_*(t)\geq0$ for $0\leq t<\delta$.

$Assumption$ $(A-)$: There exists $\delta>0$ such that $a_*(t)<0$ for $0<t<\delta$.

Here we give some sufficient conditions such that main assumptions hold.

If $\kappa(O)<A$, then, since 
$$
a_*^{\prime}(0)=\kappa(O)-A<0,
$$
$a_*(t)<0$ for any small $t>0$. Similarly, if $\kappa(O)>A$, $a(t)>0$ for any small $t>0$. Here 
$$
\kappa(O)=\lim\limits_{x\rightarrow 0^+}\left(-\frac{u_{xx}}{(1+u_x^2)^{3/2}}+\frac{n-1}{u\sqrt{1+u_x^2}}\right)
$$
denotes the mean curvature of $\Lambda_0$ at the origin $O$. 

\begin{figure}[htbp]
	\begin{center}
            \includegraphics[height=8.0cm]{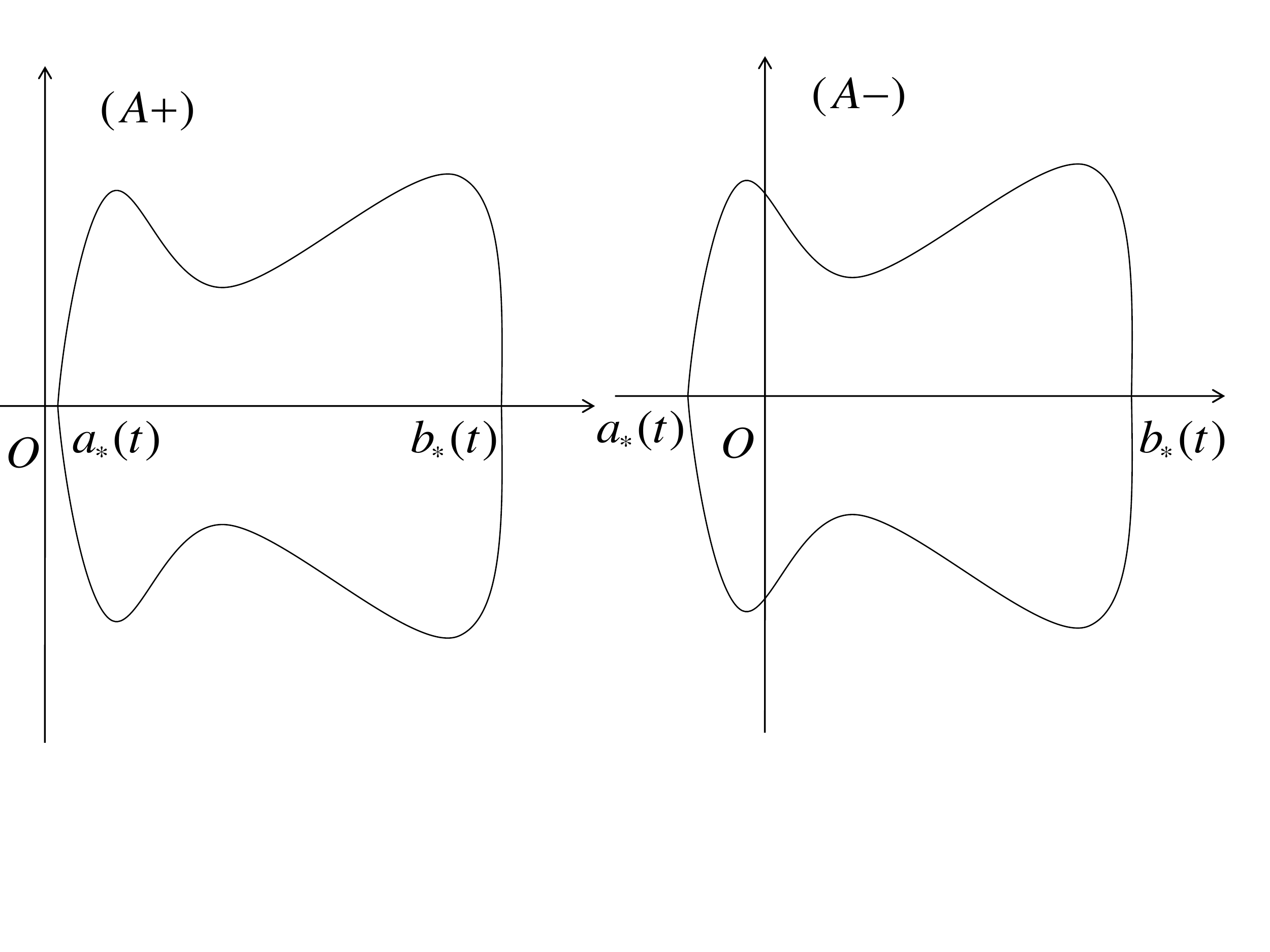}
		\vskip 0pt
		\caption{Main assumptions for $\gamma=\pi/2$}
        \label{fig:apm}
	\end{center}
\end{figure}

Here we present our main results.

\begin{thm}\label{thm:equal90}
Let $\Gamma_0$ and $\gamma$ be defined by (\ref{eqn:initialcurveg}) and (\ref{eqn:contactangle}).

Assume $\gamma=\pi/2$, $n\geq2$.

(1) If the assumption $(A-)$ holds, then there exists $T>0$ such that the interface evolution $\Gamma(t)$ for (\ref{eq:cur}) with initial curve $\Gamma_0$ is not fattening for $0\leq t<T$.

(2) If the assumption $(A+)$ holds, then the interface evolution $\Gamma(t)$ for (\ref{eq:cur}) with initial data $\Gamma_0$ is fattening.
\end{thm}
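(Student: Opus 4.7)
The plan is to handle both parts by comparing the level-set evolution $\Gamma(t)=\{\phi(\cdot,t)=0\}$ of $\Gamma_{0}$ with the axisymmetric free-boundary evolution $\Lambda^{+}(t)$ of problem (*) and its reflected counterpart $\Lambda^{-}(t):=\{(x,y):|y|=v(-x,t),\ -b_{*}(t)\le x\le -a_{*}(t)\}$. By the $x\mapsto -x$ symmetry of (*), $\Lambda^{-}(t)$ is the classical free-boundary evolution from the left half $\Gamma_{0}\cap\{x\le 0\}$, and both $\Lambda^{\pm}(t)$ are smooth for $0<t<T_{*}$. The (non-)fattening of $\Gamma(t)$ near the origin is entirely governed by how $\Lambda^{\pm}(t)$ meet the hyperplane $\{x=0\}$, which is exactly what assumptions $(A\pm)$ encode.

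For part (1), under $(A-)$, I would prove non-fattening by trapping $\Gamma(t)$ between smooth barriers that collapse onto a common lower-dimensional set. For $\varepsilon>0$ small, let $\Gamma_{0}^{\varepsilon,+}$ be a smooth axisymmetric closed hypersurface enclosing an $\varepsilon$-neighborhood of the compact region bounded by $\Gamma_{0}$, and let $\Gamma_{0}^{\varepsilon,-}$ be a pair of smooth axisymmetric closed hypersurfaces sitting strictly inside the two lobes of $\Gamma_{0}$. Classical short-time existence yields smooth MCF-with-driving evolutions $\Gamma^{\varepsilon,\pm}(t)$, and the comparison principle for the level-set equation traps $\Gamma(t)$ between them. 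The crux is that under $(A-)$ both families converge, as $\varepsilon\downarrow 0$, to a common Lipschitz hypersurface, namely the boundary of the union of the open regions enclosed by $\Lambda^{+}(t)$ and $\Lambda^{-}(t)$: since $a_{*}(t)<0$, both curves genuinely cross $\{x=0\}$, the neck at the origin is instantly resolved, and the limit set has empty interior, leaving no room for a thickened zero set.

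For part (2), under $(A+)$, I would exhibit two distinct limit evolutions of $\Gamma_{0}$ that are both forced inside $\{\phi(\cdot,t)=0\}$ by comparison, which gives fattening. The \emph{inner} family consists of disjoint pairs of smooth axisymmetric closed hypersurfaces sitting inside the two lobes of $\Gamma_{0}$; under $(A+)$, $a_{*}(t)\ge 0$ gives $\Lambda^{+}(t)\subset\{x\ge 0\}$, and symmetrically $\Lambda^{-}(t)\subset\{x\le 0\}$, so the inner-limit evolution remains two disjoint pieces separated by $\{x=0\}$ for $0<t<\delta$. The \emph{outer} family consists of connected smooth axisymmetric closed hypersurfaces enclosing $\Gamma_{0}$ together with a fixed small ball about the origin; short-time continuity of smooth MCF-with-driving keeps a neighborhood of the origin inside the evolved region. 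These two limit evolutions differ on a nonempty open set near the origin, and the standard comparison-principle argument then forces this open set to sit inside $\{\phi(\cdot,t)=0\}$, which is precisely fattening.

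The main obstacle I anticipate is the convergence step in part (1): one has to show that the barrier evolutions $\Gamma^{\varepsilon,\pm}(t)$ really approach the boundary of the union of the regions enclosed by $\Lambda^{+}(t)$ and $\Lambda^{-}(t)$ in a sense strong enough to pinch the zero set of $\phi$, and in particular that the singular axis $\{y=0\}$, where the term $-(n-1)/u$ in (*) blows up and $\Lambda^{\pm}(t)$ meet vertically, harbors no extra limit points. The representation of the right-half level-set evolution via problem (*) handles the right side, $x\mapsto -x$ symmetry handles the left, but stitching these across $\{x=0\}$ depends decisively on the transversal crossing $a_{*}(t)<0$. The hypothesis $n\ge 2$ is what enforces the free-boundary condition $u(a_{*}(t),t)=0$ in (*) and justifies using $\Lambda^{\pm}(t)$ as the correct limit candidates.
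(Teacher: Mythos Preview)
Your overall architecture---approximate $\Gamma_{0}$ by smooth hypersurfaces from inside and outside and compare the limits---matches the paper's, and your argument for part~(2) is essentially the paper's: the inner evolution splits via Lemma~\ref{lem:sep}, and the outer evolution stays connected. The genuine gap is in part~(1), where you correctly identify the main obstacle but propose no mechanism to overcome it.

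The paper does \emph{not} argue that both barrier families converge to the boundary of the region enclosed by $\Lambda^{+}(t)\cup\Lambda^{-}(t)$. Instead it proves a weighted uniqueness result (Proposition~\ref{pro:uniq2}): if two evolutions from the same singular initial data both bound $\alpha(t)$-domains with $\int_{0}^{T}\alpha(t)^{-2}\,dt<\infty$, then they coincide. The integrability hypothesis is what allows a Gronwall argument to absorb the singular zero-order coefficient $(n-1)/\bigl((\rho-u_{1})(\rho-u_{2})\bigr)$ arising from the term $-(n-1)/u$ in the horizontal graph equation; this is where $n\ge 2$ actually enters, not in the free-boundary condition of~(*) as you wrote. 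To verify the escape-rate hypothesis, the paper constructs an explicit sub-solution $\underline{u}$ (Lemma~\ref{lem:subsolution}) with $\underline{u}(0,t)\ge Ct^{3/8}$, obtained by gluing a sphere of radius $t^{3/4}$ to two evolving balls; since $\int_{0}^{T}t^{-3/4}\,dt<\infty$, Corollary~\ref{cor:escapespeed} shows $E(t)^{\circ}$ is a $Ct^{3/8}$-domain, and under $(A-)$ the inclusion $U^{+}(t)\subset U(t)$ forces the same for the inner evolution $U(t)$. Proposition~\ref{pro:uniq2} then yields $\partial E(t)=\partial U(t)$. The flow $\Lambda^{+}(t)$ is used only to certify that the inner evolution contains the origin; it is not itself the limit object. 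Without a quantitative escape rate and a uniqueness theorem tailored to it, your barrier argument for part~(1) remains exactly the heuristic you yourself flagged as incomplete. A parallel remark applies to the connectedness of the outer evolution in part~(2): it does not follow from ``short-time continuity'' of the approximants, since the neck width degenerates as $\varepsilon\downarrow 0$; it too relies on the sub-solution of Lemma~\ref{lem:subsolution}.
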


\begin{thm}\label{thm:less90}
Let $\Gamma_0$ and $\gamma$ be defined by (\ref{eqn:initialcurveg}) and (\ref{eqn:contactangle}).

Then there exist $\alpha_n\in(0,\pi/2)$ ($n\geq2$) such that $\alpha_n\rightarrow \pi/2$, as $n\rightarrow\infty$ and if $0\leq\gamma<\alpha_n$, then there exists $T_{\gamma}$ such that the interface evolution $\Gamma(t)$ for (\ref{eq:cur}) with initial curve $\Gamma_0$ is not fattening for $0\leq t<T_{\gamma}$.
\end{thm}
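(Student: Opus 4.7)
The plan is to show non-fattening by sandwiching the level-set evolution $\Gamma(t)$ between two smooth classical evolutions of (\ref{eq:cur}) that can be made arbitrarily close to $\Gamma_0$ at $t=0$, and then invoking viscosity comparison. Near the origin the tangent cone of $\Gamma_0$ is the double cone $C_\gamma=\{(x,y)\mid |y|=|x|\tan\gamma\}$, and under the parabolic rescaling $(x,y,t)\mapsto(\lambda x,\lambda y,\lambda^{2}t)$ the driving force $A$ is of lower order as $\lambda\to 0$. The local model for the resolution of the singularity is therefore a \emph{self-similar expander} for pure mean curvature flow coming out of $C_\gamma$.

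The first step is to construct this expander. Writing an axisymmetric expander as $|y|=\sqrt{t}\,\bar w_\gamma(x/\sqrt{t})$, the profile $\bar w_\gamma$ must satisfy the ODE
\begin{equation*}
\frac{\bar w''}{1+(\bar w')^{2}}-\frac{n-1}{\bar w}+\tfrac12\bigl(\xi\bar w'-\bar w\bigr)=0,\qquad \xi>0,
\end{equation*}
with $\bar w(0)=0$, $\bar w'(0^+)=+\infty$ and $\bar w(\xi)/\xi\to\tan\gamma$ as $\xi\to\infty$. A shooting argument in the initial height parameter at the axis, combined with a phase-plane analysis in which the dissipative term $-(n-1)/\bar w$ becomes more effective as $n$ grows, yields the critical angle $\alpha_n\in(0,\pi/2)$ as the supremum of $\gamma$ for which the shooting closes up; monotonicity in $n$ together with a quantitative lower bound on the admissible slope at infinity will give $\alpha_n\to\pi/2$ as $n\to\infty$.

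With $\bar w_\gamma$ in hand, for each small $\varepsilon>0$ I would build an inner barrier $\Gamma^{+}_{\varepsilon}(t)$ and an outer barrier $\Gamma^{-}_{\varepsilon}(t)$ by gluing a rescaled expander $|y|=\sqrt{t+\varepsilon^{2}}\,\bar w_\gamma(x/\sqrt{t+\varepsilon^{2}})$, slightly shrunk (resp.\ enlarged) in the normal direction and translated by an $O(At)$ correction to absorb the driving force, to a smooth classical solution of (\ref{eq:cur}) starting from an $\varepsilon$-perturbation of $\Gamma_0\setminus B_\varepsilon(O)$. Both barriers are smooth on some interval $[0,T_\gamma]$, lie on opposite sides of $\Gamma_0$ at $t=0$, and converge uniformly to $\Gamma_0$ as $\varepsilon\to 0$. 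Viscosity comparison for the level set equation then traps $\Gamma(t)$ between $\Gamma^{+}_{\varepsilon}(t)$ and $\Gamma^{-}_{\varepsilon}(t)$ for each $\varepsilon>0$, and letting $\varepsilon\to 0$ forces the zero set of $\phi$ to have empty interior, i.e., non-fattening on $[0,T_\gamma)$.

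The main obstacle will be the self-similar ODE step: existence and asymptotic analysis of $\bar w_\gamma$ up to the sharp threshold $\alpha_n$, together with the quantitative behavior $\alpha_n\to\pi/2$ as $n\to\infty$. A secondary difficulty is the barrier construction itself, which requires a smooth glueing of the expander to a short-time classical solution near the axis of revolution, carried out uniformly in $\varepsilon$, while preserving the strict ordering with respect to $\Gamma_0$ needed for the comparison step and controlling the $O(At)$ correction so that it does not destroy this ordering.
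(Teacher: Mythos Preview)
The paper's argument is both simpler and conceptually different from yours. It does not use expanders at all: instead it writes down an explicit \emph{shrinking} self-similar super-solution of the horizontal graph equation, namely $\overline{w}(z)=C-\sqrt{\rho^2-z^2}$ in the rescaled variable $z=x/\sqrt{2(T-t)}$. A direct computation (Proposition~\ref{prop:supersolution}) shows this is a super-solution whenever $1<\rho<C$ and $C^2+\rho^2<n$ (for $n\geq 3$; a slightly more delicate constraint for $n=2$). The envelope of the corresponding one-parameter family in the original variables is the cone of slope $\sqrt{C^2-\rho^2}/\rho$, so one may take $\alpha_n=\arctan\sqrt{n-2}$ explicitly and, for any $\gamma<\alpha_n$, choose $C,\rho$ so that this envelope lies strictly above $u_0$ near the origin. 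Since the barrier $\overline{u}(\cdot,t;T)$ collapses to the origin at time $T$, comparison gives $O\notin E(T)$ for every small $T$, and hence the \emph{closed} evolution separates into two disjoint pieces $E^{\pm}(t)$. Each piece starts from an $\alpha$-domain with smooth boundary, so Theorem~\ref{thm:partialUmeancurvature} yields $\partial E^{\pm}(t)=\partial U^{\pm}(t)$ immediately; non-fattening follows with no shooting argument, no glueing, and no uniform-in-$\varepsilon$ estimates.

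Your proposal, by contrast, models the resolution of the singularity on a \emph{connected} expander coming out of the double cone and then tries to sandwich $\Gamma(t)$. There are two gaps. First, the boundary conditions $\bar w(0)=0$, $\bar w'(0^+)=+\infty$ pin the cap of your expander at the origin for all time; this neither desingularizes the cone nor matches the actual behavior, since for $\gamma<\alpha_n$ both the open and the closed evolutions separate. A one-sided expander for the right half would need $\bar w(\xi_0)=0$, $\bar w'(\xi_0^+)=+\infty$ at some $\xi_0>0$, and establishing its existence together with the asymptotics giving $\alpha_n\to\pi/2$ is a genuine ODE problem that the paper's algebraic barrier bypasses entirely. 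Second, the sandwich itself does not close: because $U$ has two components, any inner barrier must be disconnected, whereas a connected expander can serve only as an outer barrier; the two families are then topologically distinct near the origin and do not converge to each other for fixed $t>0$ as $\varepsilon\to 0$, so trapping $\Gamma(t)$ between them does not by itself force $\partial E(t)=\partial D(t)$. The decisive step in this regime is exactly to show that $E(t)$ pinches off at the origin, which is what the shrinking circular barrier accomplishes directly.
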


\begin{thm}\label{thm:less90in2dim}
Let $\Gamma_0$ and $\gamma$ be defined by (\ref{eqn:initialcurveg}) and (\ref{eqn:contactangle}).

Assume $0\leq\gamma<\pi/2$, for $n=1$.

The interface evolution $\Gamma(t)$ for (\ref{eq:cur}) in $\mathbb{R}^{2}$ with initial data $\Gamma_0$ is fattening.
\end{thm}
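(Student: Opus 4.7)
For $n=1$ and $\gamma<\pi/2$, the closed curve $\Gamma_0$ has a singular point at the origin where the interior $D_0=\{(x,y):|y|<u_0(x)\}$ decomposes into two open lobes $R_0\subset\{x>0\}$ and $L_0\subset\{x<0\}$ sharing only this point, with interior angle $2\gamma\in[0,\pi)$. For $\gamma>0$ the exterior also decomposes into two lobes sharing only the origin, each with interior opening angle $\pi-2\gamma\in(0,\pi)$. The plan is to show that near the origin, neither $\{\phi(\cdot,t)>0\}$ nor $\{\phi(\cdot,t)<0\}$ covers a two-dimensional neighborhood at small $t>0$, so that a two-dimensional subset lies in $\{\phi(\cdot,t)=0\}$, establishing fattening.

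The key phenomenon is the instantaneous retraction of convex corners under $V=-\kappa+A$: at a corner with interior angle in $(0,\pi)$ the formal curvature $+\infty$ dominates the bounded driving force $A$, so the boundary immediately recedes from the vertex by a strictly positive distance for any $t>0$. I would make this precise via two comparison steps. For the interior, take an $\epsilon$-smoothing $\Omega^\epsilon \supset \overline{R_0\cup L_0}$ whose smooth boundary has high curvature ($\sim 1/\epsilon$) over the narrow ``bridge'' at the origin, evolve it classically by $V=-\kappa+A$, and use it as an upper barrier for the level set equation. A careful analysis (passing $\epsilon\to 0$) yields $\{\phi(\cdot,t_0)>0\}\cap B_\rho(0)=\emptyset$ for some $t_0,\rho>0$. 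By an analogous construction applied to the exterior, $\{\phi(\cdot,t_0)<0\}\cap B_\rho(0)=\emptyset$. Combining, $B_\rho(0)\subset \{\phi(\cdot,t_0)=0\}$, and the zero set has nonempty interior.

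The main obstacles are (i) the quantitative control of the $\epsilon$-smoothing evolution (the bridge pinches in time $O(\epsilon^2)$, after which the flow continues in two separate components each with a rounded corner near the origin; one must show these corners continue to retract on a positive scale independent of $\epsilon$, which requires either explicit shrinking-disc barriers tangent to the $y$-axis or a self-similar analysis of the evolving convex corner) and (ii) the borderline case $\gamma=0$, where the exterior's corner degenerates to opening $\pi$ and does not retract by the above mechanism. In that case the fattening traces back to the degenerate critical point of $\phi_0$ at the origin, and one complements the interior-retraction argument with a direct sub-solution built from the parabolic graph evolution $u(x,t)$ of $u_t = u_{xx}/(1+u_x^2) + A\sqrt{1+u_x^2}$ with initial data $u_0$, using that $u(0,t)>0$ for $t>0$ even when $u_0(0)=u_0'(0)=0$ (by parabolic regularization of the V-shape at $x=0$) to produce the required lower bound for $\phi$ on a neighborhood near the $x$-axis.
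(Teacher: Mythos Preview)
Your overall target is correct and coincides with the paper's: show that the inner evolution $D(t)=\{\phi>0\}$ vacates a neighborhood of the origin while the outer evolution $E(t)=\{\phi\geq 0\}$ still covers it, so that $\Gamma(t)=E(t)\setminus D(t)$ has interior. But the mechanism you propose for the inner evolution fails precisely in dimension $n=1$.

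You take a smooth connected $\Omega^\epsilon\supset \overline{D_0}$ with a bridge of width $\sim\epsilon$ at the origin and assert that under $V=-\kappa+A$ the bridge pinches in time $O(\epsilon^2)$. In the plane this is false: at the waist of a neck the boundary curve is concave from inside, so $-\kappa>0$ there and the neck \emph{widens}. In terms of the horizontal graph equation for $n=1$,
\[
u_t=\frac{u_{xx}}{1+u_x^2}+A\sqrt{1+u_x^2},
\]
there is no $-(n-1)/u$ contraction term; at the waist $u_x=0$ and $u_{xx}\geq 0$, so $u_t\geq A>0$. This is exactly Lemma~\ref{lem:alphad2}: an $\alpha$-domain in $\mathbb{R}^2$ becomes an $(\alpha+At)$-domain. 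Hence the open evolution of $\Omega^\epsilon$ stays a connected tube containing the origin for every $\epsilon>0$, and letting $\epsilon\to 0$ you recover $E(t)^{\circ}$, not $D(t)$. Your upper barrier gives no information about $D(t)$ near the origin. The pinching intuition you invoke is an $n\geq 2$ phenomenon (the shrinking doughnut); importing it here is the gap.

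The paper bypasses this by exploiting that the open set $D_0$ is \emph{already} the disjoint union of two $\alpha$-domains $R_0,L_0$; by Lemma~\ref{lem:sep} the inner evolution equals $U^+(t)\cup U^-(t)$ as long as the two lobe evolutions stay disjoint, and one checks the left endpoint $a(t)>0$ directly via a family of circles tangent to a cone $|y|=x\tan\gamma_1$ with $\gamma_1\in(\gamma,\pi/2)$ (Proposition~\ref{prop:innerless}). For the outer evolution the paper uses, uniformly for all $\gamma<\pi/2$, precisely your $\gamma=0$ fallback: the $n=1$ graph equation forces $u(0,t)>0$, so $E(t)^{\circ}$ is an $At$-domain (Proposition~\ref{prop:outerless}). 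Your exterior corner-retraction argument for $\gamma>0$ is therefore an unnecessary detour, and the approach you reserved for the ``borderline'' case is in fact the main one.
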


The definitions of fattening, non-fattening, outer-evolution, inner-evolution and interface evolution are given in section 2.

Theorem \ref{thm:equal90} can be explained by Figure \ref{fig:exist} and Figure \ref{fig:fattening1}. $\varphi$ in Figure \ref{fig:exist} and Figure \ref{fig:fattening1} is the unique viscosity solution of 
$$
\left\{
\begin{array}{lcl}
\dis{\varphi_t=|\nabla \varphi|\textmd{div}(\frac{\nabla \varphi}{|\nabla \varphi|})+A|\nabla \varphi|}\ \textrm{in}\  \mathbb{R}^{n+1}\times(0,T),\\
\varphi(x,y,0)=a_2(x,y)\ \textrm{in}\  \mathbb{R}^{n+1}\times(0,T),
\end{array}
\right.
$$
where $a_2(x,y)$ satisfies $\Gamma_0=\{(x,y)\in\mathbb{R}^{n+1}\mid a_2(x,y)=0\}$ and $\{(x,y)\in\mathbb{R}^{n+1}\mid a_2(x,y)>0\}$ is bounded. Let $\Gamma(t)=\{(x,y)\in\mathbb{R}^{n+1}\mid \varphi(x,y,t)=0\}$.

\begin{figure}[htbp]
	\begin{center}
            \includegraphics[height=7.0cm]{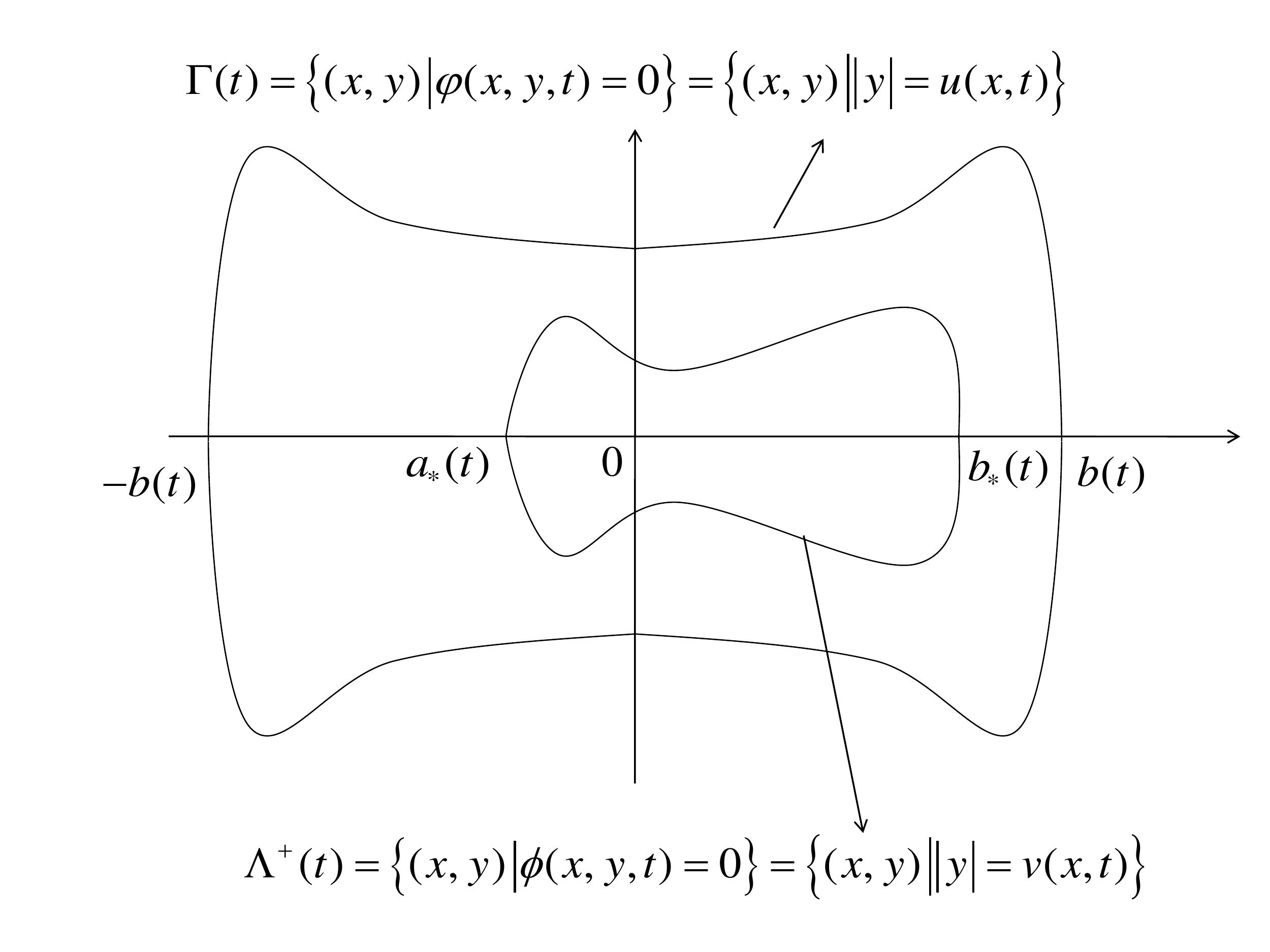}
		\vskip 0pt
		\caption{The case $a_*(t)<0$ in Theorem \ref{thm:equal90}}
        \label{fig:exist}
	\end{center}
\end{figure}

\begin{figure}[htbp]
	\begin{center}
            \includegraphics[height=7.0cm]{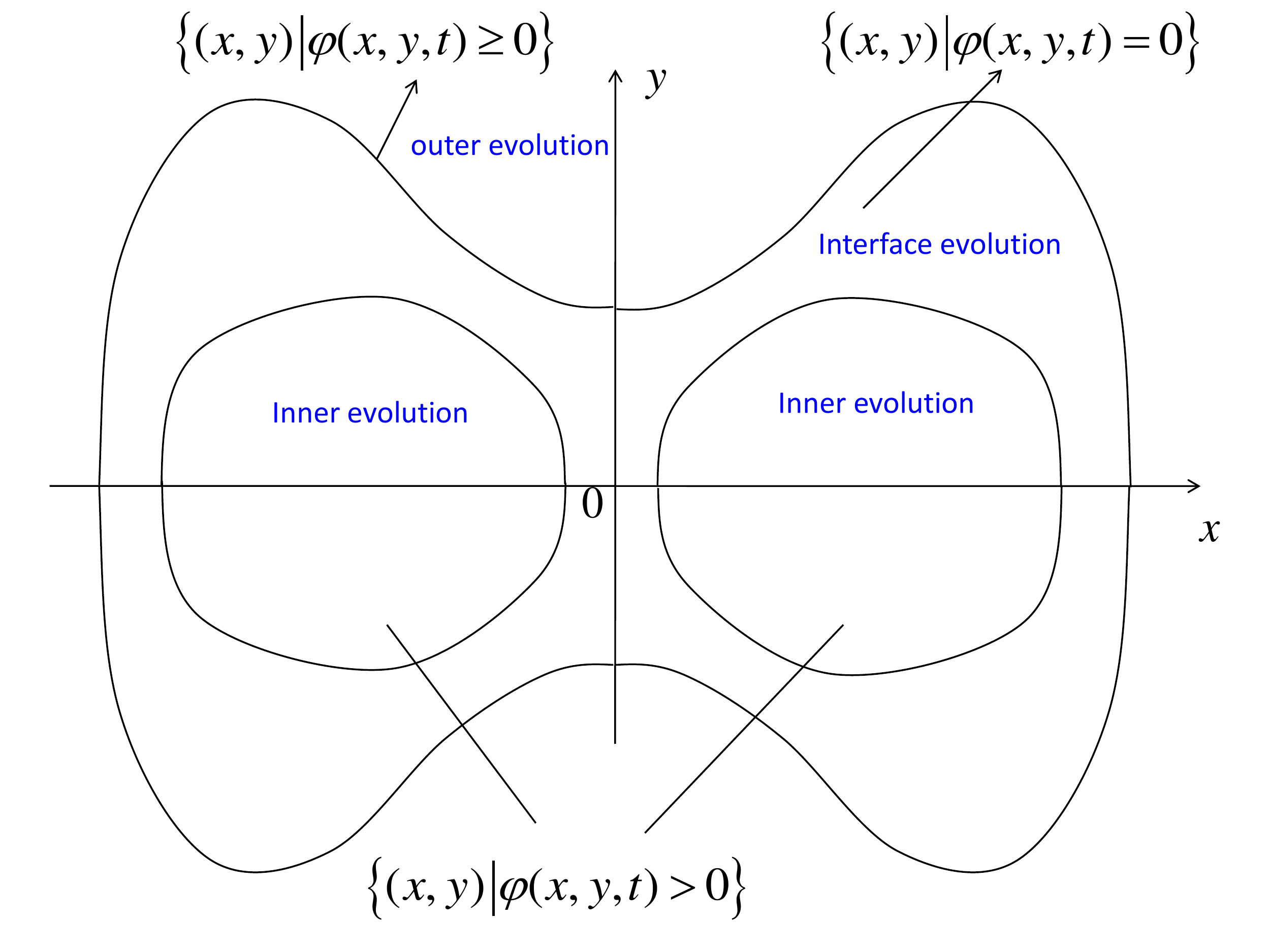}
		\vskip 0pt
		\caption{The case $a_*(t)\geq0$ in Theorem \ref{thm:equal90}}
        \label{fig:fattening1}
	\end{center}
\end{figure}

{\bf Motivation.} Recently, \cite{Z} has considered the mean curvature flow with driving force starting as singular initial curve in the plane. The author gets the same results as in Theorem \ref{thm:equal90} under the condition $n=1$ (see \cite{Z}). In this paper, we give some criteria to judge whether the interface evolution starting as singular hypersurface is fattening or non-fattening in higher dimension. Combining the results in \cite{Z}, we can conclude the results as the following tables. In the following tables, ``Connected'' means that the evolution is a connected set and ``Separated'' means that the evolution consists of two disjoint components.

\begin{table}[h]  
\caption{Singular angle $\gamma=\pi/2$} 
\begin{tabular}{p{3.5cm}|p{3.5cm}|p{3.5cm}}   
\hline  
\hline  

Assumption $(A+)$     & $n=1$ & $n\geq2$ \\  
 
\hline   

Outer evolution & Connected & Connected \\ 

\hline  

Inner evolution & Separated & Separated \\  

\hline  

Result & Fattening & Fattening \\  

\hline  
\hline

\end{tabular}  
\end{table}  

\begin{table}[h]  
\caption{Singular angle $\gamma=\pi/2$} 
\begin{tabular}{p{3.5cm}|p{3.5cm}|p{3.5cm}}   
\hline  
\hline  

Assumption $(A-)$     & $n=1$ & $n\geq2$ \\  
 
\hline   

Outer evolution & Connected & Connected \\ 

\hline  

Inner evolution & Connected & Connected \\  

\hline  

Result & Non-fattening & Non-fattening \\  

\hline  
\hline  
\end{tabular}  
\end{table}  

\begin{table}[h]  
\caption{Singular angle $\gamma<\pi/2$} 
\begin{tabular}{p{3.5cm}|p{3.5cm}|p{3.5cm}}   
\hline  
\hline  

    & $n=1$, $0\leq \gamma<\pi/2$ & $n\geq2$, $0\leq \gamma<\alpha_n$ \\  
 
\hline   

Outer evolution & Connected & Separated \\ 

\hline  

Inner evolution & Separated & Separated \\  

\hline  

Result & Fattening & Non-fattening \\  

\hline  
\hline

\end{tabular}  
\end{table}

{\bf The role of $a_*(t)$ in Theorem \ref{thm:equal90}.} In Section 5, we can prove that there exists a unique solution $(u,b)$  of the following free boundary problem 
\begin{equation}\label{eq:1graph}
u_t=\frac{u_{xx}}{1+u_x^2}-\frac{n-1}{u}+A\sqrt{1+u_x^2},\ -b(t)<x<b(t),\ 0<t<T_1,
\end{equation}
\begin{equation}\label{eq:1bounday}
u(b(t),t)=u(-b(t),t)=0,\ u_x(-b(t),t)=-u_x(b(t),t)=\infty,\ 0<t<T_1,
\end{equation}
\begin{equation}\label{eq:1initial}
u(x,0)=u_0(x),\ -b_0\leq x\leq b_0,
\end{equation}
\begin{equation}\label{eq:innerpositive}
u(x,t)>0,\ -b(t)< x< b(t),\ 0<t<T_1.
\end{equation}
Precisely, we say $(u,b)$ is the solution of (\ref{eq:1graph}), (\ref{eq:1bounday}), (\ref{eq:1initial}) and (\ref{eq:innerpositive}), if

(1) $b(t)$ is a positive function and $b\in C([0,T_1))\cap C^1((0,T_1))$.

(2) $u\in C(\overline{D}_{T_1})\cap C^{2,1}(D_{T_1})$, where $\overline{D}_{T_1}=\cup_{0\leq t<T_1}\big([-b(t),b(t)]\times\{t\}\big)$ and $D_{T_1}=\cup_{0<t<T_1}\big((-b(t),b(t))\times\{t\}\big)$ (We must note that $\overline{D}_{T_1}\neq\overline{D_{T_1}}$).

(3) $(u,b)$ satisfies (\ref{eq:1graph}), (\ref{eq:1bounday}), (\ref{eq:1initial}) and (\ref{eq:innerpositive}).

 Obviously, the flow $\Gamma^*(t)=\{(x,y)\mid |y|=u(x,t),\ -b(t)\leq x\leq b(t)\}$ satisfies (\ref{eq:cur}), (\ref{eq:initial1}) naturally.

Let $(v,a_*,b_*)$ be the solution of the problem (*). If the assumption $(A+)$ holds, the flow
$$
\Lambda^+(t)=\{(x,y)\mid|y|=v(x,t),\ a_*(t)\leq x\leq b_*(t)\}
$$ 
does not intersect the flow
$$
\Lambda^-(t)=\{(-x,y)\mid (x,y)\in \Lambda^+(t)\},
$$  
for $0<t<\delta$. Denote $\Lambda(t)=\Lambda^+(t)\cup\Lambda^-(t)$. Obviously, $\Lambda(t)$ also satisfies (\ref{eq:cur}). Seeing $\Gamma^*(0)=\Lambda(0)=\Gamma_0$, this means that there exist two types of flows $\Gamma^*(t)$ and $\Lambda(t)$ evolving by $V=-\kappa+A$ with the same initial curve $\Gamma_0$. Therefore under this condition, the solution of the original problem (\ref{eq:cur}), (\ref{eq:initial1}) is not unique. Indeed, from the proof of Theorem \ref{thm:equal90}, we see that the flow $\Gamma^*(t)$ is the boundary of the closed evolution and the flow $\Lambda(t)$ is the boundary of open evolution.

If $a_*(t)<0$ for $0<t<\delta$, $\Lambda^+(t)\cap\Lambda^-(t)\neq \emptyset$. Obviously, $\Lambda(t)=\Lambda^+(t)\cup\Lambda^-(t)$ does not satisfy (\ref{eq:cur}). But $\Lambda^+(t)$ plays the role of a sub-solution (in the proof of Lemma \ref{lem:closebou}). Using this sub-solution, the boundaries of the open evolution and closed evolution are away from the $x$-axis. Moreover, it can be proved that the derivatives and the second fundamental forms of them are uniformly bounded. By the uniqueness result  (Proposition \ref{pro:uniq2}), we can prove they are coincide. 

For classical mean curvature flow i.e. $A=0$ and under the condition $\gamma=\pi/2$, since $a_*(t)\geq0$ always holds, the interface evolution is fattening.  

{\bf Background.} 
In 1995, \cite{AAG} considered the classical mean curvature flow in dimension $n$, $n\geq2$. They proved that the singular formations for axisymmetric flow can only be shrinking or pinching. Moreover, they used level set method to show that after pinching, the interface evolution is non-fattening and separated into some disjoint connected components. Indeed, this result can be seen as a special condition $A=0$ and $\gamma=0$ in this paper. 
  
Mean curvature with driving force under the condition $\gamma=\pi/2$ and $n=1$, the curve in plane, has been considered in \cite{Z} recently. The same results as in Theorem \ref{thm:equal90} are given in \cite{Z}. In this paper, we give more general criteria to judge whether the interface evolution starting as singular initial hypersurface is fattening or non-fattening. 

{\bf A short review for mean curvature flow.} For the classical mean curvature flow: $A=0$ in (\ref{eq:cur}), there are many results. Concerning this problem, Huisken \cite{H} showed that any solution that starts out as a compact, smooth and convex surface preserves so until it shrinks to a "round point", its asymptotic shape is a sphere just before it disappears. He prove this result for hypersurfaces in $\mathbb{R}^{n+1}$ with $n\geq2$, while Gage and Hamilton \cite{GH} showed that it still holds when $n=1$, the curves in the plane. Gage and Hamilton also showed that an embedded curve remains embedded, i.e. the curve will not intersect itself. Grayson \cite{Gr} proved the remarkable fact that such family must become convex eventually. Thus, any embedded curve in the plane will shrink to "round point" under mean curvature flow. However in higher dimensions this is not true. Grayson \cite{Gr2} also showed that there exist smooth solutions that become singular before they shrink to a point. His example consists of a barbell: two spherical surfaces connected by a sufficiently thin "neck". In this example, the inward curvature of the neck is so large that it will force the neck to pinch before shrinking. In 1995, A. Altschuler, S. B. Angenent and Y. Giga \cite{AAG} studied the global-in-time solutions whose initial value is a compact, smooth, rotationally symmetric hypersurface given by rotating a graph around an axis by level set method. They proved the hypersurface will separate into two smooth hypersurfaces after pinching. Indeed, the condition in \cite{AAG} is the condition $A=0$ and $\gamma=0$ in this paper.

{\bf Main method.}  Since the initial hypersurface is singular at the origin, we use the level set method to study the problem. The level set method will be introduced in Section 2. For the level set method, \cite{CGG} gives the existence and uniqueness for the viscosity solution of the level set equation. However, in spite of the development of the level set method, it is still difficult to determine whether fattening occurs or not.

Another important tool is the intersection number principle. It was also used in \cite{AAG} that the intersection number between two families evolving by mean curvature flow is non-increase. But for the problem with driving force this number may increase in time. The intersection number between two flows evolving by $V=-\kappa+A$ has been investigated by \cite{Z}. We will give the references in Section 3.

The rest of this paper is organized as follows. In Section 2, we refer to the level set method by using viscosity solution which is established by \cite{CGG}. We also refer to the definitions of open evolution, closed evolution and the basic knowledge concluding comparison principle, monotone convergence theorem and so on. In Section 3, we give some preliminary results including interior gradient estimate, modifying intersection number principle and its application. In Section 4, we prove the main results. In Section 5, we give an example for the phenomenon ``second fattening''. In Section 6, we give the local existence and uniqueness of the problem (\ref{eq:cur}) with some initial hypersurfaces.


\section{Viscosity solution and level set method}

Since the initial curve $\Gamma_0$ has singularity at $(0,0)$, the equation $V=-\kappa+A$ does not make sense at $t=0$. Therefore, we apply the level set method to our problem. In this section, we refer to the level set method in $\mathbb{R}^N$. Let $\Gamma(t)$ be a smooth family of smooth, closed, compact hypersurfaces in $\mathbb{R}^{N}$ given by $\Gamma(t)=\{x|\psi(x,t)=0,x\in\mathbb{R}^N\}$ for some $\psi$ and $\{x\mid \psi(x,t)>0\}$ is bounded. If $\Gamma(t)$ evolves by (\ref{eq:cur}), we can see that $\psi(x,t)$ satisfies 
$$
\dis{\psi_t=|\nabla \psi |\textmd{div}(\frac{\nabla \psi}{|\nabla \psi|})+A|\nabla \psi|\ \textrm{on}\  \{(x,t)\mid\psi(x,t)=0\}}.
$$
Next we consider the equation in whole space
\begin{equation}\label{eq:level}
\dis{\psi_t=|\nabla \psi |\textmd{div}(\frac{\nabla \psi}{|\nabla \psi|})+A|\nabla \psi|\ \textrm{in}\  \mathbb{R}^N\times(0,T)}.
\end{equation}
Equation (\ref{eq:level}) is called the level set equation of (\ref{eq:cur}). Theorem 4.3.1 in \cite{G} gives the existence and uniqueness of the viscosity solution for (\ref{eq:level}) with $\psi(x,0)=\psi_0(x)$. Here $\psi_0(x)$ is a bounded and uniform continuous function.
\begin{figure}[htbp]
	\begin{center}
            \includegraphics[height=5cm]{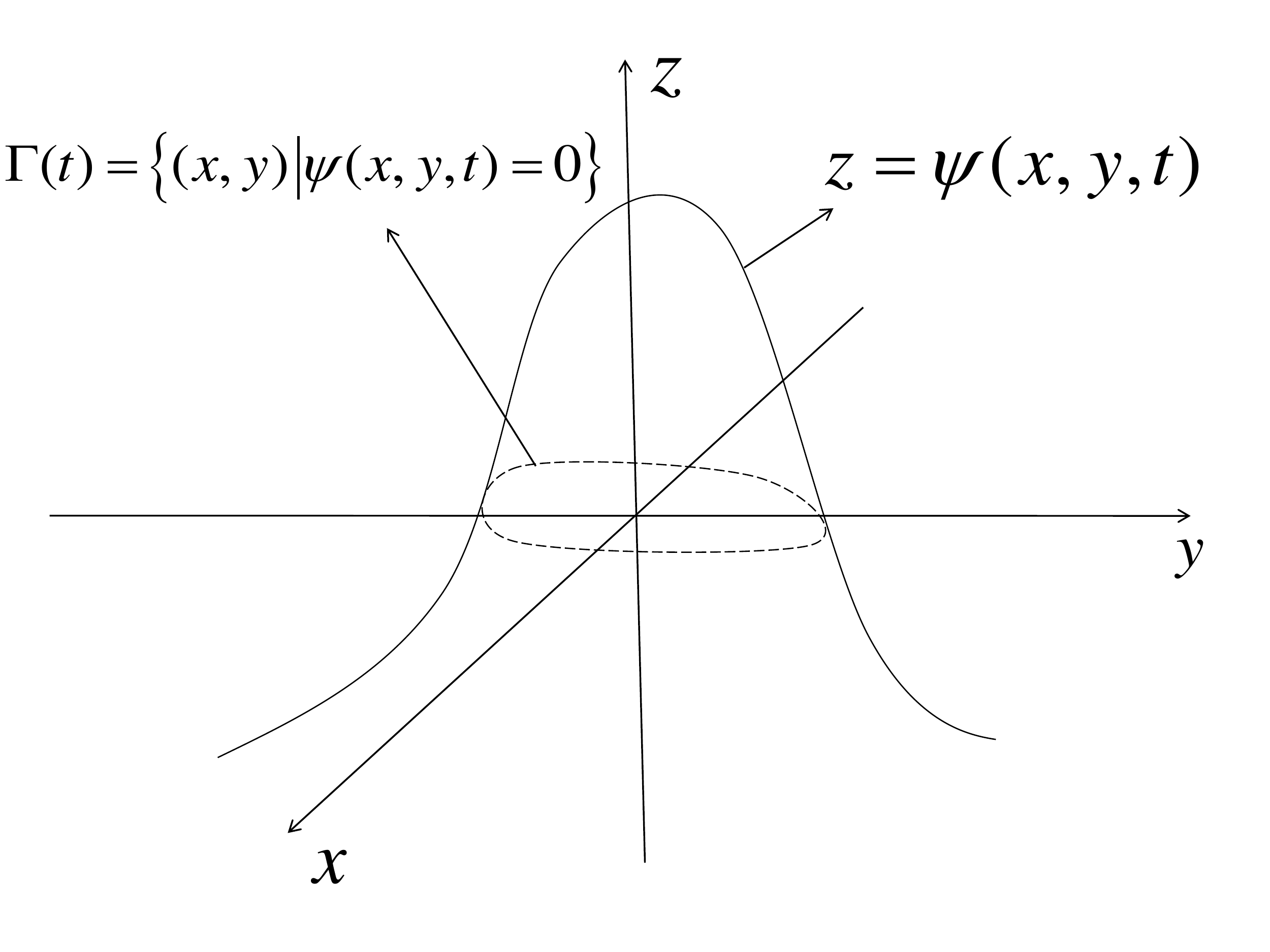}
		\vskip 0pt
		\caption{Level set method in $\mathbb{R}^2$}
        \label{fig:Levelsetmethod}
	\end{center}
\end{figure}

{\bf Level set method.} Using the solution of level set equation, we introduce the level set method. 
\begin{defn}\label{def:evo} (1) Let $D_0$ be a bounded open set in $\mathbb{R}^N$. A family of  open sets $\{D(t)\}_{0<t<T}$ in $\mathbb{R}^N$ is called an $(generlized)$ ``$open$'' or ``$inner$'' $evolution$ of (\ref{eq:cur}) with initial data $D_0$ if there exists a viscosity solution $\psi$ of (\ref{eq:level}) that satisfies
$$
D(t)=\{x\in\mathbb{R}^N \mid \psi(x,t)>0\},\ D_0=\{x\in\mathbb{R}^N\mid \psi(x,0)>0\}.
$$

(2) Let $E_0$ be a bounded closed set in $\mathbb{R}^N$. A family of closed sets $\{E(t)\}_{0<t<T}$ in $\mathbb{R}^N$ is called a $(generlized)$ ``$closed$'' or ``$outer$'' $evolution$ of (\ref{eq:cur}) with initial data $E_0$ if there exists a viscosity solution $\psi$ of  (\ref{eq:level}) that satisfies
$$
E(t)=\{x\in\mathbb{R}^N \mid \psi(x,t)\geq0\},\ E_0=\{x\in\mathbb{R}^N\mid \psi(x,0)\geq0\}.
$$

The set $\Gamma(t)=E(t)\setminus D(t)$ is called an \textit{(generalized) interface evolution} of  (\ref{eq:cur}) with initial data $\Gamma_0=E_0\setminus D_0$.
\end{defn}

\begin{rem}\label{rem:lev} (1) For an open set $D_0$ and $E_0=\overline{D_0}$, we often choose
$$
\psi(x,0)=\max\{\textrm{sd}(x,\partial D_0),-1\}
$$
where
$$
\textrm{sd}(x,\partial D_0)=\left\{
\begin{array}{lcl}
\textrm{dist}(x,\partial D_0), \ x\in D_0,\\
-\textrm{dist}(x,\partial D_0), \ x\notin D_0.
\end{array}\right.
$$

(2) Although the choice of $\psi(x,0)$ is not unique, Theorem 4.2.8 in \cite{G} implies that both the open evolution $D(t)$ and closed evolution $E(t)$ are independent of the choice of $\psi(x,0)$.

(3) In general, even if $E_0=\overline{D_0}$, we can not guarantee $E(t)=\overline{D(t)}$. If $E(t)\setminus D(t)$ has interior points for some $t$, we call the interface evolution is fattening. Respectively, if $E(t)=\overline{D(t)}$ for all $0<t<T$, we say the interface evolution is not fattening. Therefore, for proving non-fattening, it is sufficient to show $\partial D(t)=\partial E(t)$. For proving fattening, it is sufficient to show that there exists a ball $B$ such that $B\subset E(t)\setminus D(t)$.

(4) If $D_0$ and $\overline{D_0}$ are symmetric to $x_i$-axis, then it is also true for $D(t)$ and $E(t)$. Since level set equation (\ref{eq:level}) is invariance under orthogonal transformation.
\end{rem}

We now list some fundamental properties of open evolution and closed evolution of  (\ref{eq:cur}). (All the results listed below can be found in Chapter 4 of \cite{G})

\begin{thm}\label{thm:semi}
(Semigroups \cite{G}). Let $N(t)$ and $M(t)$ be the operators such that
$N(t)D_0=D(t)$ and $M(t)E_0=E(t)$ for $t>0$, where \{D(t)\}, \{E(t)\} are the open evolution and closed evolution with initial data $D_0$ and $E_0$,respectively. Then we have $N(t)D(s)=D(t+s)$ and $M(t)E(s)=E(t+s)$, for any $t>0$, $s>0$. 
\end{thm}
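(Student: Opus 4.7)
\smallskip

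\noindent\textbf{Proof proposal.} The plan is to deduce the semigroup property directly from (i) the time-translation invariance of the level set equation (\ref{eq:level}), (ii) the uniqueness of viscosity solutions, and (iii) the fact (Remark \ref{rem:lev}(2)) that the open and closed evolutions depend only on the positive/nonnegative super-level set of the initial datum, not on the particular choice of level set function.

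First I would fix $D_{0}$, choose a bounded uniformly continuous $\psi_{0}$ with $\{\psi_{0}>0\}=D_{0}$, and let $\psi$ be the unique viscosity solution of (\ref{eq:level}) with $\psi(\cdot,0)=\psi_{0}$. By definition, $D(t)=\{\psi(\cdot,t)>0\}$ for all $t\in(0,T)$, so in particular $D(s)=\{\psi(\cdot,s)>0\}$. Now define $\tilde{\psi}(x,t):=\psi(x,t+s)$; since (\ref{eq:level}) is autonomous in $t$, $\tilde{\psi}$ is itself a viscosity solution of (\ref{eq:level}) on $\mathbb{R}^{N}\times(0,T-s)$ with initial datum $\tilde{\psi}(\cdot,0)=\psi(\cdot,s)$. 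The function $\psi(\cdot,s)$ is bounded and uniformly continuous as a time slice of a viscosity solution (this is part of the well-posedness statement in Theorem 4.3.1 of \cite{G}), and it satisfies $\{\psi(\cdot,s)>0\}=D(s)$. Hence by Remark \ref{rem:lev}(2) the open evolution starting from $D(s)$ may be computed using $\tilde{\psi}$, giving
\[
N(t)D(s)=\{\tilde{\psi}(\cdot,t)>0\}=\{\psi(\cdot,t+s)>0\}=D(t+s).
\]

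The argument for the closed evolution is identical word for word with ``$>0$'' replaced by ``$\geq 0$'' everywhere: using the same $\psi$, the slice $\psi(\cdot,s)$ serves as an admissible initial level set function for $E(s)$, and $\tilde{\psi}(x,t)=\psi(x,t+s)$ computes the closed evolution issuing from it, yielding $M(t)E(s)=E(t+s)$.

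I expect the proof itself to be almost formal once the two background facts are invoked; the only point that requires care is verifying that $\psi(\cdot,s)$ is an admissible initial datum (bounded and uniformly continuous) so that the uniqueness theorem applies to $\tilde{\psi}$, and that the representation of $N(t)D(s)$, $M(t)E(s)$ via $\tilde{\psi}$ is legitimate thanks to the independence of the evolutions from the particular level set representation. Both points are already built into the well-posedness framework of \cite{G} that has been cited, so there is no genuine obstacle beyond correctly invoking them.
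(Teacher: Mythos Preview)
Your proposal is correct and is exactly the standard argument for this semigroup property. Note, however, that the paper does not supply its own proof of this statement: it is listed among the ``fundamental properties of open evolution and closed evolution'' with the blanket remark that ``all the results listed below can be found in Chapter 4 of \cite{G}'', so there is no paper-side proof to compare against beyond the citation.
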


\begin{thm}\label{thm:order}(Order preserving property \cite{G}). Let $D_0$, $D_0^{\prime}$ be two open sets in $\mathbb{R}^N$ and let $E_0$, $E_0^{\prime}$ be two closed sets in $\mathbb{R}^N$ and let $N(t)$, $M(t)$ be the operators defined in Theorem \ref{thm:semi}. Then

(1) $N(t)D_0\subset U(t)D_0^{\prime}$, if $D_0\subset D_0^{\prime}$;

(2) $M(t)E_0\subset M(t)E_0^{\prime}$, if $E_0\subset E_0^{\prime}$;

(3) $N(t)D_0\subset M(t)E_0^{\prime}$, if $D_0\subset E_0^{\prime}$;

(4) $E_0\subset D_0$ and $\textrm{dist}(E_0,\partial D_0)>0$, then $M(t)E_0\subset N(t)D_0$.
\end{thm}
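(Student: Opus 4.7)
The plan is to derive all four inclusions from one mechanism: the comparison principle for viscosity solutions of (\ref{eq:level}), combined with Theorem 4.2.8 of \cite{G}, which asserts that $N(t)D_0$ and $M(t)E_0$ are independent of the choice of continuous bounded initial data with the prescribed zero-set structure. A second ingredient is that (\ref{eq:level}) is invariant under $\psi\mapsto\psi+c$, so any solution may be shifted by a constant without altering its level-set geometry.

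For (1), I would take $\psi_0,\psi_0'$ with $\{\psi_0>0\}=D_0$ and $\{\psi_0'>0\}=D_0'$, and set $\tilde\psi_0:=\min(\psi_0,\psi_0')$. Since $D_0\subset D_0'$, one has $\{\tilde\psi_0>0\}=D_0\cap D_0'=D_0$, so $\tilde\psi_0$ is still admissible for $D_0$; the comparison principle applied to $\tilde\psi_0\leq\psi_0'$ gives $\tilde\psi(\cdot,t)\leq\psi'(\cdot,t)$, hence $N(t)D_0\subset N(t)D_0'$ after invoking independence of initial data. Part (2) is dual: take $\tilde\psi_0':=\max(\psi_0,\psi_0')$, which is admissible for $E_0'$, and compare. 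For (3), use that an open subset $D_0$ of a closed set $E_0'$ automatically lies in $\textrm{int}(E_0')$; pick $\psi_0'$ admissible for $E_0'$ and strictly positive on $\textrm{int}(E_0')$ (a truncated signed distance does this), so $\tilde\psi_0:=\min(\psi_0,\psi_0')$ satisfies $\{\tilde\psi_0>0\}=D_0\cap\textrm{int}(E_0')=D_0$, and the same comparison argument yields $N(t)D_0\subset M(t)E_0'$.

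Part (4) is subtler because comparison only produces inclusions of non-strict sublevel sets, whereas we need $\{\tilde\psi(\cdot,t)\geq 0\}\subset\{\psi'(\cdot,t)>0\}$. I would exploit the positive-distance hypothesis together with compactness of $E_0$ to manufacture a quantitative gap. Fix $\psi_0'$ admissible for $D_0$; since $\psi_0'>0$ on $D_0\supset E_0$ and $E_0$ is compact, $c_0:=\min_{E_0}\psi_0'$ is strictly positive. Pick any $\psi_0$ admissible for $E_0$ and put $\tilde\psi_0:=\min(\psi_0,\psi_0'-c_0/2)$. Then $\{\tilde\psi_0\geq 0\}=\{\psi_0\geq 0\}\cap\{\psi_0'\geq c_0/2\}=E_0\cap\{\psi_0'\geq c_0/2\}=E_0$ by construction of $c_0$. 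Since $\psi_0'-c_0/2$ is itself an initial datum whose evolution is $\psi'(\cdot,t)-c_0/2$ (by invariance of (\ref{eq:level}) under additive constants), comparison gives $\tilde\psi(\cdot,t)\leq\psi'(\cdot,t)-c_0/2$, and therefore $M(t)E_0=\{\tilde\psi(\cdot,t)\geq 0\}\subset\{\psi'(\cdot,t)\geq c_0/2\}\subset\{\psi'(\cdot,t)>0\}=N(t)D_0$.

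The main obstacle is (4): comparison alone only yields non-strict inclusions, and the argument must engineer a uniform strict gap between the two initial data. The hypothesis $\textrm{dist}(E_0,\partial D_0)>0$ combined with compactness of $E_0$ supplies the constant $c_0$, and translation invariance of (\ref{eq:level}) transports that gap to all positive times. Parts (1)--(3) by contrast require nothing beyond comparison and the freedom to replace initial data by $\min$ or $\max$ of two admissible choices.
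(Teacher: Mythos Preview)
The paper does not prove this theorem; it is quoted from \cite{G} (Chapter 4) as a known result, with no argument given. Your proposal is a correct proof and follows exactly the standard mechanism one finds in \cite{G}: comparison principle for viscosity solutions of (\ref{eq:level}), together with independence of the evolutions from the choice of initial datum and the invariance of (\ref{eq:level}) under $\psi\mapsto\psi+c$. Two minor comments: in (3) you need not only $\psi_0'>0$ on $\mathrm{int}(E_0')$ but $\{\psi_0'>0\}=\mathrm{int}(E_0')$, which the truncated signed distance indeed gives; and in (4) the distance hypothesis already yields $c_0$ directly if you choose $\psi_0'$ to be the truncated signed distance to $\partial D_0$ (so $\psi_0'\geq\mathrm{dist}(E_0,\partial D_0)$ on $E_0$), making the separate appeal to compactness unnecessary, though in this paper's setting $E_0$ is bounded by Definition~\ref{def:evo} anyway.
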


\begin{thm}\label{thm:mon}(Monotone convergence \cite{G}).

(1) Let $D(t)$ and $\{D_j(t)\}$ be open evolutions with initial data $D_0$ and $D_{j0}$ respectively. If $D_{j0}\uparrow D_0$, then $D_j(t)\uparrow D(t)$ for $t>0$, i.e., $\bigcup\limits_{j\geq1}D_j(t)=D(t)$;

(2) Let $E(t)$ and $\{E_j(t)\}$ be closed evolutions with initial data $E_0$ and $E_{j0}$ respectively. If $E_{j0}\downarrow E_0$, then $E_j(t)\downarrow E(t)$ for $t>0$, i.e., $\bigcap\limits_{j\geq1}E_j(t)=E(t)$.
\end{thm}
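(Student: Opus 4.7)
The plan is to combine the order-preserving property (Theorem \ref{thm:order}) with a stability-by-comparison argument for viscosity solutions of the level set equation (\ref{eq:level}). I would prove part (1) in detail; part (2) follows by the symmetric argument with the inequalities reversed.

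One inclusion in part (1) is immediate: since $D_{j,0}\subset D_{j+1,0}\subset D_0$, Theorem \ref{thm:order}(1) gives $D_j(t)\subset D_{j+1}(t)\subset D(t)$, so $\bigcup_j D_j(t)\subset D(t)$. For the reverse inclusion I would exploit the fact (Remark \ref{rem:lev}(2)) that $D(t)$ is independent of the choice of initial function representing $D_0$, and choose the truncated signed distances
$$
\psi_0(x)=\max\{-1,\min\{1,\textrm{sd}(x,\partial D_0)\}\},\qquad \psi_{j,0}(x)=\max\{-1,\min\{1,\textrm{sd}(x,\partial D_{j,0})\}\},
$$
with $\textrm{sd}$ as in Remark \ref{rem:lev}(1). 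Each function is bounded and Lipschitz and has the correct positivity set. A short distance-comparison, using $D_{j,0}\subset D_{j+1,0}\subset D_0$ and the observation that any segment from a point of $D_{j,0}$ to a point of $D_0^c$ must cross $\partial D_{j,0}$, yields $\psi_{j,0}\le \psi_{j+1,0}\le \psi_0$ pointwise, and a parallel argument, based on the fact that every point of $D_0$ eventually lies in some $D_{k,0}$, gives $\psi_{j,0}\to\psi_0$ pointwise. Since $D_0$ is bounded, $\psi_{j,0}\equiv\psi_0\equiv -1$ outside a fixed large ball, so Dini's theorem upgrades this to uniform convergence on $\mathbb{R}^N$.

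Next I would apply the comparison principle for viscosity solutions of (\ref{eq:level}); because the equation is geometric and invariant under adding constants to $\psi$, comparison delivers $\psi_j\le\psi_{j+1}\le\psi$ on $\mathbb{R}^N\times[0,T)$ together with
$$
\sup_{\mathbb{R}^N}|\psi_j(\cdot,t)-\psi(\cdot,t)|\le \sup_{\mathbb{R}^N}|\psi_{j,0}-\psi_0|\longrightarrow 0.
$$
Now fix $t>0$ and $x\in D(t)$, so $\psi(x,t)>0$; uniform convergence forces $\psi_j(x,t)>0$ for all sufficiently large $j$, whence $x\in D_j(t)$. This gives $D(t)\subset\bigcup_j D_j(t)$ and completes part (1).

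For part (2), with $E_{j,0}\downarrow E_0$, I would take $\psi_{j,0}(x)=\max\{-1,\min\{1,\textrm{sd}(x,\partial E_{j,0})\}\}$ so that $\{\psi_{j,0}\ge 0\}=E_{j,0}$, verify $\psi_{j,0}\downarrow\psi_0$ uniformly by the analogous truncated-distance argument for $E_{j,0}\supset E_{j+1,0}\supset E_0$, and use comparison together with the strict inequality $\psi(x,t)<0$ for $x\notin E(t)$ to conclude $\bigcap_j E_j(t)\subset E(t)$; the reverse inclusion is Theorem \ref{thm:order}(2). The main obstacle of this plan is verifying the monotone uniform convergence of the chosen initial data; once that is in hand, stability of viscosity solutions under uniform limits finishes everything. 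An alternative, bypassing the explicit construction, would be to invoke the general stability of viscosity solutions under monotone upper/lower semicontinuous envelopes, but the truncated-distance approach is more transparent in our bounded setting.
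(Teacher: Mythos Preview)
The paper does not supply its own proof of this statement: Theorem~\ref{thm:mon} is quoted from \cite{G} (see the remark preceding Theorem~\ref{thm:semi} that ``all the results listed below can be found in Chapter~4 of \cite{G}''), so there is no in-paper argument to compare against. Your proposal is a correct and standard route to the result. The monotonicity $\psi_{j,0}\le\psi_{j+1,0}\le\psi_0$ of truncated signed distances indeed follows from $D_{j,0}\subset D_{j+1,0}\subset D_0$, pointwise convergence $\psi_{j,0}\to\psi_0$ follows from $\bigcup_j D_{j,0}=D_0$ together with elementary distance identities, and Dini on a large ball (outside of which all the functions equal $-1$) yields uniform convergence; the constant-shift invariance of the geometric equation~(\ref{eq:level}) then converts comparison into the $L^\infty$ stability estimate you wrote, from which both inclusions drop out. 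The argument for part~(2) is symmetric once you note that $E_{j,0}\downarrow E_0$ forces every $x\notin E_0$ to leave $E_{j,0}$ eventually, so the pointwise limit again holds on all of $\mathbb{R}^N$. In short, your approach is sound and is essentially how the result is established in the level-set literature; no gap to report.
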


\begin{rem}\label{rem:ori} Let $\{D_1(t)\}$ and $\{D_2(t)\}$ be the open evolution of $V=-\kappa+A$.  For $A>0$, even if $D_1(0)$ and $D_2(0)$ are disjoint, $D_1(t)$ and $D_2(t)$ may have intersections. The basic reason is that the level set equation (\ref{eq:level}) is not orientation free (It is not true that $-u$ is also a solution for (\ref{eq:level}) when $u$ is a solution for (\ref{eq:level})). 
\end{rem}

\begin{thm}\label{thm:sep}(Separate)
Let $\{D_1(t)\}_{0\leq t<T}$ be the open evolution of $V=-\kappa+A$ and $\{D_2(t)\}_{0\leq t<T}$ be the open evolution of $V=-\kappa-A$. If $D_1(0)\cap D_2(0)=\emptyset$, then $D_1(t)\cap D_2(t)=\emptyset$ for $0\leq t<T$. 
\end{thm}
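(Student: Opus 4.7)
The plan is to exploit the sign-reversal duality between the two level set equations. A direct computation shows that if $\psi$ is a viscosity solution of $\psi_t = |\nabla\psi|\,\textrm{div}(\nabla\psi/|\nabla\psi|) - A|\nabla\psi|$, then $\tilde\psi := -\psi$ is a viscosity solution of $\tilde\psi_t = |\nabla\tilde\psi|\,\textrm{div}(\nabla\tilde\psi/|\nabla\tilde\psi|) + A|\nabla\tilde\psi|$, because the unit normal $\nabla\psi/|\nabla\psi|$ (and hence its divergence) flips sign under $\psi \mapsto -\psi$ while $|\nabla\psi|$ is unchanged. Taking $\psi$ to be a level-set representative of $D_2$ under $V=-\kappa-A$, the function $\tilde\psi$ is a level-set representative under $V=-\kappa+A$ with $D_2(t) = \{\tilde\psi(\cdot,t) < 0\}$; this places both evolutions inside the same PDE, where viscosity comparison applies.

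I would first handle the strict separation case $\overline{D_1(0)} \cap \overline{D_2(0)} = \emptyset$, so that $d_0 := \textrm{dist}(\overline{D_1(0)}, \overline{D_2(0)}) > 0$. Pick the truncated signed-distance initial data
$$\phi_0(x) = \max\{-d_0/2,\ \min\{d_0/2,\ s_1(x)\}\}, \qquad \tilde\psi_0(x) = \max\{-d_0/2,\ \min\{d_0/2,\ -s_2(x)\}\},$$
where $s_i$ denotes the signed distance to $\partial D_i(0)$, positive inside $D_i(0)$. These are bounded and uniformly continuous, with $\{\phi_0 > 0\} = D_1(0)$ and $\{\tilde\psi_0 < 0\} = D_2(0)$. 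A three-case check using the cap shows $\phi_0 \leq \tilde\psi_0$ pointwise on $\mathbb{R}^N$: on $\overline{D_1(0)}$ the cap forces $\tilde\psi_0 = d_0/2 \geq \phi_0$; on $\overline{D_2(0)}$ it forces $\phi_0 = -d_0/2 \leq \tilde\psi_0$; and elsewhere $\phi_0 \leq 0 \leq \tilde\psi_0$. Letting $\phi, \tilde\psi$ denote the corresponding viscosity solutions of the $V=-\kappa+A$ level set equation, the comparison principle (Theorem 4.3.1 in \cite{G}) yields $\phi(\cdot,t) \leq \tilde\psi(\cdot,t)$ for all $t$, so $D_1(t) = \{\phi > 0\}$ is disjoint from $\{\tilde\psi < 0\} = D_2(t)$.

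For the general case where the boundaries may touch, I would reduce to the separated case by shrinking. For $\epsilon > 0$ set $D_i^\epsilon := \{x \in D_i(0) : \textrm{dist}(x, \partial D_i(0)) > \epsilon\}$. Then $\overline{D_i^\epsilon} \subset D_i(0)$, so $\overline{D_1^\epsilon} \cap \overline{D_2^\epsilon} \subset D_1(0) \cap D_2(0) = \emptyset$, and $D_i^\epsilon \uparrow D_i(0)$ as $\epsilon \downarrow 0$. Applying the separated case to each pair $(D_1^\epsilon, D_2^\epsilon)$ gives $D_1^\epsilon(t) \cap D_2^\epsilon(t) = \emptyset$ for every $\epsilon > 0$. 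Monotone convergence (Theorem \ref{thm:mon}, applied to each of the two level set equations) yields $D_i^\epsilon(t) \uparrow D_i(t)$, and the nested monotonicity of the $D_i^\epsilon(t)$ in $\epsilon$ closes the argument. The main obstacle is precisely this shared-boundary case: the ordered pair $\phi_0 \leq \tilde\psi_0$ cannot be built from signed distances when $\overline{D_1(0)}$ and $\overline{D_2(0)}$ meet, so the shrinking plus monotone convergence layer is indispensable; the duality observation in the first step is what makes the whole approach possible.
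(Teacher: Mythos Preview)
Your proposal is correct and is precisely the approach the paper has in mind: the paper omits the proof and points to Theorem~3.5 in \cite{AAG}, whose argument is exactly the sign-reversal duality $\psi\mapsto-\psi$ (turning the $-A$ level set equation into the $+A$ one) followed by viscosity comparison, with the touching-boundary case handled by shrinking and monotone convergence. Your write-up fills in the details that the paper leaves implicit.
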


The proof of this theorem is similar to that of Theorem 3.5 in \cite{AAG}. We omit it.

 In order to prove the fattening result, we need the following lemma. 

\begin{lem}\label{lem:sep}(Lemma 2.8 in \cite{Z}) Assume that $D_1(t)$ and $D_2(t)$ are the open evolution of (\ref{eq:level}) with $D_1(0)=U_1$ and $D_2(0)=U_2$, respectively. And $D(t)$ is denoted as the open evolution of (\ref{eq:level}) with $D(0)=U_1\cup U_2$. If $D_1(t)\cap D_2(t)=\emptyset$ for $0\leq t\leq T$, then  $D(t)=D_1(t)\cup D_2(t)$ for $0\leq t\leq T$.
\end{lem}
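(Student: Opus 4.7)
The plan is to prove the two inclusions $D_1(t)\cup D_2(t)\subset D(t)$ and $D(t)\subset D_1(t)\cup D_2(t)$ separately. The first follows at once from the order-preserving property (Theorem \ref{thm:order}(1)): since $U_i\subset U_1\cup U_2$, we have $D_i(t)\subset D(t)$ for $i=1,2$, whence the union is contained in $D(t)$.

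For the reverse inclusion I would argue by inner approximation. Put $U_i^\varepsilon:=\{x\in U_i\mid \mathrm{dist}(x,\partial U_i)>\varepsilon\}$; then $U_i^\varepsilon\uparrow U_i$ as $\varepsilon\downarrow 0$, and the hypothesis $U_1\cap U_2=\emptyset$ forces $\overline{U_1^\varepsilon}\cap\overline{U_2^\varepsilon}=\emptyset$: a common point would have to lie in $\overline{U_1}\cap\overline{U_2}\subset\partial U_1\cup\partial U_2$ and yet be at distance $\ge\varepsilon$ from both $\partial U_1$ and $\partial U_2$, a contradiction. Writing $D^\varepsilon(t)$ and $D_i^\varepsilon(t)$ for the open evolutions of $U_1^\varepsilon\cup U_2^\varepsilon$ and $U_i^\varepsilon$, Theorem \ref{thm:mon}(1) gives $D^\varepsilon(t)\uparrow D(t)$ and $D_i^\varepsilon(t)\uparrow D_i(t)$, so it suffices to show $D^\varepsilon(t)\subset D_1^\varepsilon(t)\cup D_2^\varepsilon(t)$ for each fixed $\varepsilon>0$. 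In other words, we may assume from the outset that $\overline{U_1}\cap\overline{U_2}=\emptyset$.

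In this reduced setting, choose $\psi_0^i(x):=\max(\mathrm{sd}(x,\partial U_i),-1)$ and let $\psi^i$ be the viscosity solution of the level set equation with this initial datum, so that $D_i(t)=\{\psi^i(\cdot,t)>0\}$. The function $\tilde\psi:=\max(\psi^1,\psi^2)$ has initial positivity set $U_1\cup U_2$; hence by Remark \ref{rem:lev}(2), $D(t)=\{\psi(\cdot,t)>0\}$ where $\psi$ is the viscosity solution starting from $\tilde\psi(\cdot,0)$. Now $\tilde\psi$ is a viscosity subsolution (maximum of two subsolutions), so comparison with $\psi$ gives $\tilde\psi\le\psi$, reproving only the easy inclusion. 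The crux is to show that $\tilde\psi$ is also a supersolution, hence $\tilde\psi=\psi$ by uniqueness. At any point $(x_0,t_0)$ with $\tilde\psi(x_0,t_0)>0$, the disjointness hypothesis $D_1(t)\cap D_2(t)=\emptyset$ forces exactly one of $\psi^1(x_0,t_0),\psi^2(x_0,t_0)$ to be positive, and continuity makes $\tilde\psi$ agree on a neighborhood with that one, so it inherits the solution property locally. The remaining case $\tilde\psi(x_0,t_0)\le 0$ is the main technical obstacle: the supersolution inequality has to be extracted from those of $\psi^1,\psi^2$ by splitting into the subcases $\psi^1>\psi^2$, $\psi^1<\psi^2$, and $\psi^1=\psi^2$, the last requiring a perturbation argument near the coincidence locus and exploiting once more the disjointness of the positivity sets. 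Once $\tilde\psi=\psi$ is established, $\{\psi(\cdot,t)>0\}=\{\psi^1(\cdot,t)>0\}\cup\{\psi^2(\cdot,t)>0\}$ yields $D(t)=D_1(t)\cup D_2(t)$, and letting $\varepsilon\to 0$ in the reduction step finishes the proof.
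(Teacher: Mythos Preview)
The paper does not give its own proof of this lemma; it is quoted verbatim from \cite{Z} and simply cited. So there is nothing in the present paper to compare against, and your proposal has to stand on its own.

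Your first inclusion $D_1(t)\cup D_2(t)\subset D(t)$ via Theorem~\ref{thm:order}(1) is correct, and the inner-approximation reduction to the case $\overline{U_1}\cap\overline{U_2}=\emptyset$ is also fine: monotone convergence (Theorem~\ref{thm:mon}(1)) does the job, and the disjointness hypothesis for the $\varepsilon$-evolutions is inherited from $D_i^\varepsilon(t)\subset D_i(t)$.

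The genuine gap is in the second half. You want $\tilde\psi=\max(\psi^1,\psi^2)$ to be a viscosity supersolution, but the maximum of two supersolutions is \emph{not} a supersolution in general, and your outline does not close this. On the region $\{\tilde\psi>0\}$ your localization argument is valid: disjointness of $D_1(t)$ and $D_2(t)$ forces exactly one $\psi^i$ to be positive, hence $\tilde\psi$ agrees with it nearby. But on $\{\tilde\psi\le 0\}$ \emph{both} $\psi^1$ and $\psi^2$ are nonpositive, so the hypothesis $D_1(t)\cap D_2(t)=\emptyset$ gives you no information whatsoever there; invoking it ``once more'' is an empty gesture. Concretely, at a coincidence point $\psi^1(x_0,t_0)=\psi^2(x_0,t_0)\le 0$, a smooth $\varphi$ touching $\tilde\psi$ from below can sit strictly between $\min(\psi^1,\psi^2)$ and $\max(\psi^1,\psi^2)$ in a neighborhood, so it touches neither $\psi^1$ nor $\psi^2$ from below and no supersolution inequality is available. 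A ``perturbation argument near the coincidence locus'' does not rescue this: the coincidence set $\{\psi^1=\psi^2\}$ has no reason to be thin or regular at negative levels, and nothing in your setup controls it.

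In short, the proof as written establishes only the easy inclusion. To complete the argument you would need either a different route to $D(t)\subset D_1(t)\cup D_2(t)$ that avoids the supersolution claim for $\tilde\psi$ altogether, or an argument specific to the geometric structure of the level-set equation (e.g.\ replacing $\psi^i$ by $\max(\psi^i,0)$, for which the coincidence set is exactly the complement of $D_1(t)\cup D_2(t)$, and then handling the boundary case carefully). Either way, what you have now is a sketch with the key step missing.
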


\begin{thm}\label{thm:openevolutionmeancurvature}(Local smoothness for graphs) Suppose that $\psi$ is a viscosity solution of (\ref{eq:level}). Assume in an open region $U\times(t_1,t_2)$,
$$
\{(x,t)\mid \psi=0\}\cap U=\{(x,t)\mid x_N=g(x^{\prime},t), x^{\prime}\in U^{\prime}\},
$$
where $x^{\prime}=(x_1,\cdots,x_{N-1})$, $U^{\prime}=U\cap\{x_N=0\}$ and g is continuous in $U^{\prime}\times(t_1,t_2)$. Then the function $g$ is a viscosity solution of 
$$
g_t=\left(\delta_{ij}-\frac{g_{x_i}g_{x_j}}{1+|\nabla g|^2}\right)g_{x_ix_j}+ A\sqrt{1+|\nabla g|^2},
$$
or 
$$
g_t=\left(\delta_{ij}-\frac{g_{x_i}g_{x_j}}{1+|\nabla g|^2}\right)g_{x_ix_j}-A\sqrt{1+|\nabla g|^2}.
$$
If the direction of the driving force of $\{(x,t)\mid \psi=0\}\cap U$ is upward (downward), we choose ``$+$'' (``$-$'') in above graph equation.

Moreover, $g$ is $C^{\infty}$ in the region $U^{\prime}\times(t_1,t_2)$.
\end{thm}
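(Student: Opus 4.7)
\emph{Proof plan.} The plan proceeds in two stages: first to verify that $g$ is a viscosity solution of the appropriate graph equation, then to bootstrap from continuous viscosity solution to smooth classical solution via quasilinear parabolic regularity.

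For the first stage, the idea is to lift $C^2$ test functions for $g$ to $C^2$ test functions for $\psi$. Suppose $\varphi \in C^2(U' \times (t_1, t_2))$ touches $g$ from above at an interior point $(x_0', t_0)$, with $x_{0,N} = g(x_0', t_0)$. Set
\[
\Phi(x,t) := \sigma\bigl(\varphi(x',t) - x_N\bigr),
\]
where $\sigma \in \{+1,-1\}$ is chosen so that the local sign of $\Phi$ agrees with that of $\psi$ on the appropriate side of the graph, that is, so that the component of $U\setminus\{g = x_N\}$ corresponding to $\{\psi > 0\}$ is the one where $\Phi > 0$. Then $\Phi$ touches $\psi$ from the correct side at $(x_0,t_0)$ and $\nabla \Phi = \sigma(\nabla_{x'}\varphi, -1) \neq 0$, so the level set equation evaluated on $\Phi$ is in its non-degenerate regime. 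A direct computation of $\mathrm{div}(\nabla \Phi / |\nabla \Phi|)$ and $A|\nabla \Phi|$ in terms of derivatives of $\varphi$, combined with the viscosity inequality for $\psi$, yields exactly
\[
\varphi_t \leq \Bigl(\delta_{ij} - \frac{\varphi_i \varphi_j}{1+|\nabla \varphi|^2}\Bigr)\varphi_{ij} \pm A\sqrt{1+|\nabla \varphi|^2},
\]
with the sign of $A$ determined by $\sigma$, i.e.\ by whether the driving force points upward or downward along the graph. The supersolution inequality is obtained symmetrically by touching $g$ from below.

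For the second stage, $g$ is a continuous viscosity solution of a quasilinear parabolic equation whose principal part $\delta_{ij} - g_i g_j/(1+|\nabla g|^2)$ is uniformly elliptic once $|\nabla g|$ is locally bounded. A local Lipschitz bound on $g$ is available from the interior gradient estimate for graphical mean curvature flow with bounded forcing, which is precisely one of the preliminary tools collected in Section 3. Given this bound, the equation becomes uniformly parabolic with continuous coefficients, so Krylov--Safonov type estimates yield $C^{1,\alpha}$ regularity; Schauder theory then upgrades $g$ to $C^{2,\alpha}$, and the usual differentiation-bootstrap argument delivers $g \in C^\infty$.

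The main obstacle is the orientation bookkeeping in stage one. The $\pm A$ dichotomy is controlled by which connected component of the complement of $\Gamma(t)$ locally contains $\{\psi > 0\}$, and this must be read off consistently when passing between test functions touching $g$ from above and from below, as well as when producing both the sub- and supersolution inequalities for $g$. By comparison, the regularity bootstrap of stage two is essentially classical once the gradient bound is secured, so the genuine conceptual content lies in the viscosity-theoretic reduction carried out in stage one.
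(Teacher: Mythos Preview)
The paper does not give its own proof of this theorem; it simply cites Evans--Spruck \cite{ES} (Theorems 5.1 and 5.4) and omits the argument. Your two-stage plan---reduce to showing $g$ is a viscosity solution of the graph equation, then bootstrap to $C^\infty$ via interior gradient estimates and quasilinear Schauder theory---is exactly the Evans--Spruck route, so at the level of strategy you are aligned with what the paper invokes.

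There is, however, a genuine gap in your execution of stage one. The assertion that $\Phi(x,t)=\sigma(\varphi(x',t)-x_N)$ ``touches $\psi$ from the correct side at $(x_0,t_0)$'' is not justified and is in general false. All you know about $\psi$ is that it vanishes on the graph $\{x_N=g\}$ and has a definite sign on each side; its actual values off the zero set are completely unconstrained. Concretely, with the orientation $\psi>0$ on $\{x_N>g\}$ and $\sigma=+1$, in the region $\{x_N>\varphi(x',t)\}$ one has $\psi>0$ while $\Phi<0$, so $\psi-\Phi>0$ there, and $(x_0,t_0)$ is not a local maximum of $\psi-\Phi$. The same obstruction arises for the other choices of sign. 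So $\Phi$ is not an admissible test function for $\psi$ in the naive sense.

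This is precisely the point where the Evans--Spruck argument does real work. One cannot lift test functions in one step; instead one exploits the geometric invariance of the level set equation (replacing $\psi$ by $\theta\circ\psi$) together with a perturbation over nearby levels $\{\psi=c\}$, or equivalently works with the associated sub- and superflows and passes to the limit. The orientation bookkeeping you flag is a secondary issue; the primary missing ingredient is this relabeling/limiting step that converts information about the zero set of $\psi$ into a genuine viscosity test for $\psi$ itself. Once that is in place, your stage two is fine and indeed standard.
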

The proof is similar to that in \cite{ES} (Theorem 5.1 and Theorem 5.4), and we omit it.

\section{Preliminaries}

In this section, we prepare some preliminaries for future purposes. The results in this section can be found in \cite{Z}.

{\bf Graph equation.} Let $u(x,t)$ be a function on an open subset of $\mathbb{R}^n\times \mathbb{R}$, then the graph of $u(x,t)$ is a family of hypersurfaces in $\mathbb{R}^{n+1}$. The family of hypersurfaces moves by $V=-\kappa+A$ if and only if 
$$
u_t=\left(\delta_{ij}-\frac{u_{x_i}u_{x_j}}{1+|\nabla u|^2}\right)u_{x_ix_j}\pm A\sqrt{1+|\nabla u|^2},
$$
where the signs of the last terms are determined by the direction of the driving force.

{\bf Horizontal and vertical graph equation.} If $\Gamma(t)$ is a family of rotationally symmetric hypersurfaces in $\mathbb{R}^{n+1}$, then for each $t>0$, a part of $\Gamma(t)$ may be represented either as a horizontal graph, $r=u(x,t)$, or a vertical graph, $x=v(r,t)$, where $r=\sqrt{y_1^2+y_2^2+\cdots+y_n^2}$, $(x,y)\in \mathbb{R}\times\mathbb{R}^n$.

If $\Gamma(t)$ is given as a horizontal graph, then $\Gamma(t)$ evolves by $V=-\kappa+A$ in $\mathbb{R}^{n+1}$ and the direction of the driving force points to the positive direction of $r=|y|$ axis if and only if $u$ satisfies the horizontal graph equation 
\begin{equation}\label{eq:1horizontal}
\frac{\partial u}{\partial t}=\frac{u_{xx}}{1+u_x^2}-\frac{n-1}{u}+A\sqrt{1+u_x^2}.
\end{equation}
If $\Gamma(t)$ is given as a vertical graph, then $\Gamma(t)$ evolves by $V=-\kappa+A$ in $\mathbb{R}^{n+1}$ if and only if $v$ satisfies the vertical graph equation
\begin{equation}\label{eq:1vertical+}
\frac{\partial v}{\partial t}=\frac{v_{rr}}{1+v_r^2}+\frac{n-1}{r}v_r+ A\sqrt{1+v_r^2},
\end{equation}
or
\begin{equation}\label{eq:1vertical-}
\frac{\partial v}{\partial t}=\frac{v_{rr}}{1+v_r^2}+\frac{n-1}{r}v_r-A\sqrt{1+v_r^2},
\end{equation}
where the sign of the last term is determined by the direction of the driving force (We choose ``$+$($-$)'' when the direction of the driving force is the positive (negative) direction of $x$ axis).

\begin{thm}\label{thm:es}(Theorem 3.1 in \cite{Z}) For $u\in C^3(\Omega_{T})\cap C^0 (\overline{\Omega}_T)$, $u$ satisfies
\begin{equation}\label{eq:graph}
u_t=\dis{\left(\delta_{ij}-\frac{u_{x_i}u_{x_j}}{1+|\nabla u|^2}\right)u_{x_ix_j}\pm A\sqrt{1+|\nabla u|^2}},
\end{equation}
For the condition ``$+$''(``$-$''), we assume $u<0$ ($u>0$) in $\Omega_T$, $u(0,T)=-v_0$ ($u(0,T)=v_0$). Then
$$
|\nabla u(0,T)|\leq (3+16v_0)e^{2K},
$$
where $\dis{K=20v_0^2(4n+\frac{1}{T}+4A+\frac{A}{2v_0})}+2$, $\Omega_T=B_1(0)\times (0, 2T)\subset \mathbb{R}^n\times(0,\infty)$ and
$$
\delta_{ij}=\left\{
\begin{array}{lcl}
1,\ i=j,\\
0,\ i\neq j.
\end{array}
\right.
$$
\end{thm}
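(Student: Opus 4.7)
The proof would follow a Bernstein-type interior gradient estimate, adapted to handle the driving-force term. By the symmetry $u \mapsto -u$, which swaps the two sign choices, I would reduce to the case $u > 0$ on $\Omega_T$ with $u(0,T)=v_0$ and the equation carrying the ``$-$'' sign. The core idea is to introduce an auxiliary function of the form
\[
W(x,t) = \eta(x)\,\tau(t)\,\Phi(u)\,\bigl(1+|\nabla u|^2\bigr),
\]
where $\eta \in C^\infty_c(B_1(0))$ is a spatial cutoff (e.g.\ $\eta=(1-|x|^2)_+^2$), $\tau(t)=t$ is a temporal cutoff vanishing at $t=0$, and $\Phi(u)=e^{\lambda u}$ is an exponential weight with parameter $\lambda>0$ of order $v_0$ to be chosen. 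The cutoffs guarantee that the maximum of $W$ over $\overline{B_1(0)}\times[0,2T]$ is attained at an interior point $(x_0,t_0)$.

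The next step is to compute $\mathcal{L}W$, where $\mathcal{L} = \partial_t - a^{ij}\partial_i\partial_j$ with $a^{ij} = \delta_{ij} - u_{x_i}u_{x_j}/(1+|\nabla u|^2)$. Using $\mathcal{L}u = \pm A\sqrt{1+|\nabla u|^2}$, the Bochner-type identity
\[
\mathcal{L}(1+|\nabla u|^2) \;=\; 2u_{x_k}\mathcal{L}u_{x_k} - 2 a^{ij}u_{x_i x_k}u_{x_j x_k},
\]
and differentiating the equation once to express $\mathcal{L}u_{x_k}$ in terms of $u_{x_ix_j}$ and $\nabla u$, one arrives at a pointwise inequality of the schematic form
\[
\mathcal{L}W \;\leq\; -c\,\frac{\eta\,\tau\,\Phi\,|\nabla^2 u|^2}{1+|\nabla u|^2} \;+\; (\text{lower-order terms involving } \nabla u,\ \nabla\eta,\ \tau'/\tau).
\]
At the interior maximum, $\nabla W=0$ and $\mathcal{L}W\geq 0$; the first condition expresses $\nabla(|\nabla u|^2)$ in terms of $\nabla u$, $\nabla\eta$, and $\lambda\nabla u$, and after Cauchy--Schwarz these contributions can be absorbed into the good term $-c|\nabla^2 u|^2/(1+|\nabla u|^2)$. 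What remains is an algebraic inequality bounding $(1+|\nabla u(x_0,t_0)|^2)$. Evaluating $W$ at $(0,T)$, where $\eta(0)\tau(T)\Phi(v_0)$ is a fixed positive quantity depending only on $T$ and $v_0$, and solving out yields the stated bound $|\nabla u(0,T)|\leq(3+16v_0)e^{2K}$ with $K$ of the claimed form.

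The main obstacle is the driving-force contribution. Differentiating $\pm A\sqrt{1+|\nabla u|^2}$ produces a term of size $A|u_{x_k}u_{x_ix_k}|/\sqrt{1+|\nabla u|^2}$ that couples $\nabla u$ to $\nabla^2 u$; absorbing it into the good second-order term via Cauchy--Schwarz costs a contribution of order $A^2$, accounting for the $4A$ and $A/(2v_0)$ appearing inside $K$. Equally delicate is the choice of the exponent $\lambda$ in $\Phi$: since the hypothesis only gives the \emph{sign} of $u$ on $\Omega_T$, not a pointwise bound $|u|\leq v_0$ there, one must use an exponential weight $e^{\lambda u}$ with $\lambda\sim v_0$ so that, at the maximum of $W$, the value $u(x_0,t_0)$ is forced to be comparable to $v_0$. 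This is the origin of the factor $v_0^2$ in $K$ and of the exponential-in-$v_0^2$ growth in the final bound.
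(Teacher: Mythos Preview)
The paper does not actually prove this theorem: it is quoted verbatim as ``Theorem 3.1 in \cite{Z}'' and the section opens with ``The results in this section can be found in \cite{Z}.'' So there is no proof in the present paper to compare your proposal against.

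That said, your outline is the standard Evans--Spruck/Bernstein-type strategy one would expect for such a result, and it is the approach used in the cited source. A couple of points in your write-up are imprecise. First, you say absorbing the driving-force term via Cauchy--Schwarz ``costs a contribution of order $A^2$,'' but the $A$-dependence in $K$ is linear ($4A$ and $A/(2v_0)$), so the bookkeeping you describe does not match the stated constant. Second, your explanation of the role of the weight $\Phi(u)=e^{\lambda u}$ is off: the exponential weight is not there to ``force $u(x_0,t_0)$ to be comparable to $v_0$'' at the maximum, but rather because the term $\Phi''(u)|\nabla u|^2$ produced when $\mathcal{L}$ hits $\Phi(u)$ supplies a good negative contribution proportional to $\lambda^2|\nabla u|^2(1+|\nabla u|^2)$, which is what ultimately caps $|\nabla u|$ at the maximum of $W$. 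The parameter $\lambda$ is chosen large enough (depending on $n$, $T^{-1}$, $A$) to absorb the error terms coming from the cutoffs and the driving force; when you unwind $W(0,T)\le W(x_0,t_0)$ the factor $e^{\lambda u(0,T)}=e^{\lambda v_0}$ enters, and with $\lambda$ of the required size this produces the $e^{2K}$ with $K\sim v_0^2(\cdots)$. Your sketch has the right architecture but the mechanism you describe for the weight is not quite the correct one.
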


\begin{rem}\label{rem:es} (1) In Theorem \ref{thm:es}, $\Omega_T$ can be replaced by $\Omega_T=B_R(x_0)\times(0,2T)$ and $v_0=u(x_0,T)$. Then the conclusion becomes
$$
|\nabla u(x_0,T)|\leq e^{2K}(3+16\frac{v_0}{R}),
$$
where $\dis{K=20\frac{v_0^2}{R^2}\left(4n+\frac{R^2}{T}+\frac{4A}{R}+\frac{A}{2v_0}\right)}+2$. Here We set $v(x,t)=\dis{\frac{u(Rx+x_0,R^2t)}{R}}$, and apply Theorem \ref{thm:es} to $v(x,t)$.

(2) When $u$ is the solution of (\ref{eq:graph}) for ``+'' without the assumption ``$u<0$'', if we set
$$v=u-M-\epsilon$$
where $M=\sup\limits_{\overline{\Omega}_T} |u|$ and $\epsilon>0$. and apply (1) in Remark \ref{rem:es} to $v$, we get
$$|\nabla u(0,T)|\leq\left(3+16\frac{M-u(0,T)+\epsilon}{R}\right)e^{2\widetilde{K}_{\epsilon}},$$
where $\dis{\widetilde{K}_{\epsilon}=\frac{20(M-u(0,T)+\epsilon)^2}{R^2}\left(4n+\frac{R^2}{T}+\frac{4A}{R}
+\frac{A}{2(M+\epsilon-u(0,T))}\right)}$+2.

As $\epsilon\rightarrow0$, we have
$$
|\nabla u(0,T)|\leq \left(3+32\frac{M}{R}\right)e^{2\widetilde{K}},
$$
where $\dis{\widetilde{K}=\frac{80M^2}{R^2}\left(4n+\frac{R^2}{T}+\frac{4A}{R}\right)
+\frac{20AM}{R^2}}+2$.
\end{rem}

Then we use the (2) in Remark \ref{rem:es} and the same method as in \cite{AAG} to prove the next corollary.

\begin{cor}\label{cor:es} For $s_1<s_2$, $\rho>0$ and $x_0\in\mathbb{R}^n$ we set
$$
\Omega=B_{\rho}(x_0)\times(s_1,s_2).
$$
Suppose that $u\in C^3(\Omega)$ solves the equation (\ref{eq:graph}) in $\Omega$ with $M=\sup\limits_{\overline{\Omega}}|u|<\infty$. For any $\epsilon>0$ there is a constant $C=C(M,\epsilon,n)$ such that
$$|\nabla u|\leq C \ \textrm{on}\ \Omega_{\epsilon}=B_{\rho-\epsilon}(x_0)\times(s_1+\epsilon^2,s_2).$$
\end{cor}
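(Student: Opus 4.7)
The plan is to obtain the uniform bound on $\Omega_\epsilon$ by applying the pointwise gradient estimate of Remark~\ref{rem:es}(2) at each $(y,t)\in\Omega_\epsilon$, using a sub-parabolic-cylinder of $\Omega$ that is ``centered'' at $(y,t)$ in the sense required by Theorem~\ref{thm:es}. Since the pointwise estimate already produces a bound depending only on the sup of $|u|$, the radius, the time from the bottom of the cylinder, and the equation parameters $n,A$, no additional PDE work is needed beyond a correct choice of the sub-cylinder.

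First I would fix $(y,t)\in\Omega_\epsilon$ and set $R=\epsilon/2$, $\tau=\epsilon^2/4$. Because $y\in B_{\rho-\epsilon}(x_0)$ one has $B_R(y)\subset B_\rho(x_0)$, and because $t>s_1+\epsilon^2$ one has $t-2\tau>s_1$. Hence the parabolic sub-cylinder
\[
Q_{y,t}:=B_R(y)\times(t-2\tau,\,t)\subset\Omega,
\]
and $u$ solves (\ref{eq:graph}) there with $\sup_{\overline{Q}_{y,t}}|u|\leq M$. Translating time so that $Q_{y,t}$ becomes $B_R(0)\times(0,2\tau)$ (centering space at $y$), I apply Remark~\ref{rem:es}(2), which is the version that does not require a sign on $u$, to obtain
\[
|\nabla u(y,t)|\leq\Bigl(3+32\tfrac{M}{R}\Bigr)\,e^{2\widetilde{K}},\qquad \widetilde{K}=\tfrac{80M^2}{R^2}\Bigl(4n+\tfrac{R^2}{\tau}+\tfrac{4A}{R}\Bigr)+\tfrac{20AM}{R^2}+2.
\]
Substituting $R=\epsilon/2$ and $\tau=\epsilon^2/4$ turns the right-hand side into a quantity depending only on $M$, $\epsilon$, $n$ (with $A$ being the fixed driving force of the equation), which I would denote $C=C(M,\epsilon,n)$. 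This bound is independent of the particular choice of $(y,t)\in\Omega_\epsilon$, so $|\nabla u|\leq C$ throughout $\Omega_\epsilon$.

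The only delicate point — and the one step I would flag as the main obstacle — is the consistency between the location of the ``estimate point'' inside the sub-cylinder as used in Theorem~\ref{thm:es} and the fact that the corollary allows $t$ to be arbitrarily close to $s_2$. If one insists that the estimate point lie at the midpoint in time of the sub-cylinder $(0,2T)$, then for $t$ close to $s_2$ no such cylinder fits in $\Omega$. This is handled by using only the past in time: the proof of Theorem~\ref{thm:es} (following the method of the analogous interior gradient estimate in \cite{AAG}) really depends only on the cylinder below the estimate point, so the one-sided sub-cylinder $B_R(y)\times(t-2\tau,t)$ built above is sufficient, and the argument goes through for all $(y,t)\in\Omega_\epsilon$ up to $t\to s_2^-$.
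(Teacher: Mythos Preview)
Your proposal is correct and is exactly the approach the paper intends: the paper's proof is just the sentence ``we use (2) in Remark~\ref{rem:es} and the same method as in \cite{AAG},'' and what you have written is precisely how that method is carried out---apply the pointwise estimate of Remark~\ref{rem:es}(2) on a sub-cylinder under each point of $\Omega_\epsilon$. Your identification and resolution of the one-sided-in-time issue (that the estimate in Theorem~\ref{thm:es} only uses the portion of the cylinder below the estimate point, so no room above $t$ is needed) is the standard point from \cite{AAG} and is exactly why the constant is independent of $s_2$, as noted in Remark~\ref{rem:hes}(2).
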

\begin{rem}\label{rem:hes}
(1) From Corollary \ref{cor:es} and \cite{LSU}, there exist $C_k(M,\epsilon,n)$ such that 
$$
|\nabla^k u|\leq C_k,\ (x,t)\in B_{\rho-2\epsilon}(x_0)\times(s_1+2\epsilon^2,s_2).
$$
(2) Noting that $C$ and $C_k$ are all independent of $s_2$, if the solution $u$ exists for all $t>s_1$, then $s_2$ can be taken as $\infty$ in Corollary \ref{cor:es}.
\end{rem}

\section{Intersection number principle}

Next, we begin to introduce the modifying intersection number principle.

{\bf Intersection number for rotationally symmetric hypersurfaces.} Let $r=|y|$, $y\in\mathbb{R}^n$. For two rotationally symmetric hypersurfaces $\Gamma_1(t)$ and $\Gamma_2(t)$, given by $\Gamma_1(t)=\{(x,y)\in \mathbb{R}\times\mathbb{R}^{n}\mid r=u_1(x,t)\}$ and $\Gamma_2(t)=\{(x,y)\in \mathbb{R}\times\mathbb{R}^{n}\mid r=u_2(x,t)\}$, define the number of intersections between $u_1(\cdot,t)$ and $u_2(\cdot,t)$ as the intersection number between $\Gamma_1(t)$ and $\Gamma_2(t)$ denoted by $\mathcal{Z}[\Gamma_1(t),\Gamma_2(t)]$. 

It is well known that the intersection number between two families of rotationally symmetric hypersurfaces $\Gamma_1(t)$ and $\Gamma_2(t)$ evolving by $V=-\kappa$ is not increasing (\cite{A2}). But it is not true for $V=-\kappa+A$. To conquer this difficulty, in \cite{Z}, they give some results for the intersection number in the case $V=-\kappa+A$. 

\begin{thm}\label{thm:intersection1}(Remark 4.4 in \cite{Z})
Let $\Gamma_1(t)$ and $\Gamma_2(t)$ be two closed, compact, rotationally symmetric hypersurfaces given by $\Gamma_1(t)=\{(x,y)\in \mathbb{R}\times\mathbb{R}^n\mid r=u_1(x,t),a_1(t)\leq x\leq b_1(t)\}$,\ $\Gamma_2(t)=\{(x,y)\in \mathbb{R}\times\mathbb{R}^n\mid r=u_2(x,t),\ a_2(t)\leq x\leq b_2(t)\}$. If $\Gamma_i(t)$ evolve by $V=-\kappa+A$ in $\mathbb{R}^{n+1}$, $i=1,2$. Then 
\\
(a). $\mathcal{Z}(\Gamma_1(t),\Gamma_2(t))>0$ does not increase in $t$ when $t$ satisfies $\mathcal{Z}(\Gamma_1(t),\Gamma_2(t))>0$.
\\
(b). If $\mathcal{Z}(\Gamma_1(t_0),\Gamma_2(t_0))=0$, then $\mathcal{Z}(\Gamma_1(t),\Gamma_2(t))\leq 1$ for $t_0<t<T$. 
\end{thm}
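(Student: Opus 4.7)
The plan is to introduce $w(x,t) := u_1(x,t) - u_2(x,t)$ on the overlap interval $I(t) := (\max\{a_1(t),a_2(t)\},\min\{b_1(t),b_2(t)\})$ and apply Angenent's zero-counting lemma (the parabolic analogue of Sturm's theorem) to this linearised problem. Subtracting the two horizontal graph equations (\ref{eq:1horizontal}) and linearising each nonlinear term via the mean value theorem, $w$ satisfies
$$w_t = \alpha(x,t)\,w_{xx} + \beta(x,t)\,w_x + \gamma(x,t)\,w$$
with $\alpha>0$ strictly positive and with coefficients continuous on $\{(x,t) : x\in I(t),\ t\in(0,T)\}$. The reaction coefficient picks up a singular contribution $(n-1)/(u_1 u_2)$ from the $-(n-1)/u$ term, but this stays locally bounded on any compact subset away from the endpoints, which is all that Angenent's theorem requires.

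For (a), the key observation is that away from endpoint collisions the boundary values of $w$ on $\partial I(t)$ have a definite, non-vanishing sign: at $\max\{a_1,a_2\}$ one has $w=-u_2$ or $w=u_1$ depending on which endpoint is larger, and similarly at the right boundary. Thus no zero can leak in through the boundary, and Angenent's lemma gives the non-increase of $\mathcal{Z}(\Gamma_1(t),\Gamma_2(t))$ on any time interval free of endpoint collisions. A boundary sign of $w$ can flip only when an endpoint collision $a_1(t^\ast)=a_2(t^\ast)$ (or its analogue at $b$) occurs; but at such a moment the two hypersurfaces actually touch on the $x$-axis, so any interior zero that could appear just after the collision is the continuous limit of a boundary intersection that was already present. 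Under the standing hypothesis $\mathcal{Z}>0$, a short-time analysis around each collision shows that no net new interior zero is created, completing (a).

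For (b), I would track the count starting from $\mathcal{Z}(t_0)=0$. Between successive endpoint collisions, the count is non-increasing by the argument above. At each collision, only one side of $\partial I(t)$ flips sign, so at most one new interior zero can be injected. Hence the count can rise to at most $1$, after which part (a) prevents any further increase, giving $\mathcal{Z}(t)\le 1$ for $t_0<t<T$.

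The main obstacle I expect is the boundary bookkeeping at endpoint collisions combined with the singularity of $\gamma$ as $u_i\to 0$, which is where Angenent's framework does not apply directly. My intended workaround is to apply the zero-counting lemma on a family of shrunken sub-intervals $I_\varepsilon(t)\Subset I(t)$, on which the coefficients and their derivatives are bounded thanks to the interior gradient and higher-derivative bounds from Corollary \ref{cor:es} and Remark \ref{rem:hes}, and then pass to the limit $\varepsilon\to 0^+$, tracking how each zero either stays in the interior or escapes through the moving boundary.
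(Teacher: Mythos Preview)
The paper does not give its own proof of this statement; it is quoted verbatim from \cite{Z} (Remark 4.4 there) and used as a black box. So there is nothing in the present paper to compare your argument against.

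That said, your strategy---reduce to the difference $w=u_1-u_2$ on the overlap interval, linearise to a scalar parabolic equation, and invoke Angenent's Sturm-type zero-counting theorem (essentially \cite{A1}) in the interior, with separate bookkeeping at the moving endpoints---is exactly the expected route and almost certainly what \cite{Z} does. The interior part is routine: the singular coefficient $(n-1)/(u_1u_2)$ is harmless on compact subsets of $I(t)$, and the truncation $I_\varepsilon(t)\Subset I(t)$ you propose is the standard way to handle it.

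The genuine content, however, is the endpoint analysis, and here your sketch is too loose to count as a proof. For part~(a), the assertion that ``no net new interior zero is created'' at a collision when $\mathcal{Z}>0$ is precisely the nontrivial fact distinguishing $V=-\kappa+A$ from pure mean curvature flow; you have stated it but not argued it. Concretely: suppose $b_1(t)\uparrow b_2(t)$, so the right boundary sign of $w$ flips from negative to positive at the collision time. You need to show that the zero which ``enters'' from the right is the same one that was already being counted (or that one is simultaneously lost), and this requires tracking the sign pattern of $w$ near the boundary through the collision, not just the boundary value itself. Similarly for part~(b), ``at most one zero injected per collision'' is true but needs justification: a sign flip at one endpoint changes the parity of $\mathcal{Z}$ by one, but you must rule out a jump of $+3$, $+5$, etc., which again comes down to controlling $w$ in a one-sided neighbourhood of the collapsing endpoint. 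Your final paragraph correctly identifies this as the main obstacle; resolving it is the actual work, and what you have written is a plan rather than a proof.
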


Using Theorem \ref{thm:intersection1}, we can prove following Theorem \ref{thm:grad} and Theorem \ref{thm:gu}.

\begin{thm}\label{thm:grad}(Theorem 4.2 in \cite{Z}) $\Gamma(t)=\{(x,y)\in\mathbb{R}^{n+1}\mid r=u(x,t),a_2(t)\leq x\leq b_2(t)\}$ is a smooth family of closed, smooth hypersurfaces in $\mathbb{R}^{n+1}$ for $0<t<T$. If $\Gamma(t)$ evolves by $V=-\kappa+A$ in $\mathbb{R}^{n+1}$,  there is a function $\sigma$: $\mathbb{R}_+\times\mathbb{R}_+\rightarrow\mathbb{R}$ such that
$$
|u_x(x,t)|\leq \sigma(t,u(x,t))
$$
holds for $0<t<T$, $a_2(t)<x<b_2(t)$. The function $\sigma$ depends only on $M=\max\limits_{a_2(0)<x<b_2(0)} u(x,0)$ and $T$.
\end{thm}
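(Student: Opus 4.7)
The plan is to exploit the intersection number principle of Theorem~\ref{thm:intersection1} by testing $\Gamma(t)$ against a family of axisymmetric spheres that also evolve by $V=-\kappa+A$. Spheres centered on the $x$-axis are preserved by the flow and their radii obey the explicit ODE $R'=A-n/R$.

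First I would establish an a priori height bound. Comparing $\Gamma(t)$ with a large sphere of initial radius $R_0>M+\max(|a_2(0)|,|b_2(0)|)$ centered at the origin, the order-preserving property (Theorem~\ref{thm:order}) yields $\sup_x u(x,t)\le \bar{M}(t,M,n,A)$, with $\bar{M}$ explicitly computable from the sphere ODE. This also bounds the $x$-extent of $\Gamma(t)$ in terms of $M$ and $t$.

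Next, fix an interior point $(x_0,t_0)$ with $u(x_0,t_0)=h>0$ and, by symmetry, assume $\lambda:=u_x(x_0,t_0)\ge 0$. I would construct a ``critical'' test sphere $\Sigma_{*}(t)$ with center $(c,0,\dots,0)$ and radius $\rho(t)$, both evolving by $V=-\kappa+A$, whose profile $r=\sqrt{\rho(t_0)^{2}-(x-c)^{2}}$ is tangent at time $t_0$ to the graph $r=u(x,t_0)$ at $(x_0,h)$. Elementary geometric matching of value and slope forces
\[
\rho(t_0)=h\sqrt{1+\lambda^{2}},\qquad c=x_0+h\lambda,
\]
and integrating the sphere ODE backward from $t_0$ to $0$ determines $\rho(0)=\rho_0(h,\lambda,t_0)$. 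A slight enlargement of $\rho_0$ produces a perturbed sphere whose profile at time $t_0$ crosses the graph of $u(\cdot,t_0)$ transversally in at least two points close to $(x_0,h)$, so that $\mathcal{Z}[\Gamma(t_0),\Sigma_{*}(t_0)]\ge 2$.

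For $\lambda$ large, the center $c=x_0+h\lambda$ lies far to the right of $b_2(0)$, and because backward evolution preserves the regime (a sphere with $\rho(t_0)>n/A$ has $\rho(0)<\rho(t_0)$, and conversely for $\rho(t_0)<n/A$), $\rho(0)$ is controllable. I would choose parameters so that $\Sigma_{*}(0)$ lies strictly to the right of $\Gamma_0$, giving $\mathcal{Z}[\Gamma(0),\Sigma_{*}(0)]=0$. Theorem~\ref{thm:intersection1}(b) then forces $\mathcal{Z}[\Gamma(t),\Sigma_{*}(t)]\le 1$ on $(0,T)$, contradicting the construction at $t=t_0$. The threshold value of $\lambda$ above which this contradiction can be arranged is the desired $\sigma(t_0,h)$.

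The main obstacle is the simultaneous realisation of two constraints: the initial disjointness $\Sigma_{*}(0)\cap\Gamma_0=\emptyset$ together with the tangency-with-matched-slope condition at time $t_0$. Quantitatively, the constraint reduces to the inequality $c-\rho(0)>b_2(0)$, with $c=x_0+h\lambda$, $\rho(0)$ obtained from backward integration of $\rho'=A-n/\rho$ starting at $\rho(t_0)=h\sqrt{1+\lambda^{2}}$, and $b_2(0)\le |a_2(0)|+|b_2(0)|$ bounded by the data. Tracking how these quantities scale with $h$ and $t_0$, one finds that the inequality fails once $\lambda$ exceeds a quantity $\sigma(t_0,h)$ depending only on $M$, $T$, $n$, $A$; blow-up of $\sigma$ as $t\to 0^{+}$ or as $u\to 0^{+}$ reflects the shrinking of the admissible ``off-axis'' displacement $c-b_2(0)$ in those limits.
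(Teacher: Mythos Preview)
The paper does not give its own proof here; it only states that Theorem~\ref{thm:grad} follows from the intersection principle and cites \cite{Z}. So I can only evaluate your proposal on its own merits, and it has a concrete quantitative gap.

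Your mechanism rests on the claim that, once $\lambda=u_x(x_0,t_0)$ is large, the tangent sphere run back to time $0$ lies strictly to the right of $\Gamma_0$. It does not. With $c=x_0+h\lambda$ and $\rho(t_0)=h\sqrt{1+\lambda^{2}}$ one has
\[
c-\rho(t_0)=x_0-\frac{h}{\lambda+\sqrt{1+\lambda^{2}}}\longrightarrow x_0\qquad(\lambda\to\infty),
\]
and since $\rho(t_0)\gg n/A$ for large $\lambda$, backward integration of $\rho'=A-n/\rho$ gives $\rho(0)=\rho(t_0)-At_0+O(1/\lambda)$. Hence
\[
c-\rho(0)\longrightarrow x_0+At_0\qquad(\lambda\to\infty),
\]
a finite limit determined by the point you started from, not a quantity you can push past $b_2(0)$ by enlarging $\lambda$. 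The left edge of the sphere is pinned near $x_0+At_0$; both $c$ and $\rho(0)$ diverge like $h\lambda$, and they cancel. Thus the separation criterion $c-\rho(0)>b_2(0)$ cannot be forced, and your contradiction with Theorem~\ref{thm:intersection1}(b) never fires.

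Two further problems, even if the disjointness were available. First, your perturbation produces two crossings of the profiles \emph{near} $(x_0,h)$, but the sphere has radius of order $h\lambda$ and its profile sweeps over a large portion of $[a_2(t_0),b_2(t_0)]$; you have not ruled out additional crossings elsewhere, so you do not actually know $\mathcal Z[\Gamma(t_0),\Sigma_*(t_0)]\ge 2$ globally. Second, your argument tracks $b_2(0)$ explicitly, so the threshold you obtain would depend on the initial $x$-extent of $\Gamma_0$, whereas the statement requires $\sigma$ to depend only on $M=\max u_0$ and $T$. The sphere-from-the-right idea cannot deliver that dependence.
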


\begin{thm}\label{thm:gu}(Theorem 4.5 in \cite{Z}) Let $\Gamma(t)$, $t\in [0,T)$, be a family of smooth hypersurfaces evolving by $V=-\kappa+A$ in $\mathbb{R}^{n+1}$. If $\Gamma(0)$ is obtained by rotating the graph of a function around the $x$-axis, $\Gamma(t)$ also has the same symmetry as $\Gamma(0)$ for $t\in[0,T).$
\end{thm}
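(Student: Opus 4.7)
The plan is to exploit the rotational equivariance of the equation $V=-\kappa+A$ together with short-time uniqueness for smooth solutions. Since the outer normal velocity $V$, the mean curvature $\kappa$, and the outer unit normal are all intrinsic geometric quantities, the flow $V=-\kappa+A$ is invariant under every isometry of $\mathbb{R}^{n+1}$. In particular, for any rotation $R$ of $\mathbb{R}^{n+1}$ that fixes the $x$-axis pointwise, the family $\{R\Gamma(t)\}_{t\in[0,T)}$ is again a smooth solution of $V=-\kappa+A$, and its initial datum is $R\Gamma(0)=\Gamma(0)$, the last equality using the assumption that $\Gamma(0)$ is obtained by rotating the graph of a function about the $x$-axis.

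Next I would invoke short-time uniqueness for smooth solutions. Parametrize the smooth family $\Gamma(t)$ near $t=0$ as a normal graph $F(\cdot,t)$ over the initial hypersurface $\Gamma(0)$; the geometric evolution becomes a quasilinear strictly parabolic PDE for $F$ with smooth initial data, and classical parabolic theory gives uniqueness on some interval $[0,\varepsilon)$. Hence $R\Gamma(t)=\Gamma(t)$ on $[0,\varepsilon)$ for every rotation $R$ about the $x$-axis. To extend this to all of $[0,T)$, let $\tau=\sup\{t_0\in[0,T):R\Gamma(s)=\Gamma(s)\text{ for }0\le s\le t_0\}$. Continuity of the smooth flow gives $R\Gamma(\tau)=\Gamma(\tau)$, so if $\tau<T$ we may apply the short-time uniqueness with $\tau$ as the new initial time and obtain equality on $[\tau,\tau+\varepsilon')$, contradicting maximality. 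Therefore $\tau=T$, and $R\Gamma(t)=\Gamma(t)$ throughout $[0,T)$.

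Since this holds for every rotation $R$ about the $x$-axis, $\Gamma(t)$ is itself rotationally symmetric about the $x$-axis for every $t\in[0,T)$. The main obstacle is the short-time uniqueness step: one has to set up the normal-graph parametrization carefully, verify strict parabolicity of the induced PDE, and handle the fact that two a priori different smooth families must first be brought into a common coordinate system before uniqueness can be applied. Once this is in place, the symmetry conclusion follows essentially for free, which is why the statement is presented in the excerpt with the proof deferred to \cite{Z}.
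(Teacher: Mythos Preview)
Your argument correctly establishes that $\Gamma(t)$ is invariant under every rotation about the $x$-axis, but that is not the full content of the theorem. The phrase ``the same symmetry as $\Gamma(0)$'' here means that $\Gamma(t)$ is again obtained by rotating the graph of a \emph{function} about the $x$-axis, i.e.\ that the profile curve in the half-plane $\{r\ge 0\}$ remains a graph $r=u(x,t)$ over an $x$-interval. This is how the result is used throughout the paper (for instance, to write $\Lambda^+(t)=\{|y|=v(x,t)\}$ in the introduction), and it is why the paper says the proof uses the intersection number principle, Theorem~\ref{thm:intersection1}. Mere rotational invariance does not imply the graph property: a rotationally symmetric torus is smooth, compact, embedded, and axially symmetric, yet its profile is not the graph of a single-valued function of $x$.

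So the genuine gap is the missing step from ``$\Gamma(t)$ is $O(n)$-invariant'' to ``the profile of $\Gamma(t)$ is a graph over $x$.'' Your equivariance-plus-uniqueness argument handles the first part cleanly and is indeed standard; for the second part one needs a different mechanism. The route indicated in \cite{Z} (and alluded to here just before Theorem~\ref{thm:grad}) is a Sturmian/intersection-number argument: one compares the evolving profile with suitable one-parameter families of comparison solutions (translated hyperplanes $\{x=c\pm At\}$, spheres, or evolving cylinders) and uses nonincrease of intersection number to rule out the development of an interior vertical tangent in the profile curve, which is what would have to occur at the first time the graph property fails. Without this (or an equivalent argument bounding $|u_x|$ away from infinity in the interior), the conclusion you need does not follow from rotational symmetry alone.
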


In our problem, the hypersurface evolving by $V=-\kappa+A$ maybe intersect itself at $x$-axis. To conquer this difficulty, we refer to the definition of $\alpha$-domain in \cite{AAG}.

\begin{defn}\label{def:alphad} We say a domain $U$ is an $\alpha$-domain if

(1). $U\subset \mathbb{R}^{n+1}$ is an open set of the form
$$
U=\{(x,y)\in\mathbb{R}\times\mathbb{R}^n\mid r<u(x)\}.
$$

(2). $I=\{x\in\mathbb{R}\mid u(x)>0 \}$ is a bounded, connected interval. Let the endpoints of $I$ be $a_1<a_2$.

(3). $u$ is smooth on $I$;

(4). $\partial U$ intersects each cylinder $\partial C_{\rho}$ with $0<\rho\leq\alpha$ twice and these intersections are transverse, where $C_{\rho}=\{(x,y)\in\mathbb{R}^{n+1}\mid r<\rho\}$.
\end{defn}

\begin{figure}[htbp]
	\begin{center}
            \includegraphics[height=5cm]{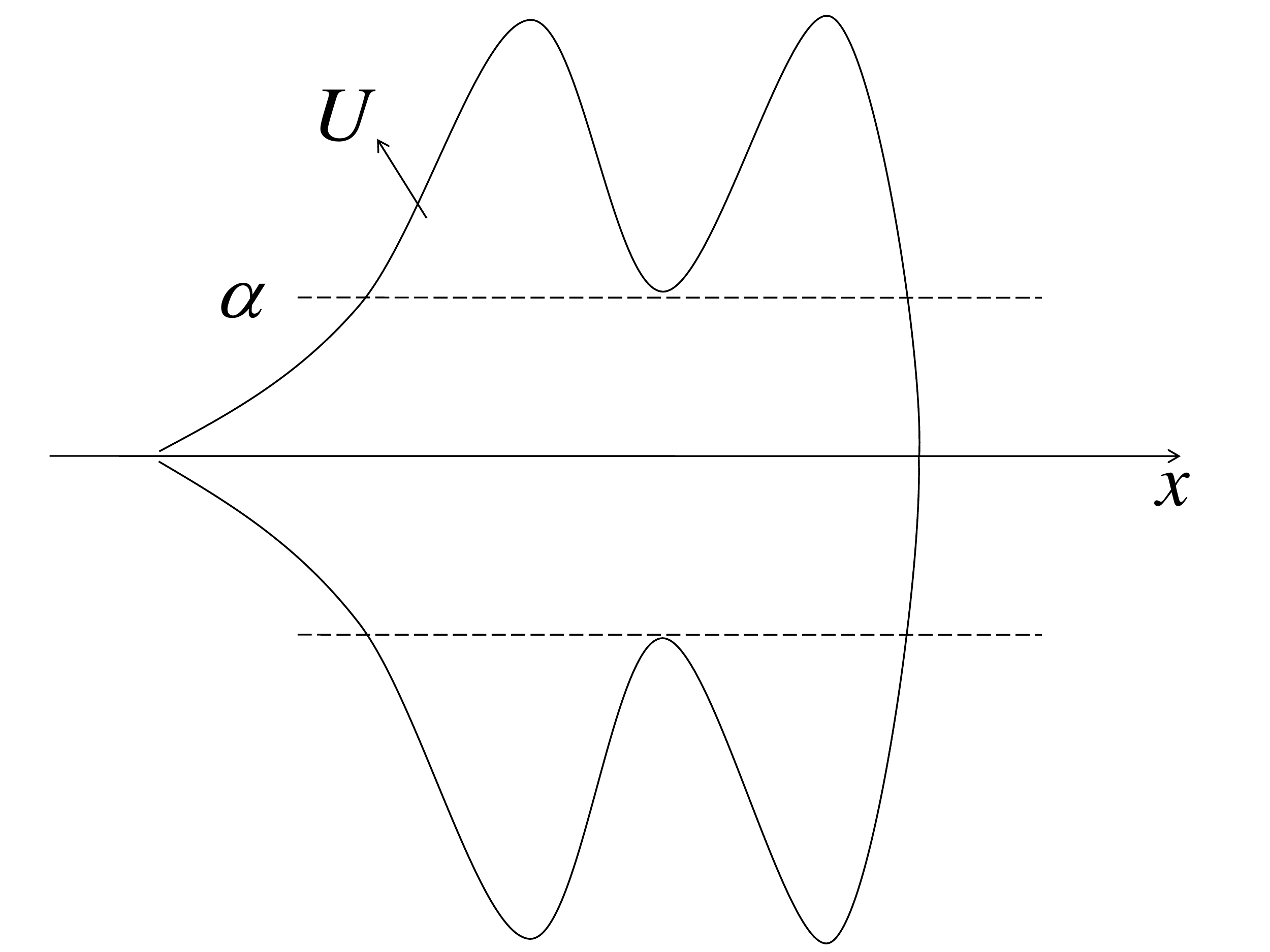}
		\vskip 0pt
		\caption{$\alpha$-domain}
        \label{fig:alphadom}
	\end{center}
\end{figure}
We observe that the boundary $\partial U$ of an $\alpha$-domain $U$ does not intersect itself at $y=0$. The condition (3) implies $\partial U$ is a smooth curve, except possibly at its endpoints $(a_1,0),(a_2,0)$. The condition (4) implies that there exist $\delta_1,\delta_2>0$ such that
$$
u(a_1+\delta_1)=u(a_2-\delta_2)=\alpha,
$$
and
$$
u^{\prime}(x)=\left\{
\begin{array}{lcl}
>0,\ x\in(a_1,a_1+\delta_1],\\
<0,\ x\in[a_2-\delta_2,a_2).
\end{array}
\right.
$$
Therefore, the inverse of $u|_{[a_1,a_1+\delta_1]}$ and $u|_{[a_2-\delta_2,a_2]}$ exist, denoted by $v_1$, $v_2:[0,\alpha]\rightarrow\mathbb{R}$. By the implicit function theorem, they are smooth in $(0,\alpha]$. Moreover, $v_1^{\prime}(r)>0$, $v_2^{\prime}(r)<0$, $(0<r\leq\alpha)$ and
$$
\partial U\cap C_{\alpha}=\{(x,y)\in\mathbb{R}^{n+1}\mid0\leq r\leq\alpha,\ x=v_i(r),\ i=1,2\}.
$$
The two components of $\partial U\cap C_{\alpha}$ are called the left and right caps of $\partial U$.

Since $U$ is an $\alpha$-domain, by Theorem \ref{thm:partialUmeancurvature}, there exists $T_U$ such that $\partial D(t)$ is smooth and $D(t)=\{(x,y)\in\mathbb{R}^2\mid |y|<u(x,t), a(t)< x < b(t)\}$ for $0<t<T_U$. Here $(u,a,b)$ satisfies
\begin{equation}\label{eq:2graph}
u_t=\frac{u_{xx}}{1+u_x^2}-\frac{n-1}{u}+A\sqrt{1+u_x^2},\ a(t)<x<b(t),\ 0<t<T_U,
\end{equation}
\begin{equation}\label{eq:2bounday}
u(b(t),t)=u(a(t),t)=0,\ u_x(a(t),t)=-u_x(b(t),t)=\infty,\ 0<t<T_U,
\end{equation}
\begin{equation}\label{eq:2innerpositive}
u(x,t)>0,\ a(t)< x< b(t),\ 0<t<T_U.
\end{equation} 

\begin{lem}\label{lem:alphadn}
For $n\geq2$, there exists a $t_U^{\alpha}>0$ such that $D(t)$ is an $\alpha(t)$-domain for all $0<t<\min\{t_U^{\alpha},T_U\}$, where $\alpha(t)$ is the solution of the following equation
\begin{equation}\label{eq:cylinder}
\alpha^{\prime}(t)=A-\frac{n-1}{\alpha(t)}
\end{equation}
with initial data $\alpha(0)=\alpha$.
\end{lem}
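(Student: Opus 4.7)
The plan is to verify the four conditions of Definition \ref{def:alphad} for $D(t)$ with radius $\alpha(t)$. Conditions (1)--(3) are immediate from the standing hypothesis that $(u,a,b)$ solves the free boundary problem (\ref{eq:2graph})--(\ref{eq:2innerpositive}) together with standard interior parabolic regularity; the entire content of the lemma is therefore condition (4): for each $\rho\in(0,\alpha(t)]$, the cylinder $\partial C_\rho$ meets $\partial D(t)$ transversely in exactly two points.

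The decisive observation, valid precisely because $n\geq 2$, is that a round cylinder of radius $\rho$ in $\mathbb{R}^{n+1}$ has mean curvature $(n-1)/\rho$, so the homothetically evolving cylinder $\partial C_{\rho(t)}$ with $\rho'(t)=A-(n-1)/\rho(t)$ is an exact smooth rotationally symmetric solution of $V=-\kappa+A$. Thus $\alpha(t)$ is just the radius profile of such an evolving cylinder with initial radius $\alpha$, and since $\alpha(0)=\alpha>0$ the ODE has a unique smooth positive solution on some maximal interval $[0,T_\alpha)$. More generally, for each $\rho_0\in(0,\alpha]$ the same ODE defines an evolving cylinder $\partial C_{\rho_0(\cdot)}$ with positive lifespan $\tau(\rho_0)>0$.

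The intersection-number principle of Theorem \ref{thm:intersection1} will then be applied to the pair $\partial D(s)$ and $\partial C_{\rho_0(s)}$, the cylinder being first truncated to a large compact, smooth, rotationally symmetric cap that contains all $\partial D(s)$ under consideration (the truncation is invisible to the local intersection count). By condition (4) of the $\alpha$-domain hypothesis on $U$, the initial intersection $\partial U\cap \partial C_{\rho_0}$ consists of exactly two transverse points, both in the smooth interior of $\partial U$, so Theorem \ref{thm:intersection1}(a) gives $\mathcal{Z}(\partial D(s),\partial C_{\rho_0(s)})\leq 2$ for $s>0$, while smooth dependence of the forward evolution near two simple transverse intersection points guarantees that they persist for a short positive time, yielding the reverse inequality together with transversality. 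Finally, for $t>0$ sufficiently small, the flow map $\rho_0\mapsto\rho_0(t)$ is continuous and strictly monotone on $\{\rho_0\in(0,\alpha]:\tau(\rho_0)>t\}$ and has image $(0,\alpha(t)]$, so the conclusion transfers to an arbitrary $\rho\in(0,\alpha(t)]$.

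The main obstacle will be making the persistence of transverse intersections uniform in $\rho_0\in(0,\alpha]$: as $\rho_0\downarrow 0$, the two points of $\partial U\cap \partial C_{\rho_0}$ slide into the cap tips $(a_1,0),(a_2,0)$, where $u(\cdot,0)$ has vertical tangent and a naive implicit function argument based on the horizontal graph breaks down. The remedy is to work in the vertical graph representation $x=v_1(r),\;x=v_2(r)$ on the caps (smooth up to $r=0$ by the $\alpha$-domain hypothesis), and to exploit that thin cylinders have very short lifespan $\tau(\rho_0)=O(\rho_0^{\,2})$, so for small $\rho_0$ the intersection-number argument only has to run on a correspondingly tiny time interval, on which the vertical graph equation for $v^\pm(r,t)$ provides the requisite uniform smoothness and yields a uniform $t_U^\alpha>0$.
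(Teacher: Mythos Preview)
Your upper-bound step ($\mathcal{Z}\le 2$) is the paper's idea: compare $\partial D(\cdot)$ with an evolving cylinder and use non-increase of the intersection number. The paper streamlines two of your technical choices. First, rather than starting the cylinder at $\rho_0\in(0,\alpha]$ and running forward (which forces you to track the extinction time $\tau(\rho_0)$ and then argue that the forward flow map is onto $(0,\alpha(t)]$), the paper fixes $t_0$ and $\rho\in(0,\alpha(t_0)]$ and runs the cylinder ODE \emph{backward} to time $0$: backward solutions of $\alpha'=A-(n-1)/\alpha$ exist for all negative time and stay positive, and ODE comparison gives the value $\rho(-t_0)<\alpha$ at $D$-time $0$. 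Second, instead of truncating the cylinder so as to invoke Theorem~\ref{thm:intersection1}, the paper applies Angenent's scalar zero-number theorem directly to $u(\cdot,s)-\rho(s-t_0)$ on the moving interval $[a(s),b(s)]$, using $\rho(s-t_0)>0=u(a(s),s)=u(b(s),s)$ to rule out zeros entering through the endpoints.

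The genuine divergence is the lower bound ($\mathcal{Z}\ge 2$). You argue by short-time persistence of the two initial transverse crossings and then patch uniformity as $\rho_0\downarrow 0$ via vertical cap graphs. The paper bypasses all of this with a single barrier: take a small ball $B_\epsilon(P)\subset U\setminus\overline{C}_\alpha$ and let its radius evolve by $\epsilon'=-A-n/\epsilon$. Then $B_{\epsilon(t)}(P)\subset D(t)$ by Theorem~\ref{thm:order}, while $B_{\epsilon(t)}(P)\cap\overline{C}_{\alpha(t)}=\emptyset$ by Theorem~\ref{thm:sep} (disjoint $+A$ and $-A$ evolutions stay disjoint). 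Hence $D(t)$ always protrudes beyond $C_{\alpha(t)}$, so every $\partial C_\rho$ with $\rho\le\alpha(t)$ meets $\partial D(t)$ at least twice, and $t_U^\alpha$ is simply the minimum of the lifespans of $\epsilon(t)$ and $\alpha(t)$ --- no $\rho_0$-uniformity is needed. Your persistence route can in principle be completed, but it is considerably longer, and the vertical-graph patch tacitly uses that $u(\cdot,s)$ stays monotone near the endpoints (i.e., that the caps persist as graphs over $r$), which is part of the $\alpha(t)$-domain conclusion itself; extra care is needed to break that near-circularity, whereas the ball barrier avoids it entirely.
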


For the problem,
$$
\left\{
\begin{array}{lcl}
\alpha^{\prime}(t)=A-\dis{\frac{n-1}{\alpha(t)}},\ t>0,\\
\alpha(0)=\alpha,
\end{array}
\right.
$$
the following hold.

(1) when $\alpha<(n-1)/A$, there exists $T_{\alpha}<\infty$ such that $\alpha(t)\downarrow 0$ as $t\rightarrow T_{\alpha}$ and $\lim\limits_{t\rightarrow -\infty}\alpha(t)=(n-1)/A $; 

(2) when $\alpha=(n-1)/A$, $\alpha(t)=(n-1)/A$ for $0\leq t<\infty$;

(3) when $\alpha>(n-1)/A$, $\alpha(t)\uparrow\infty$ as $t\rightarrow\infty$ and $\lim\limits_{t\rightarrow -\infty}\alpha(t)=(n-1)/A $. 

\begin{proof} Since $U$ is not contained in the cylinder $\overline{C}_{\alpha}$, there is a small ball $B_{\epsilon}(P)\subset U\setminus\overline{C}_{\alpha}$. By (1) in Theorem \ref{thm:order}, $D(t)$ contains the ball $B_{\epsilon(t)}(P)$ for $0<t<\delta_1$, where $\epsilon(t)$ is the solution of the following equation
\begin{equation}\label{eq:invercircle}
\epsilon^{\prime}(t)=-A-\frac{n}{\epsilon(t)},\ 0<t<\delta_1
\end{equation}
with initial data $\epsilon(0)=\epsilon$ and $\delta_1$ is the maximal time for the solution existing.

Let $\delta_2$ denote the maximal existence time of the solution for equation (\ref{eq:cylinder}) with initial data $\alpha(0)=\alpha$.

Theorem \ref{thm:sep} implies $B_{\epsilon(t)}(P)\cap\overline{C}_{\alpha(t)}=\emptyset$ for $0<t<t_U^{\alpha}$. Here
$$
t_U^{\alpha}=\min\{\delta_1,\delta_2\}.
$$

Fix $0<\rho<\alpha(t_0)$ and $0<t_0<\min\{t_U^{\alpha},T_U\}$, and let $\rho(t)$ be the solution of the equation (\ref{eq:cylinder}) with initial data $\rho(0)=\rho$. Next we prove that $\partial C_{\rho}$ intersects $\partial D(t_0)$ at most twice.

By comparison principle for ordinary differential equation, $\dis{\rho(-t_0)<\alpha}$ holds. Therefore, $y=\rho(-t_0)$ intersects $y=u(x,0)$ only twice. Noting $\rho(t-t_0)>0$ for $t\geq 0$, 
$$
y=\rho(t-t_0)>u(a(t),t)=0,\ y=\rho(t-t_0)>u(b(t),t)=0\ \text{for}\ t>0.
$$
By Theorem D in \cite{A1}, $\mathcal{Z}_{[a(t),b(t)]}(\rho(t-t_0),u(\cdot,t))$ is not increasing for $t>0$. Here $\mathcal{Z}_I(u_1(\cdot,t),u_2(\cdot,t))$ denotes the intersection number between $u_1$ and $u_2$ in $I$.  

Therefore,  
$$
\mathcal{Z}_{[a(t),b(t)]}(\rho(t-t_0),u(\cdot,t))\leq  \mathcal{Z}_{[a(0),b(0)]}(\rho(-t_0),u(\cdot,0))=2.
$$

This means that $\partial C_{\rho}$ intersects $\partial D(t_0)$ at most twice.

On the other hand, by the choice of $t_{U}^{\alpha}$, $D(t)$ contains the ball $B_{\epsilon(t)}(P)$. Since $B_{\epsilon(t)}(P)$ lies outside of the cylinder $C_{\alpha(t)}$, each $\partial C_{\rho}$ must intersect $\partial D(t)$ at least twice for any $0<t<\min\{t_{U}^{\alpha},T_U\}$.

Therefore $\partial C_{\rho}$ intersects $\partial D(t_0)$ exactly twice. Consequently, $D(t)$ is an $\alpha(t)$-domain for all $0<t<\min\{t_{U}^{\alpha},T_U\}$.
\end{proof}

\begin{prop}\label{pro:sin} For $t_U^{\alpha}$ and $T_U$ given in Lemma \ref{lem:alphadn} holds, $t_U^{\alpha}\leq T_U$.
\end{prop}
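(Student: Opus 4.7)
The plan is to argue by contradiction: assume $T_U<t_U^{\alpha}$, and show that $\partial D(t)$ extends smoothly up to and past $t=T_U$, contradicting the maximality of $T_U$ as the smooth existence time. The reduction is to derive uniform $C^{\infty}$ estimates on $\partial D(t)$ for $t\in[0,T_U]$, and then invoke the short-time smooth existence theorem (Theorem~\ref{thm:partialUmeancurvature}) starting from the limit surface $\partial D(T_U)$.

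The starting input is Lemma~\ref{lem:alphadn}: for every $t\in[0,T_U)$, the set $D(t)$ is an $\alpha(t)$-domain. Since $T_U<t_U^{\alpha}\le\delta_2$ and $\alpha(\cdot)$ is continuous on the compact interval $[0,T_U]$, there is a constant $\alpha_{\ast}>0$ with $\alpha(t)\ge\alpha_{\ast}$ for all $t\in[0,T_U]$. Thus $\partial D(t)$ decomposes naturally into a middle part lying outside the cylinder $C_{\alpha_{\ast}}$ and two caps lying inside it.

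For the middle part, the horizontal graph $r=u(x,t)$ satisfies $u\ge\alpha_{\ast}$, and Theorem~\ref{thm:grad} yields a uniform gradient bound $|u_x|\le\sigma(T_U,u)$ on this region. Feeding this into Corollary~\ref{cor:es} and Remark~\ref{rem:hes} gives uniform $C^{k}$ bounds on the set $\{u\ge 2\alpha_{\ast}\}$ for all $t\in[0,T_U]$. For the annular portions of the caps where $\alpha_{\ast}/2\le r\le\alpha_{\ast}$, the vertical graphs $x=v_i(r,t)$ solve the uniformly parabolic equations (\ref{eq:1vertical+}) or (\ref{eq:1vertical-}); since $r$ is bounded away from $0$ and from above, standard interior parabolic Schauder estimates produce uniform $C^{k}$ bounds there as well.

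The remaining issue is the tip region of each cap, i.e.\ the neighborhood of the endpoints $(a(t),0)$ and $(b(t),0)$. Here the vertical graph equation degenerates at $r=0$, so one cannot apply the interior estimates above directly. To handle this, I would switch to the level set function $\psi$: by Theorem~\ref{thm:openevolutionmeancurvature}, in a spatial neighborhood of each endpoint (after a rotation of coordinates making $\partial D(t_0)$ a graph over a hyperplane transverse to the $x$-axis), $\partial D(t)\cap U$ is the graph of a smooth function $g$ solving the graph equation, with smoothness controlled by the $C^0$ bounds on $\psi$ and the fact that the horizontal gradient bounds obtained on the annular portion of each cap serve as regular boundary data. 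This yields uniform $C^{k}$ estimates in the tip region as well. The main obstacle I expect is precisely this tip analysis: patching the vertical-graph estimates in the annulus $\{\alpha_{\ast}/2\le r\le\alpha_{\ast}\}$ with the rotated-graph estimates near $r=0$ to obtain genuine smoothness of $\partial D(T_U)$ as an $\alpha(T_U)$-domain, so that Theorem~\ref{thm:partialUmeancurvature} can be reapplied and the flow extended past $T_U$.
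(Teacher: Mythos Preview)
Your overall contradiction strategy and the decomposition into a middle part and two caps is exactly what the paper does. The gap is in your tip analysis: you treat the degeneracy of the vertical graph equation at $r=0$ as a genuine obstacle requiring a rotation and an appeal to the level set function, but this is a self-inflicted difficulty. The cap of $\partial D(t)$ is already a graph $x=w_i(y,t)$ over the \emph{full} variable $y\in\mathbb{R}^n$ (not just $r=|y|$), and in these coordinates $w_i$ satisfies the ordinary graph equation~(\ref{eq:graph}), which is uniformly parabolic with smooth coefficients and has no singularity at $y=0$. Corollary~\ref{cor:es} and Remark~\ref{rem:hes} then apply directly on the solid ball $\{|y|\le\rho/2\}$ and give uniform $C^k$ bounds on $w_i$ all the way through $y=0$. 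This is precisely the content of Lemma~\ref{lem:sing1} and Step~1 of the paper's proof; no rotation and no recourse to $\psi$ is needed.

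A secondary point you gloss over is the matching between the two regions. The interior estimate of Remark~\ref{rem:hes} loses a strip of the domain, so one has to know that the horizontal-graph region at height $\rho/2$ sits \emph{strictly} inside the region at height $\rho/4$, uniformly as $t\uparrow T_U$. The paper secures this via Lemma~\ref{lem:sing1}, which uses the strong maximum principle for $\partial v_i/\partial r$ to obtain the strict inequalities $v_1(\rho/4,T_U)<v_1(\rho/2,T_U)$ and $v_2(\rho/4,T_U)>v_2(\rho/2,T_U)$ at the limiting time. Your sketch does not address this overlap issue.
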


To prove this proposition, we need the following lemma.

\begin{lem}\label{lem:sing1} Assume that $D(t)=\{(x,y)\mid |y|<u(x,t), a(t)\leq x\leq b(t)\}$ is a $\rho$-domain for $0<t<T$. Let  $w_1<w_2$ such that
$$
C_\rho\cap\partial D(t)=\{(x,y)\mid x=w_1(y,t)\ or\ x=w_2(y,t)\}.
$$
 Then
$$\lim\limits_{t\rightarrow T}w_1(y,t)=w_1(y,T)\ \ \ \ \ \textrm{and} \ \ \ \ \lim\limits_{t\rightarrow T}w_2(y,t)=w_2(y,T)$$
exist and these convergences are uniformly convergent for $|y|\leq\frac{\rho}{2}$. Moreover, $v_1(r_1,T)<v_1(r_2,T)$ and $v_2(r_1,T)>v_2(r_2,T)$ for $0<r_1<r_2<\frac{\rho}{2}$,
where $v_1(r,t)=w_1(y,t)$ and $v_2(r,t)=w_2(y,t)$.
\end{lem}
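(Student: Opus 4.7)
The plan is to parametrise the two caps $\partial D(t)\cap C_\rho$ as smooth vertical graphs $x=v_i(|y|,t)$, establish uniform $C^k$ bounds on these graphs over $(0,T)$, pass to a uniform limit as $t\to T$, and invoke the strong minimum principle on $\partial_r v_i$ to preserve strict monotonicity. Since $D(t)$ is a $\rho$-domain for $0<t<T$, Theorems \ref{thm:gu} and \ref{thm:openevolutionmeancurvature} guarantee that $\partial D(t)\cap C_\rho$ consists of two smooth rotationally symmetric components, the left cap $x=v_1(r,t)$ and the right cap $x=v_2(r,t)$, with $\partial_r v_1>0$ and $\partial_r v_2<0$ on $0<r\le\rho$. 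Because the outward normal has negative $x$-component on the left cap and positive $x$-component on the right cap, $v_1$ solves (\ref{eq:1vertical-}) and $v_2$ solves (\ref{eq:1vertical+}) on $\{0<r<\rho\}\times(0,T)$.

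I would then derive uniform estimates up to $t=T$. Comparison with a large bounding ball evolving by $V=-\kappa+A$ (Theorem \ref{thm:order}) confines $D(t)$ to a fixed bounded set for $t\in[0,T)$, so $\sup|v_i|\le M$ for some $M$. Fixing $r_0\in(0,\rho/2)$ and $t_0\in(0,T)$, and applying Theorem \ref{thm:es}, Corollary \ref{cor:es} and Remark \ref{rem:hes} to (\ref{eq:1vertical-})--(\ref{eq:1vertical+}) on $\{r\ge r_0\}$ (where $(n-1)/r$ is bounded) produces uniform $C^k$ bounds on $v_i$ in $[r_0,\rho]\times[t_0,T)$. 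To extend the bounds up to $r=0$, I observe that near each endpoint $(v_i(0,t),0)$ the hypersurface $\partial D(t)$ is a smooth cap of uniformly bounded diameter; after a suitable rotation of the ambient coordinates it is a local graph over a horizontal hyperplane satisfying a uniformly parabolic graph equation, and standard interior regularity combined with rotational symmetry gives uniform $C^k$ bounds on $v_i$ at $r=0$ as well. Feeding the spatial bounds back into (\ref{eq:1vertical-})--(\ref{eq:1vertical+}) controls $\partial_t v_i$, so $v_i(r,\cdot)$ is uniformly Lipschitz in $t$ on $[0,\rho/2]\times[t_0,T)$; hence $v_i(r,t)$ converges uniformly in $r\in[0,\rho/2]$ to a smooth limit $v_i(r,T)$ as $t\to T$, which gives the first conclusion.

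Passing $\partial_r v_1>0$ and $\partial_r v_2<0$ to the uniform limit yields only weak inequalities, so I would upgrade to strict monotonicity via the strong minimum principle. Differentiating (\ref{eq:1vertical-}) in $r$ and setting $V:=\partial_r v_1$ gives
\begin{equation*}
V_t=\frac{V_{rr}}{1+V^2}+\Big(\frac{n-1}{r}-\frac{2VV_r}{(1+V^2)^2}-\frac{AV}{\sqrt{1+V^2}}\Big)V_r-\frac{n-1}{r^2}V,
\end{equation*}
which on $[r_1/2,\rho]\times[t_0,T]$ is a linear parabolic equation with uniformly bounded coefficients (thanks to the $C^k$ bounds and $r\ge r_1/2>0$). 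Since $V>0$ there for $t<T$ and $V(\cdot,T)\ge 0$ by uniform convergence, the strong minimum principle rules out $V(r_*,T)=0$ at any interior $r_*\in[r_1,\rho/2]$: such a zero would propagate backward in time to force $V\equiv 0$ on a parabolic neighbourhood of $(r_*,T)$, contradicting $V>0$ for $t<T$. Integrating $V(\cdot,T)>0$ from $r_1$ to $r_2$ then gives $v_1(r_1,T)<v_1(r_2,T)$, and the same argument applied to $-\partial_r v_2$ gives $v_2(r_1,T)>v_2(r_2,T)$.

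The main obstacle is the degeneracy of the vertical graph equation at $r=0$ caused by the $(n-1)/r$ coefficient, which prevents a direct application of both the interior estimates and the strong minimum principle up to the axis. This is circumvented for the estimates by exploiting the ambient smoothness of $\partial D(t)$ after a coordinate rotation, and for the minimum principle by noting that the lemma only asks for strict monotonicity on $0<r<\rho/2$, so it suffices to work on strips bounded away from the axis.
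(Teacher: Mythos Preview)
Your argument is correct, and the strong minimum principle step for the monotonicity matches the paper's. However, you have manufactured your ``main obstacle'' by choosing to work with the radial profiles $v_i(r,t)$, which satisfy the degenerate equations (\ref{eq:1vertical-})--(\ref{eq:1vertical+}) carrying the singular coefficient $(n-1)/r$.

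The paper bypasses this entirely for the convergence part by working with $w_i(y,t)$ as functions of the \emph{full} variable $y\in\mathbb{R}^n$, not of $r=|y|$. Viewed this way, the caps are graphs over the $y$-hyperplane satisfying the standard graph equation (\ref{eq:graph}) (for the ``$\mp$'' signs respectively), which is uniformly parabolic right through the axis $y=0$. Corollary \ref{cor:es} and Remark \ref{rem:hes} then apply directly on $\{|y|\le\rho/2\}\times[T/2,T)$ with no separate treatment near the axis, giving uniform $C^2$ bounds---and hence bounded $\partial_t w_i$ and uniform convergence---in one stroke. Your ``rotation of the ambient coordinates'' near the endpoints is in fact precisely this observation (the cap is already a graph over the $y$-hyperplane, no rotation needed); you were one step from seeing that this representation works on all of $|y|\le\rho$, not just near the tip. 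Using $w_i(y,t)$ globally makes the estimate portion of the proof much shorter.

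For the strict monotonicity at $t=T$, both you and the paper switch to the radial variable, differentiate (\ref{eq:1vertical-}) in $r$ to get a linear equation for $\partial_r v_1$, and invoke the strong maximum principle on intervals bounded away from $r=0$; there is no substantive difference in this part.
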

\begin{proof} $w_1(y,t)$ and $w_2(y,t)$ satisfy the equation (\ref{eq:graph}), respectively for "$\mp$". We only prove the result for $w_1(y,t)$. Since $w_1$ is uniformly bounded, Corollary \ref{cor:es} and Remark \ref{rem:hes} imply that derivatives $\nabla^j_y w_1$, $j=1,2$, are uniformly bounded for $0\leq|y|\leq\frac{\rho}{2}$, $\frac{T}{2}\leq t<T$. Consequently, $\frac{\partial w_{1}}{\partial t}$ is bounded for $0\leq|y|\leq\frac{\rho}{2}$, $\frac{T}{2}\leq t<T$. So there exists $w_1(y,T)$ such that $w_1(y,t)$ converges to $w_1(y,T)$ uniformly for $0\leq|y|\leq\frac{\rho}{2}$, as $t\rightarrow T$.

Note that the following hold
$$
\frac{\partial v_1}{\partial r}(\frac{\rho}{2},t)>0,\ \frac{\partial v_1}{\partial r}(0,t)=0\ \text{for}\ 0<t<T
$$ 
and 
$$
\frac{\partial v_1}{\partial r}(r,0)>0\ \text{for}\ 0< r<\frac{\rho}{2}.
$$
In fact, differentiating (\ref{eq:1vertical-}) in $r$, 
\begin{equation}\label{eq:deriveeq}
p_t=a(r,t)p_{rr}+b(r,t)p_r+c(r,t)p,
\end{equation}
where $p=w_r$, $a(r,t)=1/(1+w_r^2)$, $b(r,t)=-2w_rw_{rr}/(1+w_r^2)^2+(n-1)/r-Aw_r/\sqrt{1+w_r^2}$, $c(r,t)=-(n-1)/r^2$. Strong maximum principle implies
 $$
\frac{\partial v_1}{\partial r}>0\ \text{for}\  0<r<\frac{\rho}{2},\ 0<t\leq T.
$$ 
Therefore $v_1(r_1,T)<v_1(r_2,T)$ for $0<r_1<r_2<\frac{\rho}{2}$. Similarly, we can prove the conclusion for $v_2$.
\end{proof}

\begin{proof}[Proof of Proposition \ref{pro:sin}.] If $T_U<t_U^{\alpha}$. By Lemma \ref{lem:alphadn}, there exists $\rho>0$ such that $D(t)$ is a $\rho$-domain for $0<t<T_U$. 

We divide $\partial D(t)$ into two parts: $\partial D(t)=(\partial D(t)\cap\{r< \rho/2\})\cup(\partial D(t)\cap\{r\geq \rho/2\})$.
\\
{\bf Step 1.} $\partial D(t)\cap\{r< \rho/2\}$

 Since $\partial D(t)$ is a $\rho$-domain, there exist $w_1<w_2$ such that $\partial D(t)\cap\{r<\rho\}=\{(x,y)\mid x=w_1(y,t), |y|<\rho\}\cup\{(x,y)\mid x=w_2(y,t), |y|<\rho\}$. By the same argument as in Lemma \ref{lem:sing1}, $\nabla^j_y w_i$, $j=1,2$, $i=1,2$, are uniformly bounded for $0\leq|y|\leq\frac{\rho}{2}$, $\frac{T_U}{2}\leq t<T_U$. Therefore, the mean curvature of $\partial D(t)\cap\{r< \rho/2\}$ is uniformly bounded for $\frac{T_U}{2}\leq t<T_U$. 
\\
{\bf Step 2.} $\partial D(t)\cap\{r\geq\rho/2\}$

Recalling $\partial D(t)=\{(x,y)\mid |y|=u(x,t),a(t)\leq x\leq b(t)\}$, by Lemma \ref{lem:sing1}, $v_1(\rho/4,T_U)<v_1(\rho/2,T_U)$ and $v_2(\rho/4,T_U)>v_2(\rho/2,T_U)$. Then for any sufficiently small $\epsilon>0$ for all $t$ close to $T_U$ there holds
\begin{equation}\label{eq:subset1}
[v_1(\rho/2,t),v_2(\rho/2,t)]\subset (v_1(\rho/4,t)+\epsilon,v_2(\rho/4,t)-\epsilon).
\end{equation}
Theorem \ref{thm:grad} shows that $u_x$ is bounded for $|y|\geq\rho/4$. i.e. $u_x$ is bounded in $[v_1(\rho/4,t),v_2(\rho/4,t)]$, $t$ close to $T_U$. Remark \ref{rem:hes} implies that $u_{xx}$ is uniformly bounded in $(v_1(\rho/4,t)+\epsilon,v_2(\rho/4,t)-\epsilon)$, $t$ close to $T_U$. 

Therefore, (\ref{eq:subset1}) shows that $u_x$ and $u_{xx}$ are uniformly bounded for $x\in[v_1(\rho/2,t),v_2(\rho/2,t)]$, $t$ close to $T_U$. Consequently, the curvature of $\partial D(t)\cap\{r\geq\rho/2\}$ is bounded for $t$ close to $T_U$. Here we show that the curvature of $\partial D(t)$ is uniformly bounded as $t\uparrow T_U$. It contradicts that $\partial D(t)$ become singular at $T_U$.
\end{proof}

\begin{rem}\label{rem:time} In Lemma \ref{lem:alphadn}, $0<t<\min\{t_{U}^{\alpha},T_U\}$ is equivalent to $0<t<t_{U}^{\alpha}$. By the choice of $t_{U}^{\alpha}$, if $U\subset W$, $t_{U}^{\alpha}\leq t_{W}^{\alpha}$.
\end{rem}

\section{Proof of Theorem \ref{thm:equal90}}

Denote $U=\{(x,y)\in \mathbb{R}^{n+1}\mid |y|<u_0(x),-b_0\leq x\leq b_0\}$. By assumption of $u_0$ in Section 1, we know that $U\cap\{x\geq0\}$ is an $\alpha$-domain with smooth boundary, for some $\alpha>0$. 

We choose vector field $X\in C^1(\mathbb{R}^{n+1}\setminus\{O\}\rightarrow\mathbb{R}^{n+1})$ such that

(i) At any $P\in \partial U$ not on the $x$-axis has $\langle X,\textbf{n}(P)\rangle>0$, $\textbf{n}(P)$ is the inward unit normal vector to $\partial U$ at $P$.

(ii) We set $X((x,y))=(0,-y/|y|)$, near $O$ and set $X=(-1,0,\cdots,0)$ near $(b,0,\cdots,0)$, $X=(1,0,\cdots,0)$ near $(-b,0,\cdots,0)$. 
\\
We note that $X$ has no definition at $O$.

Since $X\neq0 $ on $\partial U\setminus \{O\}$ and $|X|=1$ near $O$, by continuity, there exists a neighbourhood $V\supset\partial U$ such that $|X|\geq \delta>0$ for some $\delta>0$ in $V\setminus \{(0,0)\}$. 

\begin{prop}\label{pro:sigma2} For $\rho$ small enough, there exists a smooth hypersurface $\Sigma\subset V\setminus\{O\}$ with

(i) $X(P)\notin T_P\Sigma$ at all $P\in\Sigma$, i.e., $\Sigma$ is transverse to the vector field $X$;

(ii) $\Sigma=\partial U$ in $\{(x,y)\mid|y|\geq2\rho\}$;

(iii) $\Sigma\cap\{(x,y)\mid|y|\leq\rho\}$ consists of discs $\Delta_{\pm c}=\{(\pm c,y)\mid|y|\leq\rho\}$ and pipe $B_d=\{(x,y)\mid-d \leq x\leq d,|y|=\rho\}$.
\end{prop}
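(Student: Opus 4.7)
The plan is to construct $\Sigma$ by surgery on $\partial U$: keep $\Sigma=\partial U$ on $\{|y|\ge 2\rho\}$, and in the collar $\rho\le|y|\le 2\rho$ smoothly deform each of the four ``tongues'' of $\partial U\cap\{|y|\le 2\rho\}$---the two sheets approaching $O$ and the two caps at $(\pm b_0,0)$---so that inside $\{|y|\le\rho\}$ what remains is exactly the pipe $B_d$ and the flat discs $\Delta_{\pm c}$. The first step is to fix $\rho>0$ small enough that: (a) $u_0$ takes the value $r$ exactly four times on $[-b_0,b_0]$ for every $r\in(0,2\rho]$, so the smooth inverse branches $v_1,v_2\colon(0,2\rho]\to\mathbb R$ with $v_1(0)=0$ and $v_2(0)=b_0$ are well defined; (b) all of the modifications that follow lie in $V\setminus\{O\}$; (c) the modifications near $O$ lie in the neighbourhood of $O$ on which $X\equiv(0,-y/|y|)$, and the modifications near $(\pm b_0,0)$ lie in the neighbourhoods on which $X\equiv(\mp 1,0,\dots,0)$.

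With such $\rho$ fixed, set $d:=v_1(\rho)$ and $c:=v_2(\rho)$, and choose smooth cut-offs $\chi_{\mathrm{in}},\chi_{\mathrm{out}}\colon[0,\infty)\to[0,1]$ flat to all orders at both ends of their transition intervals, with $\chi_{\mathrm{in}}=1$ on $[0,d]$ and $\chi_{\mathrm{in}}=0$ on $[v_1(2\rho),\infty)$, and $\chi_{\mathrm{out}}=1$ on $[0,\rho]$ and $\chi_{\mathrm{out}}=0$ on $[2\rho,\infty)$. Define the \emph{inner piece} of $\Sigma$ as the horizontal graph
\[
|y|=f(x),\qquad f(x):=\chi_{\mathrm{in}}(|x|)\,\rho+\bigl(1-\chi_{\mathrm{in}}(|x|)\bigr)u_0(x),\qquad |x|\le v_1(2\rho),
\]
the two \emph{outer pieces} as the vertical graphs
\[
x=\phi_\pm(|y|),\qquad \phi_\pm(r):=\pm\bigl[\chi_{\mathrm{out}}(r)\,c+\bigl(1-\chi_{\mathrm{out}}(r)\bigr)v_2(r)\bigr],\qquad |y|\le 2\rho,
\]
and set $\Sigma=\partial U$ on $\{|y|\ge 2\rho\}$. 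Smoothness of each piece individually is immediate ($\chi_{\mathrm{in}}\equiv 1$ in a neighbourhood of $x=0$, so $f\equiv\rho$ there and the singularity of $u_0$ at the origin is never felt); smoothness of $\Sigma$ across the joints $|x|=v_1(2\rho)$ and $|y|=2\rho$ follows because the cut-offs are flat to all orders there, so $f$ and $\phi_\pm$ agree with $u_0$ and $v_2$ together with all their derivatives at the joints. Substituting $\chi_{\mathrm{in}}\equiv 1$ on $|x|\le d$ yields $f\equiv\rho$, so the inner piece meets $\{|y|\le\rho\}$ in exactly $B_d$; substituting $\chi_{\mathrm{out}}\equiv 1$ on $|y|\le\rho$ yields $\phi_\pm\equiv\pm c$, so each outer piece meets $\{|y|\le\rho\}$ in exactly $\Delta_{\pm c}$. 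This gives condition (iii), and condition (ii) holds by construction.

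Condition (i), transversality of $X$ to $\Sigma$, is then a direct computation piece by piece. On $\{|y|\ge 2\rho\}$ it follows from property (i) in the construction of $X$, since such points are off the $x$-axis. On the inner piece, $\Sigma$ is the zero set of $|y|^2-f(x)^2$, whose gradient is proportional to $\bigl(-f(x)f'(x),\,y\bigr)$; pairing with $X=(0,-y/|y|)$ gives $-|y|=-f(x)<0$, which is nonzero independently of $f'(x)$. On each outer piece, $\Sigma$ is the zero set of $x\mp\phi_\pm(|y|)$, whose gradient is $\bigl(1,\,\mp\phi_\pm'(|y|)y/|y|\bigr)$; pairing with $X=(\mp 1,0,\dots,0)$ gives $\mp 1$, again nonzero independently of $\phi_\pm'$. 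I expect the main obstacle to be precisely the choice of $\rho$ in the first step: $\rho$ must be taken small enough so that the whole construction sits simultaneously inside $V\setminus\{O\}$ and inside the prescribed neighbourhoods of $O$ and of $(\pm b_0,0)$, since it is this inclusion that turns the transversality check into the one-line computations just displayed. Once $\rho$ is pinned down, the rest of the argument is a standard cut-off construction together with derivative matching at the joints.
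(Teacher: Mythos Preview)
Your proof is correct and follows essentially the same surgery approach as the paper: keep $\partial U$ unchanged on $\{|y|\ge 2\rho\}$, and in the collar $\{\rho\le|y|\le 2\rho\}$ smoothly interpolate so that inside $\{|y|\le\rho\}$ only the flat discs $\Delta_{\pm c}$ and the pipe $B_d$ remain. The paper carries this out by writing down interpolating profiles $u_j$ and $w_j$ along a dyadic scale $\alpha/2^j$ and asserting that for $j$ large the resulting $\Sigma_j$ sits in $V\setminus\{O\}$ and satisfies (i)--(iii); you do the same thing with explicit cut-offs $\chi_{\mathrm{in}},\chi_{\mathrm{out}}$, which makes the derivative matching at the joints and the transversality check (via the prescribed constant values of $X$ near $O$ and near $(\pm b_0,0)$) fully transparent. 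The only real task in both proofs is the one you flag at the end---choosing $\rho$ small enough that all modified pieces lie in the regions where $X$ has its special constant form---and you handle it correctly.
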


\begin{figure}[htbp]
	\begin{center}
            \includegraphics[height=5cm]{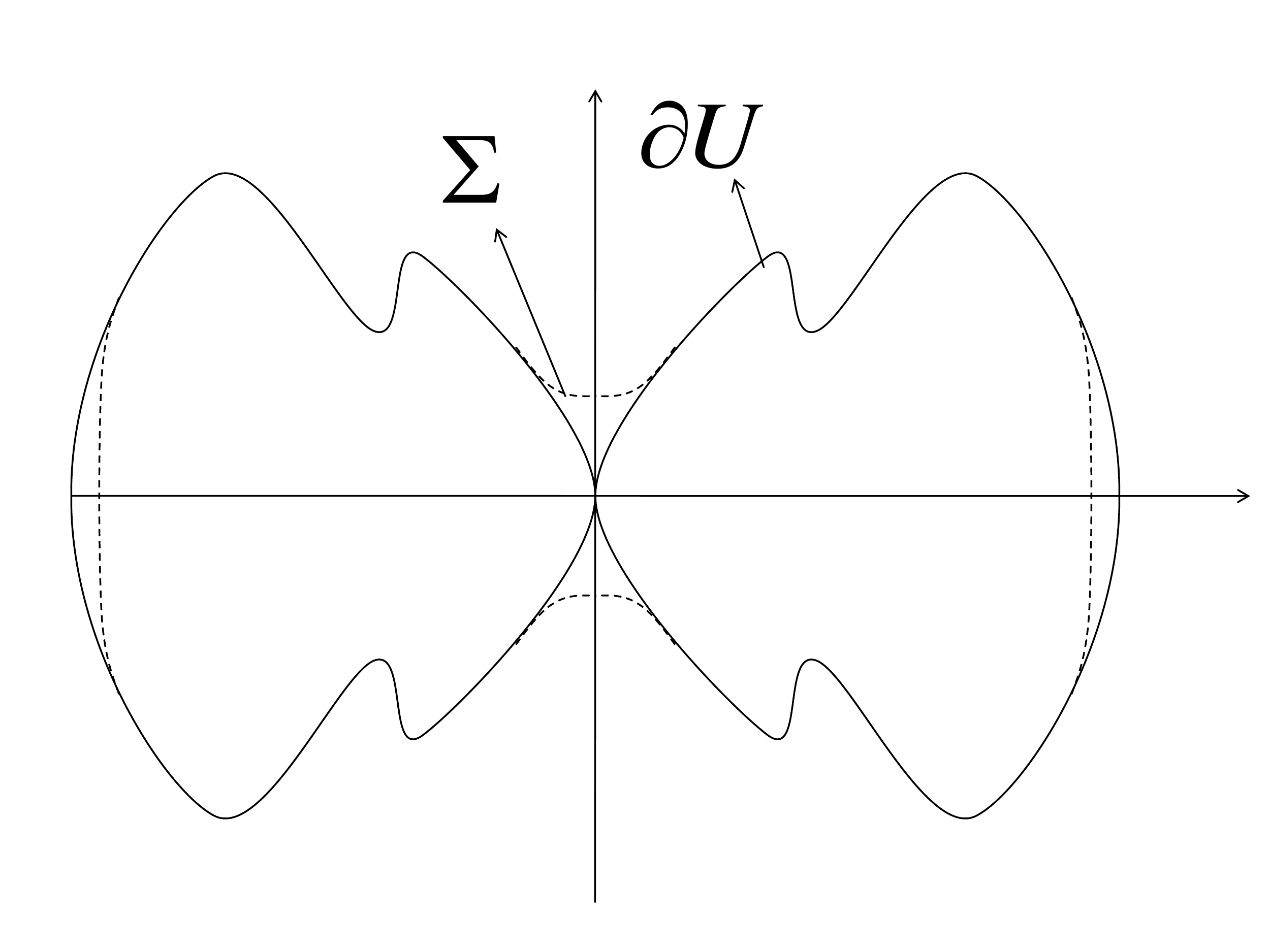}
		\vskip 0pt
		\caption{Proof of Proposition \ref{pro:sigma2}}
        \label{fig:sigma2}
	\end{center}
\end{figure}
\begin{proof} Because $U\cap\{x\geq0\}$ is an $\alpha$-domain, there exist $\delta_j$, $\gamma_j$ and $0<\delta_j<\gamma_j$ such that
$$
u_0(\delta_j)=u_0(\gamma_j)=u_0(-\delta_j)=u_0(-\gamma_j)=\frac{\alpha}{2^j}
$$
and
$$\partial U\cap C_{\alpha}=\{(x,y)\mid x=\pm v(y), |y|<\alpha\}\cup\{(x,y)\mid x=\pm w(y),|y|<\alpha\},$$
where $v,w\in C^{\infty}((-\alpha,\alpha))$ and $0<v(y)<w(y)$ for $|y|<\alpha$. Here $\delta_j$ is decreasing and $\gamma_j$ is increasing in $j$, respectively.

We let $w_j\in C^{\infty}((-\alpha/2^{j-1},\alpha/2^{j-1}))$ be a function satisfying
$$
w_j(y)=\left\{
\begin{array}{lcl}
\gamma_{j+2},\ 0\leq\dis{|y|<\frac{\alpha}{2^{j+1}}},\\
w(y),\ \dis{\frac{\alpha}{2^{j}}<|y|<\frac{\alpha}{2^{j-1}}},
\end{array}
\right.
$$
and let $u_j\in C^{\infty}((-\delta_{j-1},\delta_{j-1}))$ be a function satisfying
$$
u_j(x)=\left\{
\begin{array}{lcl}
\dis{\frac{\alpha}{2^{j+1}}},\ x\in[0,\delta_{j+2}],\\
u_0(x),\  x\in[\delta_j,\delta_{j-1}).
\end{array}
\right.
$$

Let $\Sigma_j$ consist of three parts: $\{(x,y)\mid |y|=u_j(x),\ x\in(-\delta_j,\delta_j)\}$, $\{(x,y)\mid x=\pm w_j(y), |y|<\alpha/2^j\}$ and $\partial U\cap \{|y|\geq \alpha/2^{j}\}$. It is easy to see that for sufficiently large $j$, $\Sigma_j\subset V\setminus \{(0,0)\}$ satisfies (i), (ii), (iii) for $c=\gamma_{j+2}$, $\rho=\alpha/2^{j+1}$ and $d=\delta_{j+2}$.
\end{proof}

Denote $\sigma(P,\alpha):\Sigma\times(-\delta,\delta)\rightarrow V$ ($V$ is given at the begining of this section and $\Sigma$ is given by Proposition \ref{pro:sigma2}) the flow generated by vector field $X$ in $\mathbb{R}^{n+1}$. Precisely, $\sigma(P,\alpha)$ is defined as following:
$$
\left\{
\begin{array}{lcl}
\dis{\frac{d\sigma(P,\alpha)}{d\alpha}=X(\sigma(P,\alpha))},\ P\in \Sigma,\\
\sigma(P,0)=P,\ \ \ \ \ P\in \Sigma.
\end{array}
\right.
$$

By (i) in Proposition \ref{pro:sigma2}, for any $C^{1}$ function $u:\Sigma\rightarrow\mathbb{R}$, ``the image of $u$ under $\sigma$''$:=\{\sigma(P,u(P))\mid P\in \Sigma\}$ is a $C^1$ hypersurface. Conversely, for any curve $\Gamma\subset V$ which is $C^1$ close to $\Sigma$, there exists a unique $C^1$ function $u:\Sigma\rightarrow\mathbb{R}$ such that $\Gamma=\{\sigma(P,u(P))\mid P\in \Sigma\}$. In other words, the map $\sigma(\cdot,t)$ defines new coordinates from $\Sigma$ to $V$. Therefore, if $\Gamma(t)\subset V$$(0<t<T)$ is a smooth family of smooth hypersurfaces and $C^1$ close to $\Sigma$, there exists a unique function $u \in C^\infty(\Sigma\times(0,T))$ such that $\Gamma(t)=\{\sigma(P,u(P,t))\mid P\in\Sigma\}$. Let $z=(z_1,\cdots,z_n)$ be the local coordinates on an open subset of $\Sigma$. If $\Gamma(t)$ evolves by $V=-\kappa+A$, under these coordinates $u$ satisfies the following equation
\begin{equation}\label{eq:para1}
\frac{\partial u}{\partial t}=a_{ij}(z,u,\nabla_z u)\nabla^2_{z_iz_j}u+b(z,u,\nabla_z u).
\end{equation}
Here $\{a_{ij}\}$ is a smooth, positive matrix. Precisely, we can see Section 3 in \cite{A2}. Consequently, (\ref{eq:para1}) is a parabolic equation. 

For example, $\sigma(\cdot,\alpha)$ is the flow defined as above.  We can easily deduce that 
$$
\sigma(P,\alpha)=\left\{
\begin{array}{lcl}
(x,\rho-\alpha),\ P\in B_d,\\
(-c+\alpha,y),\ P\in \Delta_{-c},\\
(c-\alpha,y),\ P\in \Delta_{c},
\end{array}
\right.
$$
where we choose the local coordinates:
\\
(1). $(x,\rho y)$ on $B_d$;
\\
(2). $(\pm c,y)$ on $\Delta_{\pm c}$. 

Suppose that $\Gamma(t)$ is symmetric to $x$-axis. Then, on $B_d$, $u$ depends only on $x$, $t$ and satisfies
\begin{equation}\label{eq:graph2}
u_t=\frac{u_{xx}}{1+u_x^2}+\frac{n-1}{\rho-u}-A\sqrt{1+u_x^2}.
\end{equation}
In this case, $b=\frac{n-1}{\rho-u}-A\sqrt{1+u_x^2}$. For $n\geq 2$, it is easy to see $b$ is not smooth at $u=\rho$. This is the most significant difference between the condition $n=1$ and condition $n\geq 2$.

On $\Delta_{\pm c}$, since $u$ depends only on $y=(y_1,\cdots,y_n)$, $u$ satisfies 
\begin{equation}\label{eq:graphminus}
u_t=\dis{\left(\delta_{ij}-\frac{u_{y_i}u_{y_j}}{1+|\nabla u|^2}\right)u_{y_iy_j}- A\sqrt{1+|\nabla u|^2}}.
\end{equation}

\begin{lem}\label{lem:max} For smooth function $v(x,t)$ on $V\times(0,T)$, where $V$ is a compact set, we denote $m(t)$ by $$
m(t)=\max\{v(x,t)\mid x\in V\}.
$$
Then there exists $P_t\in V$ such that $v(P_t,t)=m(t)$ and $m^{\prime}(t)=v_t(P_t,t)$ for $t>0$.
\end{lem}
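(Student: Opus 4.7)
The plan is to recognize this as the classical envelope theorem (Danskin's theorem) and prove it by combining one-sided inequalities. Since $V$ is compact and $v(\cdot,t)$ is continuous, the maximizer set $M(t)=\{P\in V: v(P,t)=m(t)\}$ is nonempty and compact, so the existence of some maximizer $P_t$ is automatic; the real content is identifying $m'(t)$ with $v_t(P_t,t)$ for a well-chosen $P_t$.

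First I would establish the trivial one-sided bound. For any fixed $P\in M(t)$, since $v(P,s)\le m(s)$ for all nearby $s$ while $v(P,t)=m(t)$, subtracting gives $m(s)-m(t)\ge v(P,s)-v(P,t)$. Dividing by $s-t$ and letting $s\to t^+$ (resp.\ $s\to t^-$) yields
$$\liminf_{s\to t^+}\frac{m(s)-m(t)}{s-t}\ge v_t(P,t)\ge \limsup_{s\to t^-}\frac{m(s)-m(t)}{s-t}.$$

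Next I would produce the reverse bound by a compactness argument. Pick any sequence $s_n\downarrow t$ with maximizers $P_{s_n}\in M(s_n)$. Compactness of $V$ extracts a subsequential limit $P_{s_n}\to Q$, and continuity of $v$ forces $Q\in M(t)$. Using the mean value theorem in the time variable,
$$m(s_n)-m(t)=v(P_{s_n},s_n)-v(Q,t)\le v(P_{s_n},s_n)-v(P_{s_n},t)=(s_n-t)\,v_t(P_{s_n},\xi_n)$$
for some $\xi_n\in(t,s_n)$; passing to the limit along the subsequence yields $\limsup (m(s_n)-m(t))/(s_n-t)\le v_t(Q,t)$. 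A symmetric argument with $s_n\uparrow t$ produces the matching lower bound with some (possibly different) limit maximizer. Observing that $m$ is Lipschitz (because $v$ is smooth on the compact set $V$ and hence $v_t$ is bounded), $m$ is differentiable a.e.; at any $t$ where $m'(t)$ exists, the two-sided bounds pin $m'(t)$ down to the value $v_t(P_t,t)$ at the specific maximizer $P_t$ selected by the compactness/continuity procedure.

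The main obstacle is the possible non-uniqueness of the maximizer: $v_t(\cdot,t)$ need not be constant on $M(t)$, so one cannot naively differentiate $m$ through the max. The compactness-plus-continuity step, which both selects a subsequential limit $Q\in M(t)$ and forces the one-sided derivative to equal $v_t(Q,t)$, is exactly what is needed to isolate the correct $P_t$. Everything else is routine from the smoothness of $v$ and compactness of $V$.
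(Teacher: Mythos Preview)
The paper does not prove this lemma; it simply cites Mantegazza's lecture notes as a standard reference. Your argument is correct and self-contained, so in that sense it already goes beyond what the paper supplies.

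That said, your proof is longer than necessary and your diagnosis of the ``main obstacle'' is off. Your first inequality already finishes the job: for \emph{any} $P\in M(t)$ the bound $m(s)-m(t)\ge v(P,s)-v(P,t)$, divided by $s-t$ from the right and from the left, yields simultaneously $m'(t)\ge v_t(P,t)$ and $m'(t)\le v_t(P,t)$ at every $t$ where $m'(t)$ exists. Hence $m'(t)=v_t(P,t)$ for \emph{every} maximizer $P\in M(t)$, and in particular $v_t(\cdot,t)$ is automatically constant on $M(t)$ at points of differentiability of $m$---the opposite of what you claim in your last paragraph. Consequently the compactness selection procedure in your third step is not needed: any maximizer works as $P_t$. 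Combined with your observation that $m$ is Lipschitz (since $v_t$ is bounded on the compact $V\times[\epsilon,T-\epsilon]$) and hence differentiable a.e., the two-line argument from your first inequality is the entire proof. This is exactly Hamilton's trick as presented in the reference the paper cites.
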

This is a well known result, for example, seeing \cite{M}.

\begin{prop}\label{pro:uniq2} For $n\geq2$, let $\Gamma_j(t)$, $t\in[0,T]$ be two families of hypersurfaces with $\sigma^{-1}(\Gamma_j(t))$ the graph of $u_j(\cdot,t)$ for certain $u_j\in C(\Sigma\times[0,T])$, $j=1,2$. Let $D_j(t)$ be bounded open domain with $\partial D_j(t)=\Gamma_j(t)$ and assume that $D_j(t)$ are $\alpha(t)$-domain, $j=1,2$. Moreover, assume that $u_j$ are smooth on $\Sigma\times(0,T]$ and smooth on $\Sigma\setminus(\Delta_{\pm c}\cup B_d)\times[0,T]$. And suppose that $\rho-u_j$ are bounded from below on $\Sigma\setminus(\Delta_{\pm c}\cup B_d)\times[0,T]$. If 

(1). $\Gamma_j(t)$ evolves by (\ref{eq:cur});

(2). $\Gamma_1(0)=\Gamma_2(0)$; 

(3). $\int_0^{T}\frac{1}{\alpha^2(t)}dt<\infty$, 
\\
then $\Gamma_1(t)=\Gamma_2(t)$ for $0<t\leq T$.
\end{prop}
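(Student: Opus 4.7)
My plan is to establish uniqueness through a maximum principle argument applied to the difference $w := u_1 - u_2$ on the compact manifold $\Sigma$. Set $m(t) := \max_{\Sigma} w(\cdot, t)$; since $w$ is continuous on $\Sigma \times [0,T]$, smooth on $\Sigma \times (0,T]$, and vanishes identically at $t = 0$, we have $m(0) = 0$ and $m$ is continuous on $[0,T]$. The goal is to show $m \leq 0$, then apply the same argument to $-w$ (equivalently, interchange the roles of $u_1$ and $u_2$) to conclude $w \equiv 0$ and hence $\Gamma_1(t) \equiv \Gamma_2(t)$.

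\textbf{Case analysis at the maximizer.} By Lemma~\ref{lem:max}, for a.e.\ $t \in (0, T]$ there exists $P_t \in \Sigma$ with $m'(t) = w_t(P_t, t)$; at such a maximizer one has $\nabla u_1(P_t, t) = \nabla u_2(P_t, t)$ and $\nabla^2 w(P_t, t) \leq 0$. I will estimate $w_t(P_t, t)$ from the difference of the two parabolic equations (\ref{eq:para1}), distinguishing three cases by which region contains $P_t$. Outside $\Delta_{\pm c} \cup B_d$, both $u_j$ are smooth up to $t = 0$ with $\rho - u_j$ uniformly bounded below, so the coefficients in (\ref{eq:para1}) are smooth in $(u, \nabla u)$ and $\nabla^2 u_2$ is uniformly bounded, yielding $w_t(P_t, t) \leq C_0\, m(t)$ for a fixed constant $C_0$ via the standard expansion. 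On $\Delta_{\pm c}$, equation (\ref{eq:graphminus}) is independent of $u$, so $a_{ij}(\nabla u_1) = a_{ij}(\nabla u_2)$ and the $-A\sqrt{1 + |\nabla u|^2}$ contributions cancel at $P_t$, leaving $w_t(P_t, t) = a_{ij}(\nabla u_1)\,\partial_{ij} w \leq 0$. On $B_d$, from (\ref{eq:graph2}) and $u_{1,x}(P_t, t) = u_{2,x}(P_t, t)$ I obtain
\begin{equation*}
w_t(P_t, t) = \frac{w_{xx}(P_t, t)}{1 + u_{1,x}(P_t,t)^2} + \frac{(n-1)\, w(P_t, t)}{(\rho - u_1(P_t,t))(\rho - u_2(P_t,t))} \leq \frac{n-1}{\alpha(t)^2}\, m(t),
\end{equation*}
using $w_{xx}(P_t, t) \leq 0$ and the $\alpha(t)$-domain lower bound $\rho - u_j \geq \alpha(t)$ on $B_d$.

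\textbf{Gronwall and conclusion.} Combining the three cases gives $m'(t) \leq \bigl(C_0 + (n-1)/\alpha(t)^2\bigr)\, m(t)$ for a.e.\ $t \in (0, T]$ (in the sense of Dini derivatives, if necessary). Hypothesis (3) makes $C_0 + (n-1)/\alpha(t)^2$ integrable on $(0, T)$, so Gronwall's inequality combined with $m(0) = 0$ forces $m(t) \leq 0$ on $[0, T]$. The symmetric argument applied to $-w$ gives $\min_\Sigma w(\cdot,t) \geq 0$, hence $w \equiv 0$ and $\Gamma_1(t) = \Gamma_2(t)$ for all $t \in [0, T]$.

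\textbf{Main obstacle.} The essential difficulty is the singular zeroth-order coefficient $(n-1)/\bigl((\rho - u_1)(\rho - u_2)\bigr)$ arising on the pipe $B_d$: its uniform bound $(n-1)/\alpha(t)^2$ generically blows up as $t \to 0^+$ (and $\alpha(t) \to 0$). It is crucial here that the principal coefficient $1/(1+u_x^2)$ on $B_d$ and all the coefficients of (\ref{eq:graphminus}) on $\Delta_{\pm c}$ are independent of $u$; this is precisely what ensures that no further $u$-dependent error (which would require $L^\infty$ control of $\nabla^2 u_j$ that we do not possess near $t = 0$) appears at the maximizer, reducing the whole question to the $\alpha^{-2}$ singularity. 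The integrability hypothesis (3) is exactly tailored to this single surviving singular term.
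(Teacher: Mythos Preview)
Your proof is correct and follows essentially the same route as the paper: define the running maximum of $u_1-u_2$, invoke Lemma~\ref{lem:max}, split into the three regions $B_d$, $\Delta_{\pm c}$, and $\Sigma\setminus(\Delta_{\pm c}\cup B_d)$ to obtain the differential inequality $M'(t)\le \bigl(C+(n-1)/\alpha(t)^2\bigr)M(t)$, and conclude via Gronwall using hypothesis~(3). Your identification of the singular zeroth-order term on $B_d$ as the sole obstruction, and of the integrability condition as exactly what neutralizes it, matches the paper's argument precisely.
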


\begin{proof} Consider function $v(P,t)=u_1(P,t)-u_2(P,t)$. From our assumptions, there holds $v\in C(\Sigma\times[0,T])$ and $v$ is smooth on $(\Sigma\setminus\Delta_{\pm c}\cup B_d)\times[0,T]$ and smooth on $\Sigma\times(0,T]$. Moreover $v(P,0)\equiv0$. We define $M(t)=\max\{v(P,t)\mid P\in \Sigma\}$. Choose $P_t$ as in Lemma \ref{lem:max} such that $M(t)=v(P_t,t)$ and $M^{\prime}(t)=v_t(P_t,t)$.

\textbf{Case 1.} $P_t\in B_d$, $u_j$ satisfy
\begin{equation}
u_t=\frac{u_{xx}}{1+u_x^2}+\frac{n-1}{\rho-u}-A\sqrt{1+u_x^2}.\tag{\ref{eq:graph2}}
\end{equation}
Obviously, $v$ satisfies the following equation
$$
v_t=a^1(x,t)v_{xx}+b^1(x,t)v_x+c^1(x,t)v,
$$
where $a^1(x,t)>0$ and $c^1(x,t)=\frac{n-1}{(\rho-u_1)(\rho-u_2)}$. Since $v$ attains its maximum at $P_t$, then
$v_x(P_t,t)=0$ and $v_{xx}(P_t,t)\leq0$. So we have $v_t(P_t,t)\leq c^1(x,t)v$. By assumption that $D_j$ are $\alpha(t)$-domain, then $\rho-u_j>\alpha(t)$, $j=1,2$. Therefore, 
$$
v_t(P_t,t)\leq\frac{n-1}{\alpha^2(t)}v(P_t,t).
$$
Consequently, $M^{\prime}(t)\leq\frac{n-1}{\alpha^2(t)}M(t)$.

\textbf{Case 2.} $P_t\in \Sigma\setminus(\Delta_{\pm c}\cup B_d)$. Then we can choose coordinates $z$ on some neighbourhood of $P_t$ on $\Sigma$ and $u_j$ satisfy (\ref{eq:para1}). We may write this equation as $u_t=F(z,t,u,\nabla u, \nabla^2 u)$. Then $v$ satisfies
$$
v_t=a^2_{ij}(z,t)v_{z_iz_j}+b^2_i(z,t)v_{z_i}+c^2(z,t)v
$$
where
$$
c^2(z,t)=\int_{0}^1F_u(z,t,u^{\theta},\nabla u^{\theta},\nabla ^2u^{\theta})d\theta,
$$
where $u^{\theta}=(1-\theta)u_0+\theta u_1$ and $\{a^2_{ij}\}$ is a positive definite.

Since $v$ is smooth on $\Sigma\setminus(\Delta_{\pm c}\cup B_d)\times[0,T]$ and $\rho-u_j$ are bounded from below on $\Sigma\setminus(\Delta_{\pm c}\cup B_d)$, $0<t<T$, then there exists a positive constant $C$ such that $|c^2(z,t)|\leq C$. The constant $C$ may depend on the choice of local coordinates $z$. By compactness of $\Sigma$, $\Sigma$ has a finite covering consisting of neighborhoods of local coordinates, and we can choose $C$ independent of the choice of local coordinates. Since $\nabla v(P_t,t)=0$, $\{v_{z_iz_j}(P_t,t)\}$ is negative semi-definite, 
$$
v_t(P_t,t)\leq c(P_t,t)v(P_t,t)\leq Cv(P_t,t).
$$
Consequently, $M^{\prime}(t)\leq CM(t)$.

\textbf{Case 3.} $P_t\in \Delta_{\pm c}$. We only consider $P_t\in \Delta_{-c}$. Then in the $z$-coordinates of $\Delta_{-c}$, $u_j$ satisfy the full graph equation
\begin{equation}
u_t=\dis{\left(\delta_{ij}-\frac{u_{y_i}u_{y_j}}{1+|\nabla u|^2}\right)u_{y_iy_j}- A\sqrt{1+|\nabla u|^2}}.\tag{\ref{eq:graphminus}}
\end{equation}
 Hence $v=u_1-u_2$ satisfies a linear parabolic equation 
$$
v_t=a^3_{ij}(z,t)v_{z_iz_j}+b^3_i(z,t)v_{z_i},
$$
where $\{a^3_{ij}\}$ is positive definite. Obviously, $ \nabla v(P_t,t)=0$ and $\{v_{z_iz_j}(P_t,t)\}$ is negative semi-definite. It follows that $M^{\prime}(t)\leq0$.

From the three cases above, if we put
$$
r(t)=\frac{n-1}{\alpha^2(t)}+C,
$$
then there holds $M^{\prime}(t)\leq r(t)M(t)$. Consequently, by the assumption of $\alpha(t)$, $$\dis{M(t)\leq M(0)e^{\int_{0}^tr(t)ds}=0}.$$

By considering $m(t)=\min\{v(P,t)\mid P\in\Sigma\}$, we can similarly prove $m(t)\geq 0$. Therefore $\Gamma_1(t)=\Gamma_2(t)$ for $0\leq t\leq T$.
\end{proof}

Note that the intial curve in our problem is singular at $x$-axis. The assumption that ``$D_j(t)$ are $\alpha(t)$ domain'' in Proposition \ref{pro:uniq2} means that $\Gamma_j(t)$ ``escape'' from origin with speed $\alpha(t)$. If the ``escape speed'' satisfies 
$$
\int_0^{T}\frac{1}{\alpha^2(t)}dt<\infty,
$$
we can get the uniqueness.

Following Lemmas \ref{lem:closeas} and \ref{lem:closebou} can be proved by similar argument to the argument in $\mathbb{R}^2$ given by \cite{Z}. For reader's convenience, we give the proof.

\begin{lem}\label{lem:closeas} There exists a sequence of closed sets $E_j$ such that $E_j^{\circ}$ are $\alpha/2^j$-domains and $E_j\downarrow \overline{U}$. Here $U$ is given at the beginning of the section and $E^{\circ}$ denotes the interior of the set $E$.
\end{lem}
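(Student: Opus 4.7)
The plan is to thicken $U$ near the singular point $x=0$ in a smoothly decreasing way. Since $U\cap\{x\geq 0\}$ is an $\alpha$-domain with smooth boundary, $u_0$ is strictly increasing on an initial interval $[0,\delta]$ from $u_0(0)=0$ to $u_0(\delta)=\alpha$, and strictly decreasing on some terminal interval $[\gamma,b_0]$ back to $u_0(b_0)=0$, with $u_0\geq \alpha$ in between. Pick a strictly decreasing sequence $\delta_j\searrow 0$ (defined for $j$ large, padded by repetition for small $j$) with $u_0(\delta_j)=\alpha\cdot 2^{-(j-1)}$, which is possible by continuity and strict monotonicity of $u_0$ on $[0,\delta]$.

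I would construct smooth even functions $u_j^*:[-b_0,b_0]\to[0,\infty)$ by setting $u_j^*\equiv u_0$ on $\{\delta_j\leq |x|\leq b_0\}$ and, on $[-\delta_j,\delta_j]$, prescribing a smooth even bridge $\phi_j$ that matches $u_0$ and all its derivatives at the splice points $x=\pm\delta_j$ (using Borel's theorem at these positive values of $x$, where $u_0$ is smooth), stays strictly above $\alpha/2^j$ throughout $[-\delta_j,\delta_j]$, and satisfies $\phi_j\leq \alpha\cdot 2^{-(j-2)}$. Ordering the construction inductively, I choose $\phi_{j+1}\leq \phi_j$ on $[-\delta_{j+1},\delta_{j+1}]$ (which is possible since the lower bound for the new bridge, $\alpha/2^{j+1}$, is strictly below the old one, $\alpha/2^j$). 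Then set $E_j:=\{(x,y)\in\mathbb{R}\times\mathbb{R}^n:|y|\leq u_j^*(x),\,-b_0\leq x\leq b_0\}$, which is closed with interior $E_j^\circ=\{|y|<u_j^*(x),\,-b_0<x<b_0\}$.

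Conditions (1)--(3) of Definition \ref{def:alphad} are immediate from the construction. For condition (4), fix $0<\rho\leq\alpha/2^j$. Since $u_j^*>\alpha/2^j\geq \rho$ on $[-\delta_j,\delta_j]$, the equation $u_j^*(x)=\rho$ has solutions only in $\{|x|\geq\delta_j\}$, where $u_j^*$ agrees with $u_0$. The $\alpha$-domain property of $U\cap\{x\geq 0\}$ gives exactly two transverse solutions of $u_0=\rho$ in $[0,b_0]$, but the one in the inner cap lies in $(0,\delta_j)$ (as $u_0$ is increasing there and $u_0(\delta_j)=\alpha/2^{j-1}>\rho$) and is thereby excluded; only the outer-cap solution in $(\delta_j,b_0)$ contributes. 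By the evenness of $u_j^*$, exactly one analogous solution arises in $(-b_0,-\delta_j)$, and transversality (i.e.\ $u_0'\ne 0$ at the crossing) is inherited directly from the $\alpha$-domain property. Thus $E_j^\circ$ is an $\alpha/2^j$-domain. Finally, $E_j\supset E_{j+1}$ follows from $u_{j+1}^*\leq u_j^*$, and $\bigcap_j E_j=\overline{U}$ follows from $u_j^*\searrow u_0$ pointwise (with values in $[-\delta_j,\delta_j]$ bounded above by $\alpha\cdot 2^{-(j-2)}\to 0$).

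The main obstacle is the construction of Step 2: producing a sequence $\{u_j^*\}$ that is \emph{simultaneously} smooth across $x=0$ (where $u_0$ itself fails to be smooth), $C^\infty$-joined to $u_0$ at the splicing points $x=\pm\delta_j$, bounded below by $\alpha/2^j$ on $[-\delta_j,\delta_j]$, and monotone decreasing in $j$. None of the individual ingredients is deep --- smoothness and matching are handled by Borel--Whitney extension together with a partition of unity, the lower bound by adding a sufficiently tall bump, and monotonicity by the inductive ordering above --- but it is the simultaneous combination that carries the technical content of the lemma. Once the sequence $\{u_j^*\}$ is produced, the $\alpha/2^j$-domain verification and the monotone convergence $E_j\downarrow\overline U$ are essentially immediate.
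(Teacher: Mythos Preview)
Your strategy is exactly the paper's: replace $u_0$ on a shrinking neighborhood $[-\delta_j,\delta_j]$ of the origin by a smooth even bridge bounded below by roughly $\alpha/2^j$, keep $u_0$ elsewhere, and verify that the resulting bodies are nested $\alpha/2^j$-domains shrinking to $\overline U$.

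There is, however, one gap in your construction. Your three conditions on the bridge $\phi_j$ (infinite-order matching at $\pm\delta_j$, $\phi_j>\alpha/2^j$, and $\phi_j\leq\alpha\cdot 2^{-(j-2)}$) do not force $\phi_j\geq u_0$ on $[-\delta_j,\delta_j]$: since $u_0$ rises monotonically to $u_0(\delta_j)=\alpha/2^{j-1}$ at the endpoints, the lower bound $\alpha/2^j$ is too weak to dominate $u_0$ near $\pm\delta_j$. Without $\phi_j\geq u_0$ both of your final claims can fail. For monotonicity, on the annulus $\delta_{j+1}\leq|x|\leq\delta_j$ you have $u_{j+1}^*=u_0$ while $u_j^*=\phi_j$, so $u_{j+1}^*\leq u_j^*$ is exactly the inequality $u_0\leq\phi_j$ there. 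For the limit, $\overline U\subset E_j$ (and hence $\bigcap_j E_j=\overline U$) requires $u_j^*\geq u_0$ everywhere. The fix is trivial: add the requirement $\phi_j\geq u_0$ on $[-\delta_j,\delta_j]$ to your list of bridge constraints (the paper imposes $v_j\geq u_0$ explicitly), and then your argument goes through and coincides with the paper's.
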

\begin{proof} We choose $\delta_j$ as in the proof of Proposition \ref{pro:sigma2}. We can construct even functions $v_j\in C^{\infty}((-b_0,b_0))$ such that 
$$
v_j(x)=\left\{
\begin{array}{lcl}
\alpha/2^j,\ x\in (-\delta_j/2,\delta_j/2),\\
u_0(x), x\in [-b_0,-\delta_j]\cup[\delta_j,b_0],
\end{array}
\right.
$$
$v_j(x)\geq u_0(x)$, $x\in[-b_0,b_0]$ and $v^{\prime}_j(x)>0$, $x\in(\delta_j/2,\delta_j)$. It is easy to see $v_j\downarrow u_0$ uniformly in $[-b_0,b_0]$ as $j\rightarrow \infty$.

Let $E_j=\{(x,y)\mid|y|\leq v_j(x),\ -b_0\leq x\leq b_0\}$. Since $v_j\downarrow u_0$ uniformly in $[-b_0,b_0]$, $E_j\downarrow \overline{U}$ as $j\rightarrow \infty$. It is easy to check $E_j^{\circ}$ are $\alpha/2^j$-domain.
\end{proof}

\begin{lem}\label{lem:closebou}  Let the same assumption of (1) in Theorem \ref{thm:equal90} be given. Then there exists $t_1>0$ such that, for all $t_2$ satisfying $0<t_2<t_1$, the second fundamental forms and derivatives of $\partial E_j(t)$ are uniformly bounded for $t_2\leq t\leq t_1$,  where $E_j(t)$ denote the closed evolution of $V=-\kappa+A$ with $E_j(0)=E_j$ and $E_j$ are chosen as in Lemma \ref{lem:closeas}.
\end{lem}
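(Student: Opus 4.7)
The plan is to exploit assumption $(A-)$ to construct a sub-barrier that keeps $\partial E_j(t)$ uniformly away from the $x$-axis near the origin, and then to apply the interior estimates from Section 3 to obtain uniform bounds on the second fundamental form and its derivatives.

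First I would set $D^+_0=\{(x,y)\in\mathbb{R}^{n+1}\mid |y|<u_0(x),\,0<x<b_0\}$ and let $D^+(t)$ be the open evolution of $(\ref{eq:cur})$ starting from $D^+_0$; let $D^-(t)$ be its reflection across $\{x=0\}$, which by Remark \ref{rem:lev}(4) equals the open evolution of the reflected set $D^-_0$. By the discussion preceding problem $(*)$ and by Theorem \ref{thm:gu}, one has $D^+(t)=\{|y|<v(x,t),\,a_*(t)<x<b_*(t)\}$ for $0<t<T_*$, and analogously for $D^-(t)$. Assumption $(A-)$ gives $a_*(t)<0$ on $(0,\delta)$, so for each $t\in(0,\delta)$ the open set $D^+(t)\cup D^-(t)$ contains the segment $\{(x,0,\ldots,0)\mid |x|<\min(b_*(t),-a_*(t))\}$ of the $x$-axis. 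Fixing $t_1\in(0,\delta)$ small and any $t_2\in(0,t_1)$, continuity of $v$ and $a_*$ allows me to choose $\beta=\beta(t_2)>0$ and $\eta=\eta(t_2)>0$ so that
$$
K:=\{(x,y)\in\mathbb{R}^{n+1}\mid |x|\leq\beta,\ |y|\leq\eta\}\subset D^+(t)\cup D^-(t)\quad\text{for all } t\in[t_2,t_1].
$$

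Next, since $D^\pm_0\subset U\subset E_j$ for every $j$, Theorem \ref{thm:order}(3) yields $D^\pm(t)\subset E_j(t)$, hence $K\subset E_j(t)$ and $\partial E_j(t)\cap K=\emptyset$ on $[t_2,t_1]$, uniformly in $j$. An outer barrier (for instance, a large enough sphere evolving under $V=-\kappa+A$) combined with $E_j\subset E_1$ gives a $j$-uniform compact confinement of $\partial E_j(t)$ on $[0,t_1]$. Writing $\partial E_j(t)$ as the rotational graph $|y|=u_j(x,t)$, these two barriers furnish the uniform bounds $\eta\leq u_j(x,t)\leq M$ whenever $|x|\leq\beta$, for all $j$ and all $t\in[t_2,t_1]$.

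Then, on the region where $u_j$ is bounded below by $\eta$ and above by $M$, Theorem \ref{thm:grad} provides a uniform bound on $|\partial_x u_j|$ depending only on $M$ and $t_1$. With $u_j$, $\partial_x u_j$ uniformly bounded and $u_j\geq\eta$, the horizontal graph equation (\ref{eq:1horizontal}) becomes a uniformly parabolic quasilinear equation with bounded coefficients, so applying Corollary \ref{cor:es} and Remark \ref{rem:hes}(1) to a full-graph local representation (available by Theorem \ref{thm:openevolutionmeancurvature}) gives $j$-uniform bounds on all higher derivatives of $\partial E_j(t)$ on $[t_2,t_1]$. In particular, the second fundamental form and its derivatives are uniformly bounded. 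Near the two smooth caps of $\partial E_j(t)$ on the $x$-axis (far from the origin, where $E_j$ coincides with $U$ for large $j$), I parametrize locally as a vertical graph $x=w(y,t)$ and apply the same interior estimates to obtain $j$-uniform bounds there as well.

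The main obstacle is rigorously verifying the sub-barrier inclusion $K\subset D^+(t)\cup D^-(t)$ uniformly in $t\in[t_2,t_1]$ with $\beta,\eta$ depending only on $t_2$. This relies on continuity of the free boundary $a_*(t)$ and of the profile $v(\cdot,t)$ guaranteed by the solvability of $(*)$, together with the strict inequality $a_*(t)<0$ from $(A-)$ and a positive lower bound on $v(0,t)$ over $[t_2,t_1]$. Once this barrier is in place, the remaining steps are routine applications of the Section 3 estimates.
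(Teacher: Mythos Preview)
Your overall strategy coincides with the paper's: use the half-evolution $\Lambda^+(t)$ (your $D^+(t)$) as a sub-barrier to force $\partial E_j(t)$ away from the $x$-axis near the origin, then split $\partial E_j(t)$ into a middle horizontal-graph part and two vertical-graph caps and apply the interior estimates of Section~3 on each piece. That is exactly the skeleton of the paper's proof.

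Where your argument is incomplete is the \emph{decomposition step}. You establish $u_j(x,t)\geq\eta$ only for $|x|\leq\beta$, then pass directly to the caps ``far from the origin.'' But nothing in your proposal controls the intermediate region $\beta<|x|<p$, nor do you verify that $\partial E_j(t)$ actually admits a global decomposition into a single horizontal graph over $[-p,p]$ and two monotone vertical caps. The paper spends Steps~2--3 on precisely this point: it places four auxiliary balls (two inside $E_j$, two outside, at carefully chosen heights $0$ and $\alpha$) and invokes the intersection-number argument of Lemma~\ref{lem:alphadn} to show that each cylinder $\partial C_\rho$ meets $\partial E_j(t)$ at most four times. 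From this and the lower bound $v_j(0,t)>c$ it concludes that $E_j(t)^\circ$ is a $c$-domain on $[t_2/2,t_1]$, which is what makes the middle/caps splitting well-defined with a uniform overlap. Without this structural control you cannot be sure the horizontal-graph region and the vertical-graph regions cover $\partial E_j(t)$ with uniform margins, and hence you cannot legitimately localize the interior estimates.

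A smaller point: the phrase ``where $E_j$ coincides with $U$ for large $j$'' only holds at $t=0$; for $t>0$ the caps of $\partial E_j(t)$ move, and their uniform regularity must be obtained from the vertical-graph estimates (Corollary~\ref{cor:es}, Remark~\ref{rem:hes}) applied on a fixed strip $\{|y|<\alpha/2\}$, as the paper does in Step~5.
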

\begin{proof} 
Let $E_j(t)=\{(x,y)\mid|y|\leq v_j(x,t),\ -c_j(t)\leq x\leq c_j(t)\}$.

{\bf Step 1.} For all $t_2$ satisfying $0<t_2<\delta$ ($\delta$ given by assumption $(A-)$), there exists a constant $c>0$ such that 
$$
v_j(0,t)>c,\ t_2/2<t<\delta.
$$

Let $U^{+}(t)$ denote the bounded set with $\partial U^+(t)=\Lambda^+(t)$. Since $U^+(0)=U\cap\{x\geq0\}\subset E_j=E_j(0)$, $U^{+}(t)\subset E_j(t)$. By our assumption that $a_*(t)<0$ for $0<t\leq\delta$, $O\in U^{+}(t)\subset E_j(t)$ for $0<t<\delta$. For all $t_2\in(0,\delta)$, there exists $c>0$ such that $v_j(0,t)> c$ for $t_2/2\leq t\leq \delta$.

{\bf Step 2.} Construction of four auxiliary balls.

Since $U\cap\{x\geq0\}$ is an $\alpha$-domain, there exist $\beta_2>\beta_1>0$ such that $u_0(\pm\beta_1)=u_0(\pm\beta_2)=\alpha$ and $u_0^{\prime}(x)<0$ for $x>\beta_2$, $u_0^{\prime}(x)>0$ for $0<x<\beta_1$. There exist $p>\beta_1$ and $0<q<\beta_2$ such that $\dis{u_0(\pm q)=u_0(\pm p)=\frac{\alpha}{2}}$. We consider the points
$$Q=(-p,0),\ \ \ \ \ \ \ \ \ \ \ \ \ \ \ \ \ \ P=(p,0),$$
$$Q^{\prime}=(-p,\alpha),\ \ \ \ \ \ \ \ \ \ \ \ \ \ \ \ \ P^{\prime}=(p,\alpha).$$

\begin{figure}[htbp]
	\begin{center}
            \includegraphics[height=8cm]{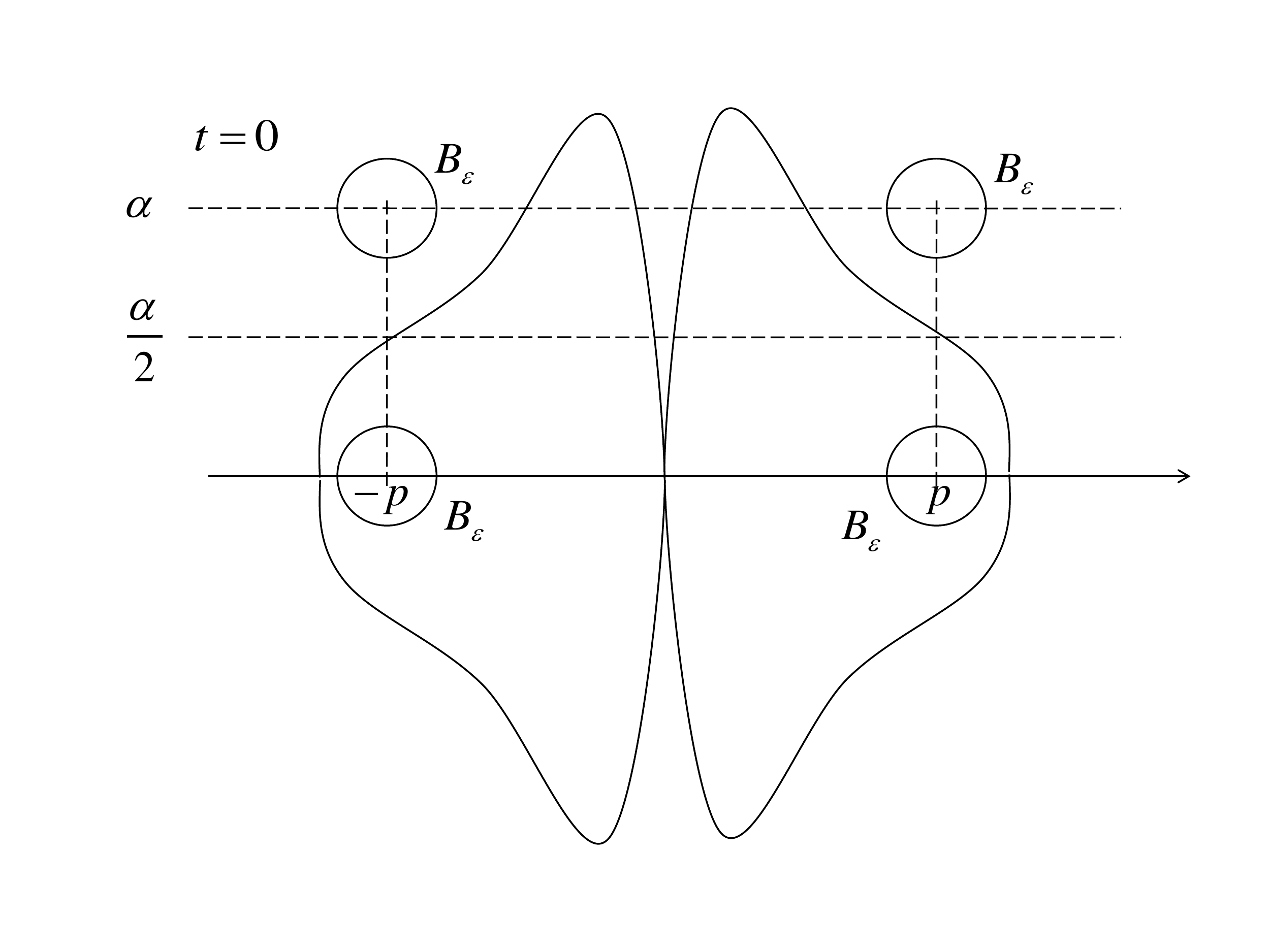}
		\vskip 0pt
		\caption{Proof of Lemma \ref{lem:closebou}}
        \label{fig:uniformb}
	\end{center}
\end{figure}

Since $P\in U$ and $P^{\prime}\in \overline{U}^{c}$, there exists $\epsilon$ such that $\overline{B_{\epsilon}(P)}\subset U$ and $\overline{B_{\epsilon}(P^{\prime})}\subset\overline{U}^c$. Consequently, $\overline{B_{\epsilon}(P)}\cup \overline{B_{\epsilon}(Q)}\subset E^{\circ}$ and $\overline{B_{\epsilon}(P^{\prime})}\cup \overline{B_{\epsilon}(Q^{\prime})}\subset E^{c}$. Then for $j$ large enough, $\overline{B_{\epsilon}(P)}\cup \overline{B_{\epsilon}(Q)}\subset E_j^{\circ}$ and $\overline{B_{\epsilon}(P^{\prime})}\cup \overline{B_{\epsilon}(Q^{\prime})}\subset E_j^{c}$. Comparison principle shows that
\begin{equation}\label{eq:q1}
\overline{B_{\epsilon(t)}(P)}\cup \overline{B_{\epsilon(t)}(Q)}\subset E_j(t)^{\circ}
\end{equation}
for $0<t<\delta_2$. By Theorem \ref{thm:sep},
\begin{equation}\label{eq:q2}
\overline{B_{\epsilon(t)}(P^{\prime})}\cup \overline{B_{\epsilon(t)}(Q^{\prime})}\subset E_j(t)^c
\end{equation}
for $0< t<\delta_2$. Here $\epsilon(t)$ is the solution of (\ref{eq:invercircle}) with $\epsilon(0)=\epsilon$ on the interval $[0,\delta_1)$. Take $\delta_2$ independent of $j$ such that $\epsilon(t)>\epsilon/2$ for $0<t<\delta_2$. 
\begin{figure}[htbp]
	\begin{center}
            \includegraphics[height=7cm]{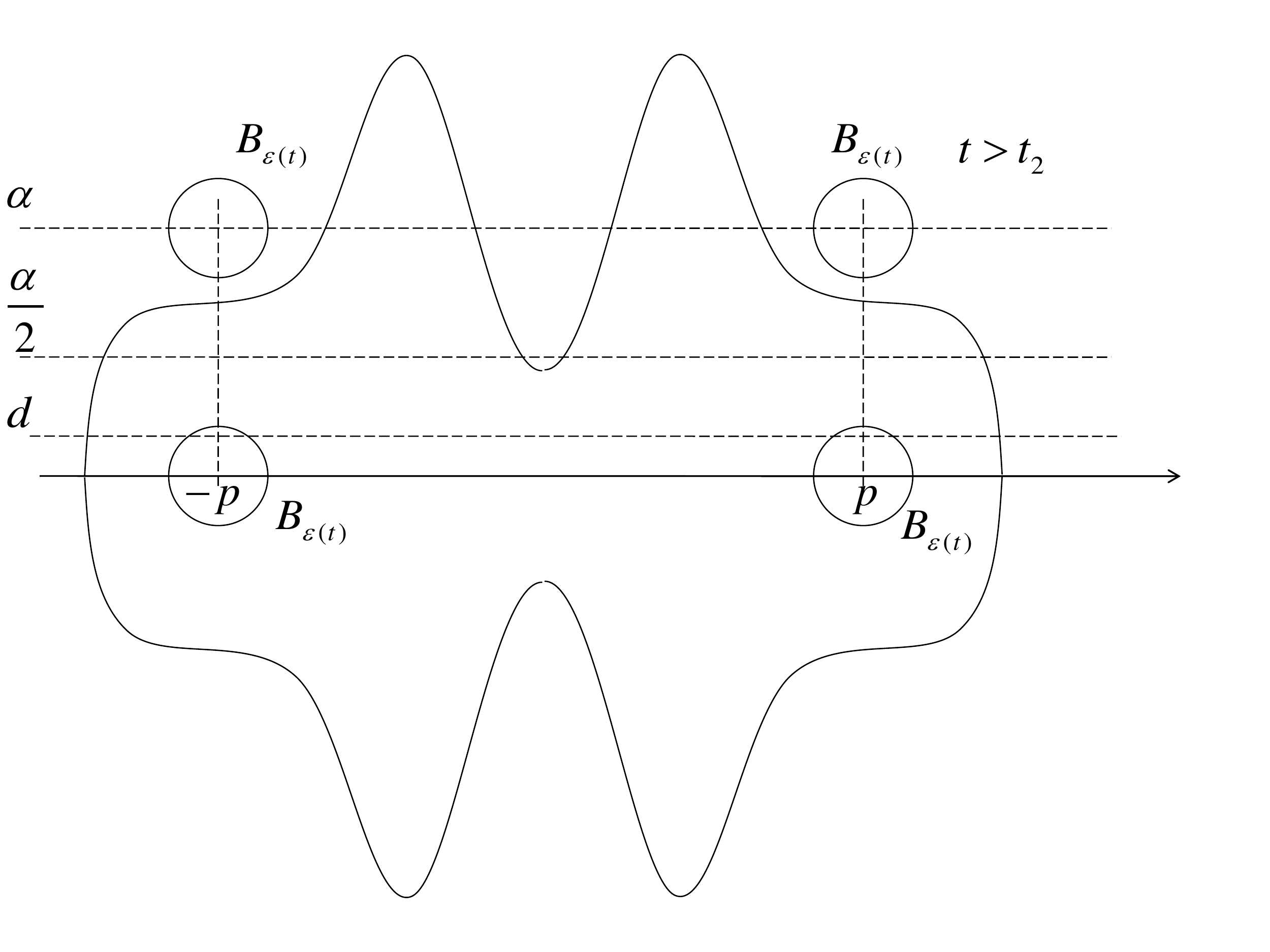}
		\vskip 0pt
		\caption{Proof of Lemma \ref{lem:closebou}}
        \label{fig:uniformb2}
	\end{center}
\end{figure}

{\bf Step 3.} Divide $\partial E_j(t)$ into two parts by auxiliary balls. 

Since for all $\rho<\alpha/2$, $C_{\rho}$ intersects $\partial E_j$ at most four times, by the intersection number argument as in the proof of Lemma \ref{lem:alphadn}, there exists $t_0>0$ such that $C_{\rho}$ intersects $\partial E_j(t)$ at most four times for $0<t<t_0$. By continuity, we can deduce that there exists $\delta_4$ such that for all $\rho<\alpha$, the equation $v_j(x,t)=\rho$ has just one root for $x>p$ for all $t<\delta_4$. By symmetry, it also holds for $x<-p$. 

Put $t_1=\min\{t_0,\delta_2,\delta_3,\delta_4\}$. Then Step 1 and intersection argument show that $E_j(t)^{\circ}$ are all $c$-domains for $t_2/2<t<t_1$.  Let $d<\min\{c,\epsilon/4\}$. By (\ref{eq:q1}) in Step 2, we have $v_j(x,t)>d$ for $t_2/2<t<t_1$ and $x$ with $|x-p|<\sqrt{\epsilon^2(t)-d^2}$ or $|x+p|<\sqrt{\epsilon^2(t)-d^2}$. By $\epsilon(t)>\epsilon/2$,
$$
v_j(x,t)\geq d\ \text{in}\ \Omega=(-p-\frac{\sqrt{3}}{4}\epsilon,p+\frac{\sqrt{3}}{4}\epsilon)\times(t_2/2,t_1).  
$$

For $x\leq-p$, by (\ref{eq:q2}) in Step 2,
$$
v_j(x,t)<\alpha/2-\epsilon(t)<\alpha/2-\epsilon/2\ \text{for}\ x\leq-p,\ 0\leq t<t_1.
$$
This is also true for $x\geq p$.

{\bf Step 4.} The derivatives and second fundamental forms of $\partial E_j(t)$ are bounded in $\Omega^{\prime}=[-p,p]\times(t_2,t_1)$.

Since $v_j(x,t)\geq d$ in $\Omega=(-p-\frac{\sqrt{3}}{4}\epsilon,p+\frac{\sqrt{3}}{4}\epsilon)\times(t_2/2,t_1)$, Theorem \ref{thm:grad} implies that $v_{jx}$ are uniformly bounded in $\Omega$. By Remark \ref{rem:hes}, $v_{jxx}$ are uniformly bounded in $\Omega^{\prime}$. 

{\bf Step 5.} The derivatives and second fundamental forms of $\partial E_j(t)$ are bounded for $x\leq -p$ and $x\geq p$, $t_2<t<t_1$.

We only consider for $x\leq-p$. For $0<t<t_1$, the part of $\partial E_j(t)$ on $x\leq-p$ can be represented by $x=w_j(y,t)$ for $|y|<\alpha/2$, $t\in (0,t_1)$, and $w_j$ satisfy the equation (\ref{eq:graph}) in the condition ``$-$'' and $n=1$. Then Corollary \ref{cor:es} and Remark \ref{rem:hes} imply that all $\frac{\partial ^k}{\partial y^k}w_j(y,t)$, $k=1,2$, are uniformly bounded for $|y|\leq\alpha/2-\epsilon/2$, $t_2<t<t_1$ and for any $t_2>0$. Then the derivatives and second fundamental forms of $\partial E_j(t)$ are uniformly bounded for $x\leq-p$, $t_2<t<t_1$. 

The proof of this lemma is completed.
\end{proof}

\begin{lem}\label{lem:opensy} There exists a sequence of open sets $\{U_j\}$ such that and $U_j\cap\{x\geq0\}$ being an $\alpha$-domain such that $U_j\uparrow U$ as $j\rightarrow\infty$.
\end{lem}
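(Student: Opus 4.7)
The plan is to construct even smooth inner approximations of the radial profile $u_0$ that vanish on a small interval around $x=0$, by a one-parameter deformation that shrinks $u_0^{2}$ by a small constant. Since $U\cap\{x\geq 0\}$ is itself an $\alpha$-domain, there exist $\delta_{10},\delta_{20}>0$ such that $u_0$ is strictly increasing on $[0,\delta_{10}]$ and strictly decreasing on $[b_0-\delta_{20},b_0]$ with $u_0(\delta_{10})=u_0(b_0-\delta_{20})=\alpha$. I would pick $\eta_j\searrow 0$ with $\eta_j<\alpha$, let $\delta_j\in(0,\delta_{10})$ and $\beta_j\in(b_0-\delta_{20},b_0)$ be the unique solutions of $u_0(\delta_j)=u_0(\beta_j)=\eta_j$, note that $\delta_j\searrow 0$ and $\beta_j\nearrow b_0$, and define
\[
w_j(x)=\sqrt{u_0(x)^2-\eta_j^{\,2}},\qquad x\in[\delta_j,\beta_j],
\]
extended evenly to $[-\beta_j,-\delta_j]$ by $w_j(-x)=w_j(x)$ and by $w_j\equiv 0$ otherwise. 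Finally set $U_j=\{(x,y)\in\mathbb{R}\times\mathbb{R}^n\mid|y|<w_j(x)\}$.

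The verification then splits into three routine checks. First, $w_j$ is smooth on $(\delta_j,\beta_j)$ because $u_0^2-\eta_j^{\,2}>0$ there and $u_0$ is smooth on $(0,b_0)$; differentiating gives $w_j'=u_0u_0'/\sqrt{u_0^2-\eta_j^{\,2}}$, so $w_j$ vanishes at the endpoints with vertical tangents, i.e.\ $w_j'(\delta_j^+)=+\infty$ and $w_j'(\beta_j^-)=-\infty$. Second, $w_j\leq u_0$ and the sequence is increasing: as $\eta_j$ decreases the quantity under the square root grows and the domain $[\delta_j,\beta_j]$ expands, so $U_j\subset U_{j+1}\subset U$. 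Third, for any $(x,y)\in U$ one has $x\neq 0$ (since $u_0(0)=0$) and $|y|<u_0(x)$, hence for $j$ large enough $\eta_j^{\,2}<u_0(x)^2-|y|^2$, giving $(x,y)\in U_j$; thus $\bigcup_j U_j=U$, i.e.\ $U_j\uparrow U$.

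It remains to verify that $U_j\cap\{x\geq 0\}=\{|y|<w_j(x),\ \delta_j\leq x\leq\beta_j\}$ is an $\alpha$-domain. Conditions (1)--(3) of Definition \ref{def:alphad} follow at once, with $I=(\delta_j,\beta_j)$ connected and $w_j$ smooth on $I$. For condition (4), the monotonicity of $u_0$ on $[0,\delta_{10}]$ and $[b_0-\delta_{20},b_0]$ transfers directly to $w_j$: on $[\delta_j,\delta_{10}]$ we have $u_0'>0$ and $u_0>\eta_j$, so $w_j'>0$; symmetrically $w_j'<0$ on $[b_0-\delta_{20},\beta_j]$. Setting $\alpha_j=\sqrt{\alpha^2-\eta_j^{\,2}}=\min\{w_j(\delta_{10}),w_j(b_0-\delta_{20})\}$, any cylinder $\partial C_\rho$ with $0<\rho\leq\alpha_j$ meets $\partial(U_j\cap\{x\geq 0\})$ in exactly two transverse points, one on each monotone branch. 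Since $\alpha_j\nearrow\alpha$, each $U_j\cap\{x\geq 0\}$ is an $\alpha_1$-domain (and even an $\alpha_j$-domain with $\alpha_j\to\alpha$), which is the required structure.

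The main obstacle I anticipate is the transversality condition (4) of the $\alpha$-domain definition, because the modification must create a new singular endpoint at $x=\delta_j>0$ with a vertical tangent while keeping $w_j$ strictly monotone on a neighbourhood of that endpoint. The choice $w_j=\sqrt{u_0^2-\eta_j^{\,2}}$ is tailored to this: it inherits monotonicity from $u_0$ on the two end-intervals where the original $\alpha$-domain property guarantees it, and it produces the required vertical tangents at $\delta_j$ and $\beta_j$ automatically, without having to glue in auxiliary smooth caps as was done in the outer approximation of Lemma \ref{lem:closeas}.
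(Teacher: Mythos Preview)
Your construction is correct. The paper does not give its own proof, deferring to \cite{Z}; by analogy with the explicit construction in Lemma~\ref{lem:closeas}, the intended argument presumably keeps $u_0$ unchanged away from the origin and glues in a smooth cap near a new left endpoint $x=\delta_j>0$. Your global formula $w_j=\sqrt{u_0^{2}-\eta_j^{2}}$ is a different and cleaner route: no cut-and-glue step is needed, and the vertical tangents at the new endpoints, the monotonicity on the end-intervals, and even the smoothness of the cap as a hypersurface in $\mathbb{R}^{n+1}$ (the local inverse near $x=\delta_j$ is $v(r)=u_0^{-1}\!\bigl(\sqrt{r^2+\eta_j^{2}}\bigr)$, a smooth even function of $r$ since $u_0'(\delta_j)\neq0$ and $\eta_j>0$) all fall out of the formula automatically. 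The only cost is that you obtain $\alpha_j$-domains with $\alpha_j=\sqrt{\alpha^{2}-\eta_j^{2}}<\alpha$ rather than literal $\alpha$-domains; since every subsequent use of the lemma (Lemma~\ref{lem:openbou}, Theorem~\ref{thm:partialUmeancurvature}) only requires a common positive parameter, and all your $U_j\cap\{x\ge0\}$ are $\alpha_1$-domains as you note, this is harmless.
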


\begin{lem}\label{lem:openbou} Let the same assumption in Theorem \ref{thm:equal90} hold. Then there exists $t_1>0$ such that for all $t_2\in(0,t_1)$, the second fundamental forms and derivatives of $\partial U_j(t)$ is uniformly bounded for $t_2<t<t_1$, where $U_j(t)$ is the open evolution of $V=-\kappa+A$ with $U_j(0)=U_j$ and $\{U_j\}$ is chosen as in Lemma \ref{lem:opensy}.
\end{lem}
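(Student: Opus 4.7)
The plan is to run the same five-step argument as in Lemma \ref{lem:closebou}, with the closed evolution $E_j(t)$ replaced by the open evolution $U_j(t)$ throughout. Writing $U_j(t)=\{(x,y)\mid |y|<v_j(x,t),\ -c_j(t)<x<c_j(t)\}$, I note that Steps 2--5 of that proof (the auxiliary balls, the intersection-number argument, the interior gradient estimate on the central part and on the two end caps) adapt essentially verbatim. The only genuinely delicate point is Step 1: the uniform lower bound $v_j(0,t)\geq c$ for $t\in[t_2/2,t_1]$. In the closed case this came from the inclusion $U^+(t)\subset E_j(t)$, but for open evolutions the initial inclusion runs the wrong way, $U_j\subset U$, so a new input is needed.

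To replace Step 1 I would arrange, in the construction of Lemma \ref{lem:opensy}, that for some fixed index $j_0$ the set $W:=U_{j_0}\cap\{x\geq 0\}$ is an $\alpha$-domain whose smoothed left cap at $(a_{j_0},0)$ has mean curvature strictly less than $A$; this is possible because $\kappa(O)<A$ under $(A-)$ and the smoothing can be made arbitrarily close to the original cap of $U^+(0)$. Applying the short-time analysis of the free boundary problem $(*)$ to $W$, the left endpoint of the open evolution of $W$ then satisfies $a^W_*(t)<0$ on some fixed interval $(0,\delta')$, so this evolution contains a ball about the origin whose cross-sectional radius is bounded below by some $c>0$ on $[t_2/2,\delta']$. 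Since $U_{j_0}\subset U_j$ for every $j\geq j_0$, the order-preserving property (Theorem \ref{thm:order}) propagates this to the open evolutions, and in particular $v_j(0,t)\geq c$ uniformly in $j\geq j_0$ on the desired interval.

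The remaining steps then transcribe directly. The interior auxiliary balls $\overline{B_\epsilon(P)},\overline{B_\epsilon(Q)}\subset U$ lie inside $U_j$ for all $j$ sufficiently large by $U_j\uparrow U$, and the exterior balls $\overline{B_\epsilon(P')},\overline{B_\epsilon(Q')}\subset\overline{U}^c$ lie in $U_j^c$ automatically; Theorem \ref{thm:order} and Theorem \ref{thm:sep} then propagate these inclusions to $U_j(t)$ with radii $\epsilon(t)>\epsilon/2$. The intersection-number argument shows that $C_\rho$ meets $\partial U_j(t)$ at most four times and that $U_j(t)^\circ$ is a $c$-domain on the relevant time window, so $v_j\geq d$ on a uniform rectangle $(-p-\tfrac{\sqrt3}{4}\epsilon,p+\tfrac{\sqrt3}{4}\epsilon)\times(t_2/2,t_1)$. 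Theorem \ref{thm:grad} together with Remark \ref{rem:hes} then delivers uniform $C^2$-bounds on $v_j$ over $[-p,p]\times(t_2,t_1)$, while Corollary \ref{cor:es} and Remark \ref{rem:hes} applied to the vertical graphs $x=w_j^\pm(y,t)$ describing $\partial U_j(t)\cap\{x\leq-p\}$ and $\partial U_j(t)\cap\{x\geq p\}$ handle the two end caps. The main obstacle is the initial construction: one must verify that the approximating sequence of Lemma \ref{lem:opensy} can be chosen so that for some fixed $j_0$ the left-cap curvature of $W=U_{j_0}^+$ is strictly below $A$, so that the $(*)$-analysis truly produces $a^W_*(t)<0$ on a time interval independent of $j$; once this is in hand, the rest of the argument is a mechanical transcription of Lemma \ref{lem:closebou}.
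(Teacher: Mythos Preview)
Your overall plan is exactly right: the paper intends the proof to mirror Lemma~\ref{lem:closebou} step by step, and Steps~2--5 do transcribe verbatim once Step~1 is in hand. You have also correctly identified Step~1 as the only genuinely new ingredient, since $U_j\subset U$ reverses the inclusion used in the closed case.

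There is, however, a gap in your replacement for Step~1. You write that one can choose $W=U_{j_0}\cap\{x\ge 0\}$ with left-cap mean curvature strictly below $A$ ``because $\kappa(O)<A$ under $(A-)$''. But assumption~$(A-)$ only says $a_*(t)<0$ for $0<t<\delta$; from $a_*(0)=0$ this gives $a_*'(0)=\kappa(O)-A\le 0$, not strict inequality. The borderline case $\kappa(O)=A$ is not excluded, and then no approximating cap can be forced to have curvature strictly below $A$, so your curvature argument for $a_*^W(t)<0$ does not go through.

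The fix does not require curvature at all. Since each $U_j\cap\{x\ge 0\}$ is an $\alpha$-domain, Theorem~\ref{thm:partialUmeancurvature} gives smooth evolutions $U_j^+(t)$ with left endpoints $a_j^*(t)$, and by Theorem~\ref{thm:mon} one has $U_j^+(t)\uparrow U^+(t)$, hence $a_j^*(t)\downarrow a_*(t)$ for each $t$. On any compact interval $[t_2/4,\delta']\subset(0,\delta)$ the continuous limit $a_*$ is bounded above by some $-2\eta<0$; Dini's theorem applied to the monotone sequence of continuous functions $a_j^*$ then yields a fixed $j_0$ with $a_{j_0}^*(t)<-\eta$ on $[t_2/4,\delta']$. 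Since $U_{j_0}^+\subset U_{j_0}\subset U_j$ for every $j\ge j_0$, Theorem~\ref{thm:order} gives $U_{j_0}^+(t)\subset U_j(t)$, and hence $v_j(0,t)\ge c>0$ uniformly in $j\ge j_0$ on that interval. The finitely many indices $j<j_0$ are handled individually. With this correction to Step~1, your transcription of Steps~2--5 is sound.
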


Lemmas \ref{lem:opensy} and \ref{lem:openbou} are all able to be proved as in \cite{Z}.

As mentioned in Proposition \ref{pro:uniq2}, in order to get the uniqueness, we must give the estimate of ``escape speed''. Let $R_0$ be taken small enough such that $B_{R_0}((R_0,0,\cdots,0))\cup B_{R_0}((-R_0,0,\cdots,0))\subset U$. In next lemma, we construct a sub-solution.

\begin{lem}\label{lem:subsolution}(Sub-solution)
Take $R_0$ as above. Function $\underline{u}$ is even and defined by
$$
\underline{u}(x,t)=\left\{
\begin{array}{lcl}
\sqrt{(r(t)+R(t))^2-R_0^2}-\sqrt{r^2(t)-x^2},\ 0\leq x<\frac{R_0r(t)}{R(t)+r(t)},\\
\sqrt{R^2(t)-(x-R_0)^2},\ \frac{R_0r(t)}{R(t)+r(t)}\leq x\leq R_0+R(t).
\end{array}
\right.
$$
Here $r(t)=t^{3/4}$ and $R(t)$ satisfies $R^{\prime}=A-n/R$, $R(0)=R_0$. Then there exists $t_*>0$ such that $(\underline{u},R_0+R(t))$ is a sub-solution of (\ref{eq:1graph}), (\ref{eq:1bounday}), (\ref{eq:1initial}), (\ref{eq:innerpositive}) for $0<t<t_*$.
\end{lem}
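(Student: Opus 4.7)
The plan is to verify the sub-solution property piecewise: after a geometric check that $\underline{u}$ is $C^1$ across the joining abscissa, I would verify the parabolic inequality on each of the two smooth arcs, handle the $C^1$ corner via the viscosity test, and finally confirm the initial and boundary data. The identity $h(t)^2=(r+R)^2-R_0^2$ means exactly that the circle of radius $r(t)$ centered at $(0,h(t))$ and the circle of radius $R(t)$ centered at $(R_0,0)$ are externally tangent, and the tangency point lies on the segment joining their centers at distance $r$ from the first, giving $(R_0r/(r+R),\,hR/(r+R))$. This is precisely the switching abscissa $x^{\ast}(t):=R_0r/(r+R)$, so the two pieces match in both position and slope.

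On the outer piece $\underline{u}=\sqrt{R^2-(x-R_0)^2}$ the rotated hypersurface is a sphere of radius $R(t)$ with mean curvature $n/R$, and $R'=A-n/R$ yields exact equality in (\ref{eq:1graph}). On the inner piece $\underline{u}=h-\sqrt{r^2-x^2}$, direct differentiation gives
\[
\underline{u}_t=h'-\frac{rr'}{\sqrt{r^2-x^2}},\quad \frac{\underline{u}_{xx}}{1+\underline{u}_x^2}=\frac{1}{\sqrt{r^2-x^2}},\quad \sqrt{1+\underline{u}_x^2}=\frac{r}{\sqrt{r^2-x^2}},
\]
reducing the sub-solution inequality to
\[
h'(t)\le \frac{1+rr'+Ar}{\sqrt{r^2-x^2}}-\frac{n-1}{h-\sqrt{r^2-x^2}}.
\]
The right-hand side is smallest at $x=0$, where it becomes $1/r+r'+A-(n-1)/(h-r)$. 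Using $R(t)=R_0+O(t)$ and $r(t)=t^{3/4}$, I expect $h(t)\sim\sqrt{2R_0}\,t^{3/8}$ and $h'(t)\sim C\,t^{-5/8}$, whereas $1/r=t^{-3/4}$ blows up strictly faster while $(n-1)/(h-r)\sim t^{-3/8}$ is of strictly lower order. A short monotonicity check in $x$ then extends the inequality to all $x\in[0,x^{\ast}(t)]$.

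At the corner $x=x^{\ast}(t)$ the inner piece has $\underline{u}_{xx}>0$ and the outer has $\underline{u}_{xx}<0$; any smooth $\phi\ge\underline{u}$ touching at $x^{\ast}$ satisfies $\phi_{xx}(x^{\ast})\ge \underline{u}_{xx}(x^{\ast-})$, and monotonicity of the right-hand side of (\ref{eq:1graph}) in $u_{xx}$ transfers the inner-piece inequality to $\phi$, so the viscosity sub-solution property holds there. The remaining conditions are immediate: at $x=R_0+R(t)$ the outer piece gives $\underline{u}=0$ and $\underline{u}_x=-\infty$; at $t=0$ the inner arc degenerates and the outer arc becomes the half-circle $\sqrt{R_0^2-(x-R_0)^2}$, which lies below $u_0$ on $[0,R_0]$ by the choice of $R_0$ with $B_{R_0}((R_0,0,\ldots,0))\subset U$; and $\underline{u}>0$ in the interior follows from the construction.

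The main obstacle I anticipate is the quantitative inequality on the inner piece near $t=0$: the ``escape rate'' $h'(t)\sim t^{-5/8}$ must be dominated by the favourable right-hand term $1/r\sim t^{-3/4}$ while the unfavourable $(n-1)/(h-r)\sim t^{-3/8}$ is absorbed into the remaining slack. The exponent $r(t)=t^{3/4}$ is finely calibrated to this balance: a larger exponent would let $(n-1)/(h-r)$ overtake $1/r$, while a smaller exponent would make $h'$ too large relative to $1/r$. A secondary concern is obtaining the inequality uniformly for $x\in[0,x^{\ast}(t)]$ rather than just at the worst case $x=0$, but since $1/\sqrt{r^2-x^2}$ on the right only grows as $x$ increases while $h'$ is independent of $x$, the $x=0$ estimate indeed controls the full range.
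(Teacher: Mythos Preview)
Your proposal is correct and follows essentially the same route as the paper: the same piecewise definition with $C^1$ matching at the tangency abscissa, exact equality on the spherical outer arc via $R'=A-n/R$, the same derivative computations on the inner arc, reduction to the worst case $x=0$ by monotonicity in $\sqrt{r^2-x^2}$, and the same asymptotic balance $h'\sim t^{-5/8}\ll 1/r=t^{-3/4}$ with $(n-1)/(h-r)\sim t^{-3/8}$ absorbed. Your viscosity test at the corner is in fact more explicit than the paper's one-line assertion; the only quibble is your aside that a larger exponent for $r$ would let $(n-1)/(h-r)$ overtake $1/r$ --- in fact that comparison holds for all $\alpha>0$, and the upper constraint on the exponent comes instead from the integrability condition $\int_0^T \alpha(t)^{-2}\,dt<\infty$ needed in Corollary~\ref{cor:escapespeed}.
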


\begin{proof}
We can easily deduce that $|R(t)-R_0|=O(t)$, as $t\rightarrow0$. Since $r(t)=t^{3/4}>R(t)$, for sufficient small $t$, $\underline{u}$ is well-defined for small $t$.

1. Positive: Obviously, $u_0(x,t)>0$ for $-R_0-R(t)<x<R_0+R(t)$.

2. Initial condition: By the choice of $R_0$, $\underline{u}(x,0)=\sqrt{R_0-(x+R_0)^2}\leq u_0(x)$ for $0\leq x\leq R_0$ and $R_0\leq b_0$.

3. Boundary condition: Obviously, at boundary, 
$$
\underline{u}(-R_0-R(t),t)=\underline{u}(R_0+R(t),t)=0
$$
and
$$
\underline{u}_x(-R_0-R(t),t)=-\underline{u}_x(R_0+R(t),t)=\infty.
$$
4. Interior: For $\frac{R_0r(t)}{R(t)+r(t)}< x< R_0+R(t)$ or $-R_0-R(t)<x<-\frac{R_0r(t)}{R(t)+r(t)}$, $\underline{u}(x,t)=\sqrt{R^2(t)-(x-R_0)^2}$ satisfies (\ref{eq:1graph}). Next we only need prove $\underline{u}$ is a sub-solution of (\ref{eq:1graph}) for $-\frac{R_0r(t)}{R(t)+r(t)}<x<\frac{R_0r(t)}{R(t)+r(t)}$ and $t$ small. By calculation,
$$
1+\underline{u}_x^2=\frac{r^2}{r^2-x^2};
$$
$$
\underline{u}_{xx}=\frac{r^2}{(r^2-x^2)^{3/2}};
$$
$$
\underline{u}_t=\frac{(r+R)(r^{\prime}+R^{\prime})}{\sqrt{(r+R)^2-R_0^2}}-\frac{rr^{\prime}}{\sqrt{r^2-x^2}}.
$$
Therefore, 
\begin{eqnarray*}
\underline{u}_t&-&\frac{\underline{u}_{xx}}{1+\underline{u}_x^2}+\frac{n-1}{\underline{u}}-A\sqrt{1+\underline{u}_x^2}=\frac{(r+R)(r^{\prime}+R^{\prime})}{\sqrt{(r+R)^2-R_0^2}}-\frac{rr^{\prime}}{\sqrt{r^2-x^2}}-\frac{1}{\sqrt{r^2-x^2}}\\
&+&\frac{n-1}{\sqrt{(r+R)^2-R_0^2}-\sqrt{r^2-x^2}}-\frac{Ar}{\sqrt{r^2-x^2}}\leq\frac{(r+R)(r^{\prime}+R^{\prime})}{\sqrt{(r+R)^2-R_0^2}}-r^{\prime}-\frac{1}{r}\\
&+&\frac{n-1}{\sqrt{(r+R)^2-R_0^2}-r}.
\end{eqnarray*}
Since $|R^{\prime}(t)|$ is bounded, $R(t)$ is bounded from above and below for small $t$, and $r(t)=t^{3/4}$, we can deduce that 
$$
\frac{1}{\sqrt{(r+R)^2-R_0^2}}=\frac{1}{\sqrt{(r+R-R_0)(r+R+R_0)}}=O(t^{-3/8});
$$
$$
(r+R)(r^{\prime}+R^{\prime})=O(t^{-1/4})
$$
as $t\rightarrow0$. Consequently, 
\begin{eqnarray*}
\underline{u}_t&-&\frac{\underline{u}_{xx}}{1+\underline{u}_x^2}+\frac{n-1}{\underline{u}}-A\sqrt{1+\underline{u}_x^2}\leq C_1 t^{-5/8}-C_2 t^{-1/4}-C_3 t^{-3/4}+C_4 t^{-3/8}\\
&=&t^{-3/4}(C_1t^{1/8}-C_2t^{1/2}-C_3+C_4t^{3/8})<0
\end{eqnarray*}
for any sufficient small $t>0$. It is easy to check that $\underline{u}$ is $C^1$ at $x=\frac{R_0r(t)}{R(t)+r(t)}$. Then there exists $t_*>0$ such that $\underline{u}$ is a sub-solution of (\ref{eq:1graph}) in viscosity sense for $0<t<t_*$. 

We complete the proof.
\end{proof}

\begin{cor}\label{cor:escapespeed}
Recall $U=\{(x,y)\in\mathbb{R}^{n+1}\mid |y|<u_0(x),\ b_0<x<b_0\}$ and $E(t)$ be the outer evolution of $\overline{U}$. Then there exists $t^*>0$ such that, for each $t\in[0,t^*)$, $E(t)$ can be described as follows,
$$
E(t)=\{(x,y)\in\mathbb{R}^{n+1}\mid |y|\leq v(x,t),\ -b_1(t)\leq x\leq b_1(t)\}.
$$
Here $(v,b_1)$ is the uniqueness solution of (\ref{eq:1graph}), (\ref{eq:1bounday}), (\ref{eq:1initial}), (\ref{eq:innerpositive}) on the interval $[0,t^*)$. Moreover, there exists $\alpha(t)$ with $\int_0^{t^*}\frac{1}{\alpha^2(t)}dt<\infty$ such that $E(t)^{\circ}$ is $\alpha(t)$-domain for $t<t^*$.
\end{cor}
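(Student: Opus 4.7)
The plan is to build $(v,b_1)$ as the limit of outer evolutions of the smooth approximations from Lemma \ref{lem:closeas}, and then to obtain the integrable escape speed from the sub-solution of Lemma \ref{lem:subsolution}. First, I take the sequence $E_j\downarrow\overline{U}$ with each $E_j^\circ$ an $\alpha/2^j$-domain. The monotone convergence Theorem \ref{thm:mon}(2) gives $E_j(t)\downarrow E(t)$ on any interval where the $E_j(t)$ remain smooth, while Lemma \ref{lem:closebou} provides uniform $C^2$ estimates on $\partial E_j(t)$ on $[t_2,t_1]$ with $t_2$ arbitrarily small. Consequently $\partial E(t)$ is smooth on $(0,t_1)$ and inherits rotational symmetry from the $E_j$.

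Next, using the graph decomposition from the proof of Lemma \ref{lem:closebou} (vertical graph near the tips $|x|\geq p$, horizontal graph on $[-p,p]$ with a uniform positive lower bound on the height), I pass to the limit to write
$$
E(t)=\{(x,y)\in\mathbb{R}^{n+1}\mid|y|\leq v(x,t),\ -b_1(t)\leq x\leq b_1(t)\}.
$$
Theorem \ref{thm:openevolutionmeancurvature} then shows that $v$ satisfies (\ref{eq:1graph}) in $\{v>0\}$; the boundary condition (\ref{eq:1bounday}) is read off from the cap structure at $\pm b_1(t)$, the initial condition (\ref{eq:1initial}) from the uniform convergence of the $v_j$ at $t=0$, and the strict positivity (\ref{eq:innerpositive}) from the sub-solution comparison described below.

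For the escape-speed estimate I compare $E(t)$ with the sub-solution from Lemma \ref{lem:subsolution}. Since $\underline{u}(\cdot,0)\leq u_0$ on $[-R_0,R_0]$, a slight inward perturbation of $\underline{U}(0)=\{|y|<\underline{u}(\cdot,0)\}$ lies compactly inside $U$, so Theorem \ref{thm:order}(4) forces its closed outer evolution to stay inside the inner evolution of $U$, hence inside $E(t)$. Removing the perturbation and using that $\underline{u}$ is a sub-solution of (\ref{eq:1graph}) produces $\underline{u}(x,t)\leq v(x,t)$ on $[-R_0,R_0]$ for small $t$. Setting $\alpha(t):=\underline{u}(0,t)$ and using the asymptotic $\alpha(t)\sim\sqrt{2R_0}\,t^{3/8}$ from Lemma \ref{lem:subsolution} yields
$$
\int_0^{t^*}\frac{dt}{\alpha(t)^2}\leq C\int_0^{t^*}t^{-3/4}\,dt<\infty.
$$
Combining $v\geq\alpha(t)$ on $[-R_0,R_0]$ with the monotonicity of $v$ near $\pm b_1(t)$ (inherited from the $\alpha/2^j$-domain structure of the $E_j$ by an intersection-number argument as in Lemma \ref{lem:alphadn}), every cylinder $\partial C_\rho$ with $\rho\leq\alpha(t)$ meets $\partial E(t)$ at exactly two transverse points; thus $E(t)^\circ$ is an $\alpha(t)$-domain on $[0,t^*)$. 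Uniqueness of $(v,b_1)$ as a solution of the free boundary problem then follows by applying Proposition \ref{pro:uniq2} to the hypersurface evolutions generated by any two such solutions.

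The main obstacle I anticipate is the rigorous comparison $\underline{U}(t)\subset E(t)$. The sub-solution $\underline{u}$ is only $C^1$ at the matching radius $x=R_0 r(t)/(R(t)+r(t))$ and satisfies (\ref{eq:1graph}) as a strict inequality only in the viscosity sense, while the initial set $\overline{U}$ is singular at the origin. Approximating $\underline{u}$ by a family of smooth strict sub-solutions obtained by slightly shrinking $R_0$ and perturbing $R(t)$ downward, and then applying Theorem \ref{thm:order}(4) to each member of the family, should propagate the comparison without altering the exponent $3/8$ that makes $1/\alpha(t)^2$ integrable.
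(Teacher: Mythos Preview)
Your overall strategy---approximate by the smooth $E_j$ from Lemma~\ref{lem:closeas}, pass to the limit via Theorem~\ref{thm:mon}, and extract the escape speed from the sub-solution of Lemma~\ref{lem:subsolution}---matches the paper's. However, the order in which you run these steps creates a gap and also generates the obstacle you flag at the end.

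You invoke Lemma~\ref{lem:closebou} to obtain uniform $C^2$ bounds on $\partial E_j(t)$ \emph{before} using the sub-solution. But Lemma~\ref{lem:closebou} is stated under assumption~$(A-)$: its Step~1 uses $a_*(t)<0$ to produce the lower bound $v_j(0,t)>c$. Corollary~\ref{cor:escapespeed} must hold without $(A-)$, since it is invoked in the proof of part~(2) of Theorem~\ref{thm:equal90} under assumption~$(A+)$. The paper handles this by reversing your order: it first compares the sub-solution region $\{|y|\le\underline{u}(x,t)\}$ with each smooth $E_j(t)$ (a classical comparison, since $\{|y|\le\underline{u}(x,0)\}\subset E_j(0)$ and $v_j$ is a genuine solution of~(\ref{eq:1graph}) for $t>0$), obtains $\underline{u}(0,t)\ge Ct^{3/8}$, and concludes that every $E_j(t)^\circ$ is an $\alpha(t)$-domain with $\alpha(t)=Ct^{3/8}$. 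Only then does it run ``the same method as in Lemma~\ref{lem:closebou}'', with the sub-solution bound replacing the $(A-)$-based bound in Step~1, to get the uniform estimates and pass to the limit.

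This reordering also dissolves your ``main obstacle''. You never need to compare the merely $C^1$, viscosity sub-solution $\underline{u}$ against the singular limit $E(t)$, nor perturb and invoke Theorem~\ref{thm:order}(4). The comparison is done once, at the level of the smooth $v_j$, and the inclusion $\{|y|\le\underline{u}(x,t)\}\subset E(t)$ then follows by taking the intersection $E(t)=\bigcap_j E_j(t)$.
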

\begin{proof}
Let $E_j(t)$ be given by Lemma \ref{lem:closebou}. Since $\{(x,y)\in \mathbb{R}^{n+1}\mid|y|\leq\underline{u}(x,0)\}\subset E_j(0)=\{(x,y)\in \mathbb{R}^{n+1}\mid|y|\leq v_j(x,0),\ -c_j(0)\leq x\leq c_j(0)\})$, $\{(x,y)\in \mathbb{R}\times\mathbb{R}^N\mid|y|\leq\underline{u}(x,t)\}\subset E_j(t)=\{(x,y)\in \mathbb{R}^{n+1}\mid|y|\leq v_j(x,t),\ -c_j(t)\leq x\leq c_j(t)\}$ for $0<t<t_*$ and for all $j$. 

By $r(t)=t^{3/4}$, $|R(t)-R_0|=O(t)$ and boundedness of $R(t)$ from below as $t\rightarrow0$, there exists $t^*>0$ such that
\begin{eqnarray*}
\underline{u}(0,t)&=&\sqrt{(r(t)+R(t))^2-R_0^2}-\sqrt{r^2(t)}= \sqrt{r(t)}\left(\sqrt{r(t)+2R(t)+\frac{R(t)-R_0}{r(t)}}-\sqrt{r(t)}\right)\\
&\geq& C^{\prime}t^{3/8}-t^{3/4}\geq Ct^{3/8},
\end{eqnarray*}
for $t<t^*$. If we taken
$$
\alpha(t)=Ct^{3/8}\ \text{for}\ 0\leq t<t^*,
$$
then $\int_0^{t^*}\frac{1}{\alpha^2(t)}dt<\infty$.
Therefore, $E_j(t)^{\circ}$ are all $\alpha(t)$-domains. Moreover, $(v_j,c_j)$ satisfy
(\ref{eq:1graph}), (\ref{eq:1bounday}), (\ref{eq:innerpositive}). By $E_j(0)\downarrow E(0)$, Theorem \ref{thm:mon} and the same method as in Lemma \ref{lem:closebou}, we can show that $E_j(t)\downarrow E(t)$ and derivatives and second fundamental forms of $E_j(t)$ are uniformly bounded. $(v,b_1)=\lim\limits_{j\rightarrow\infty}(v_j,c_j)$ is the solution of (\ref{eq:1graph}), (\ref{eq:1bounday}), (\ref{eq:1initial}), (\ref{eq:innerpositive}). Moreover, $\{(x,y)\in \mathbb{R}^{n+1}\mid|y|\leq\underline{u}(x,t)\}\subset E(t)$ for $0<t<t^*$. $E(t)^{\circ}$ is also an $\alpha(t)$-domain $0<t<t^*$. The uniqueness of the solution follows from Proposition \ref{pro:uniq2}.
\end{proof}

\begin{proof}[Proof of (1) in Theorem \ref{thm:equal90}] Let $E(t)$ and $U(t)$ be the closed and open evolution of (\ref{eq:cur}) with $E(0)=\overline{U}$ and $U(0)=U$, respectively.

Let $U^{+}(t)$ denote the bounded open set with $\partial U^+(t)=\Lambda^+(t)$. Since $U^+(0)=U\cap\{x\geq0\}\subset U=U(0)$, there holds $U^{+}(t)\subset U(t)$. By our assumption that $a_*(t)<0$ for $0<t\leq\delta$, $O\in U^{+}(t)\subset U(t)$ for $0<t<\delta$, where $\delta$ is given in assumption $(A-)$. This means that $\partial U(t)$ escapes away from origin. Therefore, 
$$
\{(x,y)\in \mathbb{R}^{n+1}\mid|y|\leq\underline{u}(x,t)\}\subset U(t),\ 0<t<T,
$$
where $T=\min\{t^*,\delta, t_1\}$. (Recall $t_1$ is given by Lemma \ref{lem:openbou}) Consequently, $U(t)$ is also an $\alpha(t)$-domain. For small $t$, we can easily check that $\partial E(t)$ and $\partial U(t)$ satisfy the assumption in Proposition \ref{pro:uniq2}. We get $\partial E(t)=\partial U(t)$ for $0\leq t<T$.

We complete the proof.
\end{proof}

\begin{proof}[Proof of (2) in Theorem \ref{thm:equal90}] Choose $T_1<\min\{t^*,\delta\}$. Here $\delta$ is given in assumption $(A+)$, and $t^*$ is given in Corollary \ref{cor:escapespeed}.

Let $U^{\pm}(t)$ be the bounded open domain with $\partial U^{\pm}(t)=\Lambda^{\pm}(t)$. Thus the left end point of $U^+(t)$ and the right end point of $U^-(t)$ are $(a_*(t),0,\cdots,0)$ and $(-a_*(t),0,\cdot,0)$, respectively. By assumption $(A+)$, $-a_*(t)\leq a_*(t)$, $0\leq t<T_1$. Therefore, $U^+(t)\cap U^-(t)=\emptyset$, $0\leq t<\delta$. From Lemma \ref{lem:sep}, the inner evolution $U(t)$ satisfies $U(t)=U^{+}(t)\cup U^{-}(t)$, for $0\leq t<\delta$.

Corollary \ref{cor:escapespeed} shows that $E(t)^{\circ}$ is an $\alpha(t)$-domain for $0<t<T_1$. By $\alpha(t)= Ct^{3/8}$, there exists small enough $\rho$ such that the ball 
$$
B_{\rho}((0,C(\frac{T_1}{4})^{3/8},0,\cdots,0))\subset E(t), \frac{T_1}{2}<t<T_1
$$
and 
$$
\dis{B_{\rho}((0,C(\frac{T_1}{4})^{3/8},0,\cdots,0))}\cap U(t)=\emptyset,\ \dis{\frac{T_1}{2}<t<T_1}.
$$
This means that the interface evolution $\Gamma(t)=E(t)\setminus U(t)$ has interior.
\end{proof}

\begin{rem}\label{rem:conse1}
(1) In the proof of (1) in Theorem \ref{thm:equal90}, we get the closed evolution and the open evolution are all connected sets. Therefore, they are homeomorphic. Moreover, using the unique result, we can prove that they are coincide.

(2) In the proof of (2) in Theorem \ref{thm:equal90}, we get the closed evolution is connected and the open evolution is separated. Therefore, they are not homeomorphic.
\end{rem}

\section{Proof of Theorem \ref{thm:less90}}
In this section, we give the proof of Theorem \ref{thm:less90}. First, we introduce the following similarity transformation: for $T>0$,
\begin{equation}\label{eq:transformation1}
z=\frac{x}{\sqrt{2(T-t)}},\ \tau=-\frac{1}{2}\ln(T-t)
\end{equation}
and
\begin{equation}\label{eq:transformation2}
w(z,\tau)=\frac{1}{\sqrt{2}}e^{\tau}u(\sqrt{2}e^{-\tau}z,T-e^{-2\tau}).
\end{equation}

Then $u$ satisfies 
\begin{equation}
u_t=\frac{u_{xx}}{1+u_x^2}-\frac{n-1}{u}+A\sqrt{1+u_x^2},\tag{\ref{eq:1graph}}
\end{equation}
 if and only if $w$ satisfies
\begin{equation}\label{eq:shrinkingeq1}
w_{\tau}=\frac{w_{zz}}{1+w_z^2}-zw_z+w-\frac{n-1}{w}+\sqrt{2}Ae^{-\tau}\sqrt{1+w_z^2}.
\end{equation}
In 1992, \cite{A3} shows that there is a torus shape self-similar solution of (\ref{eq:cur}) for $A=0$ called ``Angenent shrinking doughnut''. The self-similar solution remaining the shape of doughnut shrinks to a point. Moreover, in \cite{AAG}, using this self-similar solution, they prove that after a rotational hypersurface pinches, the hypersurface will be separated into two disjoint components. We also expect to prove Theorem 1.2 by using some self-similar solution of (\ref{eq:cur}), however, it is difficult to find such solution. Therefore, we construct a compact self-similar super-solution of (\ref{eq:1graph}).

\begin{prop}\label{prop:supersolution}(Super-solution of equation (\ref{eq:shrinkingeq1}))
Denote $\overline{w}:=C-\sqrt{\rho^2-z^2}$, $-\rho\leq z\leq \rho$. 

For $n>2$, for every $C$, $\rho$ with $C^2+\rho^2<n$ and $C>\rho>1$, there exists $\tau_0(C,\rho)$ such that $\overline{w}$ is a super-solution of (\ref{eq:shrinkingeq1}) for $-\rho< z<\rho$, $\tau>\tau_0$.

For $n=2$, Fix $\theta\in(0,1)$, $\epsilon_0\in(0,\frac{2}{9}\theta)$ arbitrary. Then, for each $1+\theta\epsilon_0<\rho<C<1+\epsilon_0$, there exists $\tau_0(C,\rho)$ such that $\overline{w}$ is a super-solution of (\ref{eq:shrinkingeq1}) for $-\rho< z<\rho$, $\tau>\tau_0$.
\end{prop}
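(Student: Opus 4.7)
The plan is to verify directly that $\overline{w}(z) = C - \sqrt{\rho^2 - z^2}$ satisfies the super-solution inequality for equation (\ref{eq:shrinkingeq1}). Since $\overline{w}_\tau \equiv 0$, this reduces to establishing
$$
\mathcal{L}[\overline{w}] := \frac{\overline{w}_{zz}}{1+\overline{w}_z^2} - z\overline{w}_z + \overline{w} - \frac{n-1}{\overline{w}} + \sqrt{2}Ae^{-\tau}\sqrt{1+\overline{w}_z^2} \;\leq\; 0
$$
pointwise on $(-\rho,\rho)$ for $\tau>\tau_0$. A direct computation gives $1 + \overline{w}_z^2 = \rho^2/(\rho^2 - z^2)$, $\overline{w}_{zz}/(1+\overline{w}_z^2) = 1/\sqrt{\rho^2 - z^2}$, $z\overline{w}_z = z^2/\sqrt{\rho^2-z^2}$, and $\sqrt{1+\overline{w}_z^2} = \rho/\sqrt{\rho^2-z^2}$. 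Introducing $s := \sqrt{\rho^2 - z^2} \in (0,\rho]$ and multiplying through by the positive quantity $s(C-s)$, the inequality $\mathcal{L}[\overline{w}] \leq 0$ is equivalent to the polynomial estimate
$$
G(s,\tau) := Cs^2 + (n - C^2 - \rho^2)\,s + C(\rho^2 - 1) - \sqrt{2}Ae^{-\tau}\rho\,(C-s) \;\geq\; 0 \quad\text{for all } s\in(0,\rho].
$$
Writing $G_0(s) := Cs^2 + (n - C^2 - \rho^2)s + C(\rho^2 - 1)$ for the $\tau$-independent part, the perturbation $\sqrt{2}Ae^{-\tau}\rho(C-s)$ is nonnegative and uniformly bounded on $[0,\rho]$ by $\sqrt{2}A\rho C e^{-\tau}$, which vanishes as $\tau\to\infty$. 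So the proof reduces to showing that $G_0>0$ on $[0,\rho]$ and then picking $\tau_0$ large enough to absorb the exponentially small perturbation.

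For $n>2$, the hypothesis $C^2+\rho^2<n$ with $C>\rho>1$ makes every coefficient of $G_0$ strictly positive, so $G_0(s)\geq C(\rho^2-1)>0$ on $[0,\rho]$, and one takes any $\tau_0(C,\rho)$ with $\sqrt{2}A\rho C e^{-\tau_0}<C(\rho^2-1)$. The case $n=2$ is more delicate: now $2-C^2-\rho^2<0$, so $G_0$ is a strictly convex quadratic with interior minimum at $s_*=(C^2+\rho^2-2)/(2C)$ and
$$
G_0(s_*) = C(\rho^2 - 1) - \frac{(C^2+\rho^2-2)^2}{4C},
$$
so positivity of $G_0$ on $[0,\rho]$ is equivalent to the discriminant condition $4C^2(\rho^2-1)>(C^2+\rho^2-2)^2$.

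To verify the last inequality in the narrow window I would parametrise $C=1+c$, $\rho=1+r$ with $\theta\epsilon_0<r<c<\epsilon_0$ and use the elementary bounds
$$
4C^2(\rho^2-1) = 4(1+c)^2\,r\,(2+r) > 8\theta\epsilon_0, \qquad (C^2+\rho^2-2)^2 = \bigl(2(c+r)+c^2+r^2\bigr)^2 < 4\epsilon_0^2(2+\epsilon_0)^2,
$$
which reduce the desired inequality to $2\theta>\epsilon_0(2+\epsilon_0)^2$. Since $\theta<1$ and $\epsilon_0<2\theta/9$ force $\epsilon_0<2/9$ and hence $(2+\epsilon_0)^2<(20/9)^2=400/81$, one obtains $\epsilon_0(2+\epsilon_0)^2<(2\theta/9)(400/81)=800\theta/729<2\theta$, so $G_0(s_*)>0$. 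The analogous choice of $\tau_0$ then handles the remaining $e^{-\tau}$ perturbation.

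The principal obstacle is the $n=2$ regime: losing the term-by-term positivity that trivialises $G_0\geq0$ in the $n>2$ case forces an honest quadratic optimisation, and the admissible range for $(C,\rho)$ collapses to an $\epsilon_0$-neighbourhood of $(1,1)$ in which only the tight quantitative relation $\epsilon_0<2\theta/9$ keeps the strict discriminant inequality in force. By comparison, the reduction to the polynomial $G$, the $n>2$ step, and the large-$\tau$ absorption of the driving-force perturbation are largely routine.
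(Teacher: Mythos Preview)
Your proof is correct and follows essentially the same approach as the paper: both reduce the super-solution inequality to positivity of the quadratic $G_0(q)=Cq^2+(n-C^2-\rho^2)q+C(\rho^2-1)$ in the variable $q=\sqrt{\rho^2-z^2}$, dispatch $n>2$ by sign of coefficients, minimise $G_0$ at $q_*=(C^2+\rho^2-2)/(2C)$ for $n=2$, and absorb the $e^{-\tau}$ perturbation by choosing $\tau_0$ large. Your arithmetic in the $n=2$ window (parametrising $C=1+c$, $\rho=1+r$ and bounding $(2+\epsilon_0)^2<400/81$) differs slightly from the paper's cruder estimate $-\frac{(C^2+\rho^2-2)^2}{4C}+(\rho^2-1)C\geq\frac{8\theta\epsilon_0-36\epsilon_0^2}{4(1+\theta\epsilon_0)}$, but both establish $G_0(q_*)>0$ under the hypothesis $\epsilon_0<\tfrac{2}{9}\theta$.
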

\begin{proof}
By calculation,
$$
\overline{w}_{\tau}=0,
$$
$$
\overline{w}_z=\frac{z}{\sqrt{\rho^2-z^2}}
$$
and
$$
\overline{w}_{zz}=\frac{\rho^2}{(\rho^2-z^2)^{3/2}}.
$$
For convenience, we put $q=\sqrt{\rho^2-z^2}$ for $-\rho\leq z\leq \rho$.
\begin{eqnarray*}
\overline{w}_{\tau}&-&\frac{\overline{w}_{zz}}{1+\overline{w}_z^2}+z\overline{w}_z-\overline{w}+\frac{n-1}{\overline{w}}-\sqrt{2}Ae^{-\tau}\sqrt{1+\overline{w}_z^2}\\
&=&-\frac{1}{q}+\frac{n-1}{C-q}-C+q+\frac{\rho^2-q^2}{q}-\sqrt{2}Ae^{-\tau}\frac{\rho}{q}\\
&=&-\frac{1}{q}+\frac{n-1}{C-q}-C+\frac{\rho^2}{q}-\sqrt{2}Ae^{-\tau}\frac{\rho}{q}\\
&=&\frac{1}{q(C-q)}\left(C q^2-(\rho^2+C^2-n)q-C+\rho^2C-\sqrt{2}Ae^{-\tau}\rho(C-q)\right).
\end{eqnarray*}

For $n>2$, if $\rho^2+C^2<n$ and $\rho<C$, we can deduce $C q^2-(\rho^2+C^2-n)q-C+\rho^2C$ of the right hand side of the formula above attains its minimum at $q=0$.
Consequently,
$$
\overline{w}_{\tau}-\frac{\overline{w}_{zz}}{1+\overline{w}_z^2}+z\overline{w}_z-\overline{w}+\frac{n-1}{\overline{w}}-\sqrt{2}Ae^{-\tau}\sqrt{1+\overline{w}_z^2}\geq \frac{1}{q(C-q)}(-C+\rho^2C-\sqrt{2}Ae^{-\tau}\rho C).
$$
Therefore, if $\rho>1$, then there exits $\tau_0(C,\rho)$ such that 
$$
\overline{w}_{\tau}-\frac{\overline{w}_{zz}}{1+\overline{w}_z^2}+z\overline{w}_z-\overline{w}+\frac{n-1}{\overline{w}}-\sqrt{2}Ae^{-\tau}\sqrt{1+\overline{w}_z^2}>0
$$
for $\tau>\tau_0$.

For $n=2$, $C q^2-(\rho^2+C^2-2)q-C+\rho^2C$ attains its minimum at $\frac{\rho^2+C^2-2}{2C}$. Consequently,
\begin{eqnarray*}
\overline{w}_{\tau}&-&\frac{\overline{w}_{zz}}{1+\overline{w}_z^2}+z\overline{w}_z-\overline{w}+\frac{1}{\overline{w}}-\sqrt{2}Ae^{-\tau}\sqrt{1+\overline{w}_z^2}\\
&\geq&
\frac{1}{q(C-q)}(-\frac{(C^2+\rho^2-2)^2}{4C}+(\rho^2-1)C-\sqrt{2}Ae^{-\tau}\rho C).
\end{eqnarray*}
Then fixed $\theta\in(0,1)$, for any $\epsilon_0\in(0,\frac{2}{9}\theta)$ and $1+\theta\epsilon_0<\rho<C<1+\epsilon_0$,  
$$
-\frac{(C^2+\rho^2-2)^2}{4C}+(\rho^2-1)C\geq \frac{8\theta\epsilon_0-36\epsilon_0^2}{4(1+\theta\epsilon_0)}=\frac{2\theta\epsilon_0-9\epsilon_0^2}{(1+\theta\epsilon_0)}>0.
$$
Therefore, there exists $\tau_0(C,\rho)>0$ such that 
$$
\overline{w}_{\tau}-\frac{\overline{w}_{zz}}{1+\overline{w}_z^2}+z\overline{w}_z-\overline{w}+\frac{1}{\overline{w}}-\sqrt{2}Ae^{-\tau}\sqrt{1+\overline{w}_z^2}>0
$$
for $\tau>\tau_0$.
\end{proof}

\begin{rem}
Under the condition $n>2$, in the proof of Proposition \ref{prop:supersolution}, for convenience, we assume 
$$
\rho^2+C^2<n.
$$
Indeed, it is not necessary. In the proof, we can use that $C q^2-(\rho^2+C^2-n)q-C+\rho^2C$ attains its minimum at $\frac{\rho^2+C^2-n}{2C}$.
\end{rem}

\begin{cor}\label{cor:selfsimilarsuper}
Let $\overline{w}$ and $\tau_0$ be given by Proposition \ref{prop:supersolution}. Then for $T<e^{-\tau_0}$, 
$$
\overline{u}(x,t;T)=\sqrt{2(T-t)}\overline{w}\left(\frac{x}{\sqrt{2(T-t)}}\right)
$$
is a super-solution of equation (\ref{eq:1graph}) for $-\rho\sqrt{2(T-t)}<x<\rho\sqrt{2(T-t)}$, $0<t<T$.
\end{cor}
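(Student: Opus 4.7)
The plan is to derive this as a direct consequence of the similarity transformation (\ref{eq:transformation1})--(\ref{eq:transformation2}) together with Proposition \ref{prop:supersolution}. The paper has already observed that this transformation sets up a bijection between solutions $u(x,t)$ of (\ref{eq:1graph}) and solutions $w(z,\tau)$ of (\ref{eq:shrinkingeq1}). What I need to verify is the analogous statement at the level of super-solutions, i.e., that the transformation carries (\ref{eq:1graph}) super-solutions to (\ref{eq:shrinkingeq1}) super-solutions and vice versa.

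First I would invert the transformation to write
\[
u(x,t)=\sqrt{2(T-t)}\,w\!\left(\tfrac{x}{\sqrt{2(T-t)}},\,-\tfrac12\ln(T-t)\right),
\]
so that the candidate $\overline{u}(x,t;T)=\sqrt{2(T-t)}\,\overline{w}(x/\sqrt{2(T-t)})$ is precisely the function attached to the $\tau$-independent profile $\overline{w}(z)$. Next I would redo the chain-rule computation the paper carried out when it stated the equivalence of (\ref{eq:1graph}) and (\ref{eq:shrinkingeq1}), but keeping track of inequalities. A direct calculation shows that
\[
u_t-\frac{u_{xx}}{1+u_x^2}+\frac{n-1}{u}-A\sqrt{1+u_x^2}
=\frac{1}{\sqrt{2(T-t)}}\!\left(w_\tau-\frac{w_{zz}}{1+w_z^2}+zw_z-w+\frac{n-1}{w}-\sqrt{2}Ae^{-\tau}\sqrt{1+w_z^2}\right),
\]
where the prefactor is strictly positive. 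Hence super-solution status is preserved exactly.

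Applying this to $\overline{w}$, which is independent of $\tau$, and invoking Proposition \ref{prop:supersolution} (so that the right-hand bracket is nonnegative for every $\tau>\tau_0$), I conclude that $\overline{u}$ is a super-solution of (\ref{eq:1graph}) at every $(x,t)$ whose image $(z,\tau)$ satisfies $-\rho<z<\rho$ and $\tau>\tau_0$. The range $-\rho<z<\rho$ unpacks to $-\rho\sqrt{2(T-t)}<x<\rho\sqrt{2(T-t)}$, matching the statement. For the time condition, $\tau=-\tfrac12\ln(T-t)$ ranges over $(-\tfrac12\ln T,\infty)$ as $t$ runs through $(0,T)$, so choosing $T$ small enough (the hypothesis $T<e^{-\tau_0}$, or if necessary $T<e^{-2\tau_0}$, guarantees $-\tfrac12\ln T\geq \tau_0$) ensures $\tau>\tau_0$ throughout, completing the argument.

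There is no real obstacle here beyond bookkeeping: the only mildly delicate point is making sure the chain-rule identity above is written so that the bracketed quantity on the right is exactly the residual of (\ref{eq:shrinkingeq1}) and the prefactor is manifestly positive, which is what lets the sign be transported without change. Once that identity is in place the corollary is immediate.
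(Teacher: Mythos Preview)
Your approach is correct and is exactly what the paper has in mind: the paper itself omits the proof entirely, writing only that the result ``is obvious by Proposition \ref{prop:supersolution}''. Your chain-rule identity is accurate, and you are right to flag the exponent --- the condition that actually forces $\tau=-\tfrac12\ln(T-t)>\tau_0$ for all $t\in(0,T)$ is $T<e^{-2\tau_0}$, not $T<e^{-\tau_0}$; this is a harmless slip in the statement since only smallness of $T$ is used downstream.
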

This result is obvious by Proposition \ref{prop:supersolution}. Here we omit the proof.

\begin{rem}\label{rem:envelop}
For $n=2$, recall $\gamma$ given in Section 1 and $\epsilon_0$ given by Proposition \ref{prop:supersolution}. Then for all $0\leq \gamma<\arctan \sqrt{(1+\epsilon_0)^2-1}$, we can choose $\rho$ and $C$ satisfying $1<\rho<C<1+\epsilon_0$ such that
$$
\gamma<\frac{\sqrt{C^2-\rho^2}}{\rho}.
$$

For $n>2$, if $0\leq\gamma<\arctan\sqrt{n-2}$, we can choose $\rho$, $C$ satisfying $1<\rho<C$ and $\rho^2+C^2<n$ such that
$$
\gamma<\frac{\sqrt{C^2-\rho^2}}{\rho}.
$$
It is obvious that the cone 
$$
|y|=\frac{\sqrt{C^2-\rho^2}}{\rho}|x|
$$
is the envelop of the family of hypersurfaces $\{|y|=\lambda\overline{w}(x/\lambda)\}_{\lambda>0}$.

By the property of $u_0$, we can get
$$
y=\frac{\sqrt{C^2-\rho^2}}{\rho}|x|> u_0(x),
$$
for small $|x|$.

Therefore, there exists $T(\rho,C)$ such that for all $0\leq T<T(\rho,C)$, 
$$
\overline{u}(x,0;T)>u_0(x),
$$
$-\rho\sqrt{2T}<x<\rho\sqrt{2T}$.
\end{rem}
\begin{proof}[Proof of Theorem \ref{thm:less90}] 
As mentioned in Remark \ref{rem:envelop}, we choose 
$$
\alpha_n=\left\{
\begin{array}{lcl}
\arctan\sqrt{(1+\epsilon_0)^2-1},\ n=2,\\
\arctan\sqrt{n-2},\ n>2.
\end{array}
\right.
$$ 
Obviously, $\alpha_n\rightarrow \pi/2$, as $n\rightarrow\infty$.

For every $\gamma<\alpha_n$, we choose $C$ and $\rho$ as in Remark \ref{rem:envelop}. Let $T_{\gamma}<\min\{T(\rho,C),e^{-\tau_0(\rho,C)}\}$, where $\tau_0(\rho,C)$ is given by Proposition \ref{prop:supersolution} and $T(\rho,C)$ is given by Remark \ref{rem:envelop}. Next we show that for every $0<t<T_{\gamma}$, the origin $O\in E(t)^c$. Let flow 
$$
\overline{\Gamma}(t;T)=\{(x,y)\mid|y|=\overline{u}(x,t,T), -\rho\sqrt{2(T-t)}\leq x\leq \rho\sqrt{2(T-t)}\},
$$
$0\leq t<T$. Remark \ref{rem:envelop} shows that for every $0<t<T_{\gamma}$,
$$
\overline{\Gamma}(0;t)\cap E(0)=\emptyset.
$$ 
By comparison principle, we can easily show that 
$$
\overline{\Gamma}(s;t)\cap E(s)=\emptyset,\ 0\leq s\leq t.
$$
(Noting that $\overline{\Gamma}(s,t)$ is a hypersurface with boundary, we must consider the comparison principle in interior and boundary separately. By comparison principle, $\partial E(s)$ can not touch $\overline{\Gamma}(s,t)$ interior. At the boundary, gradient of $\overline{u}(x,s,t)$ is infinity. Theorem \ref{thm:grad} and \ref{thm:gu} implies that $\partial E(s)$ can not touch $\overline{\Gamma}(s,t)$ at the boundary. The details are left to the reader) Especially, 
$$
\{O\}\cap E(t)=\overline{\Gamma}(t;t)\cap E(t)=\emptyset.
$$
Therefore for every $0<t<T_{\gamma}$, there holds $O\in E(t)^c$. 

Here we show that $E(t)$ is separated into two connected components, for $0<t<T_{\gamma}$. Let $E^+(t)$ ($E^-(t)$) and $U^+(t)$ ($U^-(t)$) be the outer evolution and inner evolution of $V=-\kappa+A$ with $E^+(0)=\overline{U}\cap\{x\geq0\}$ ($E^-(0)=\overline{U}\cap\{x\leq 0\}$) and $U^+(0)=U\cap\{x\geq0\}$ ($U^-(0)=U\cap\{x\leq0\}$), respectively.

Since $U\cap\{x\geq0\}$ and $U\cap \{x\leq 0\}$ are $\alpha$-domains and $E^+(t)\cap E^-(t)=\emptyset$, using Theorem \ref{thm:partialUmeancurvature}, we obtain 
$$
 \partial E^+(t)=\partial U^+(t),\ \partial E^-(t)=\partial U^-(t),\ 0<t<T_{\gamma}.
$$ 
Therefore 
$$
\partial E(t)=\partial E^+(t)\cup \partial E^-(t)=\partial U^+(t)\cup \partial U^-(t)=\partial U(t),\ 0<t<T_{\gamma}.
$$
Here we complete the proof.
\end{proof}

\begin{cor}\label{cor:nonexist}
Let $u_0$ be a function as in Section 1 and let $\gamma$ be the constant in (\ref{eqn:contactangle}). For $n\geq 2$ and $0\leq\gamma<\alpha_n$, there is no solution of the following free boundary problem,
\begin{equation}
u_t=\frac{u_{xx}}{1+u_x^2}-\frac{n-1}{u}+A\sqrt{1+u_x^2},\ -b(t)<x<b(t),\ 0<t<T_1,\tag{\ref{eq:1graph}}
\end{equation}
\begin{equation}
u(b(t),t)=u(-b(t),t)=0,\ u_x(-b(t),t)=-u_x(b(t),t)=\infty,\ 0<t<T_1,\tag{\ref{eq:1bounday}}
\end{equation}
\begin{equation}
u(x,0)=u_0(x),\ -b_0\leq x\leq b_0,\tag{\ref{eq:1initial}}
\end{equation}
\begin{equation}
u(x,t)>0,\ -b(t)< x< b(t),\ 0<t<T_1.\tag{\ref{eq:innerpositive}}
\end{equation}
\end{cor}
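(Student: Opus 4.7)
The plan is to argue by contradiction. Suppose $(u,b)$ is a solution of (\ref{eq:1graph})--(\ref{eq:innerpositive}) on some interval $[0,T_1)$ with $T_1>0$, and define the associated classical flow
$$
\Gamma^*(t)=\{(x,y)\in\mathbb{R}\times\mathbb{R}^n : |y|=u(x,t),\ -b(t)\leq x\leq b(t)\}, \qquad D^*(t)=\{(x,y) : |y|<u(x,t),\ -b(t)<x<b(t)\}.
$$
Then $\Gamma^*(t)=\partial D^*(t)$ is a smooth connected closed axisymmetric hypersurface for each $0<t<T_1$ evolving by $V=-\kappa+A$. By (\ref{eq:innerpositive}) and $-b(t)<0<b(t)$, the origin $O=(0,0,\dots,0)\in\mathbb{R}^{n+1}$ lies in the connected open set $D^*(t)$ for every $0<t<T_1$.

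Next I would compare this classical flow with the closed (outer) generalized evolution $E(t)$ of $\overline{U}$ furnished by the level set method. For each $\eta\in(0,T_1)$, $\overline{D^*(\eta)}$ is a smooth compact set, so by Theorem \ref{thm:openevolutionmeancurvature} together with the standard classical--viscosity consistency, the closed evolution starting from $\overline{D^*(\eta)}$ coincides with $\overline{D^*(\cdot)}$ on a short subsequent time interval. Combining this with Theorem \ref{thm:order}, the semigroup property (Theorem \ref{thm:semi}), and the monotone approximation $E_j\downarrow\overline{U}$ produced by Lemma \ref{lem:closeas} (using Theorem \ref{thm:mon}), one can let $\eta\downarrow 0$ and conclude
$$
\overline{D^*(t)}\subseteq E(t), \qquad 0<t<T_1.
$$
In particular, $O\in E(t)$ for every $0<t<T_1$.

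Finally, the proof of Theorem \ref{thm:less90} delivers exactly the opposite conclusion: under $n\geq 2$ and $0\leq\gamma<\alpha_n$, the compact self-similar super-solution $\overline{u}(\cdot,t;t)$ of Corollary \ref{cor:selfsimilarsuper}, combined with the cone-envelope estimate of Remark \ref{rem:envelop} and the gradient/symmetry control of Theorems \ref{thm:grad} and \ref{thm:gu} at the free boundary, confines $E(t)$ to a region with $O\in E(t)^c$ for every $0<t<T_\gamma$. Choosing any $t\in(0,\min\{T_1,T_\gamma\})$ yields the contradiction $O\in E(t)\cap E(t)^c=\emptyset$, which proves the corollary.

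The main technical obstacle is justifying the inclusion $\overline{D^*(t)}\subseteq E(t)$ across the singular initial instant $t=0$: since $\Gamma_0$ is not smooth at $O$, one cannot perturb $\overline{U}$ smoothly so as to separate it strictly from $\overline{D^*(0)}$ and apply Theorem \ref{thm:order}(4) off the shelf. The monotone approximation from Lemma \ref{lem:closeas} together with the classical evolution of $\overline{D^*(\eta)}$ for $\eta>0$ is precisely what is needed to transfer the classical--viscosity comparison from smooth positive-time slices down to the singular initial data; everything else in the proof is then routine.
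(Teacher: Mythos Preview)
Your proposal is correct and follows essentially the same route as the paper. Both argue by contradiction, both rely on the approximation $E_j\downarrow\overline{U}$ of Lemma~\ref{lem:closeas} to bridge the singular time $t=0$, and both finish by invoking the conclusion $O\in E(t)^c$ established in the proof of Theorem~\ref{thm:less90}.

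The only noteworthy difference is cosmetic: the paper asserts the full identification $E(t)=\{(x,y):|y|\leq u(x,t),\,-b(t)\leq x\leq b(t)\}$ (so $E(t)$ is connected) and contradicts this with the separation of $E(t)$ into two components, whereas you prove only the inclusion $\overline{D^*(t)}\subset E(t)$ and extract the pointwise contradiction $O\in E(t)\cap E(t)^c$. Your version is in fact marginally more economical, since proving equality in the paper's formulation implicitly requires a uniqueness statement for the free boundary problem (via Proposition~\ref{pro:uniq2}), while your one-sided inclusion does not.
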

\begin{proof}
Assume that there exists the solution $(u,b)$ of the free boundary problem. We can use the approximate argument similarly as in Lemma \ref{lem:closeas} to prove that the outer evolution $E(t)$ is written as follows
$$
 E(t)=\{(x,y)\mid\mathbb{R}^{n+1}\mid |y|\leq u(x,t), -b(t)\leq x\leq b(t)\}.
$$
This contradicts that $E(t)$ is separated into two connected components.
\end{proof}

\section{Proof of Theorem \ref{thm:less90in2dim}}
In this section, we give the proof of Theorem \ref{thm:less90in2dim}. 

\begin{prop}\label{prop:outerless}(Connected Outer evolution) Let $u_0$ be a function as in Section 1. In the plane, there is $T_1>0$ such that $(u,b)$ is a unique solution of the following free boundary problem,
\begin{equation}
u_t=\frac{u_{xx}}{1+u_x^2}+A\sqrt{1+u_x^2},\ -b(t)<x<b(t),\ 0<t<T_1,\tag{\ref{eq:1graph}*}
\end{equation}
\begin{equation}
u(b(t),t)=u(-b(t),t)=0,\ u_x(-b(t),t)=-u_x(b(t),t)=\infty,\ 0<t<T_1,\tag{\ref{eq:1bounday}}
\end{equation}
\begin{equation}
u(x,0)=u_0(x),\ -b_0\leq x\leq b_0,\tag{\ref{eq:1initial}}
\end{equation}
\begin{equation}
u(x,t)>0,\ -b(t)< x< b(t),\ 0<t<T_1.\tag{\ref{eq:innerpositive}}
\end{equation}

Moreover, the outer evolution
$$
E(t)=\{(x,y)\in\mathbb{R}^2\mid|y|\leq u(x,t), -b(t)\leq x\leq b(t)\},\ 0<t<T_1.
$$
\end{prop}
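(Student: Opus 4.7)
The plan is to construct $(u,b)$ as a monotone limit of classical solutions with regularized initial data and then identify the resulting region under the graph with the outer evolution $E(t)$.

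First, I regularize $u_0$ from above. Since $0\le\gamma<\pi/2$, the slopes of $u_0$ at the origin are finite, so I pick an even sequence $u_0^j\in C^\infty([-b_0,b_0])$ with $u_0^j\ge u_0$, $u_0^j\downarrow u_0$ uniformly, $u_0^j(\pm b_0)=0$ with vertical tangents preserved, and $u_0^j(0)=1/j>0$. Then $U_j=\{(x,y)\in\mathbb{R}^2:|y|<u_0^j(x)\}$ is smooth except at the two endpoints on the $x$-axis where $\partial U_j$ is vertical, and classical local existence for the planar axisymmetric $V=-\kappa+A$ free boundary problem yields a smooth solution $(u^j,c^j)$ on some interval $[0,T_j)$. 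The crucial point is that for $n=1$ the singular term $(n-1)/u$ is absent from (\ref{eq:1graph}), so the equation remains regular when $u$ is small; combined with a small-disc lower barrier, each $T_j$ is bounded below by a common $T_1>0$.

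Second, I pass to the limit. By the order preservation property of Theorem \ref{thm:order} applied to the nested initial sets $U_{j+1}\subset U_j$, one gets $u^{j+1}\le u^j$ and $c^{j+1}\le c^j$ on their common domain. Theorem \ref{thm:grad} gives a uniform gradient bound $|u^j_x|\le\sigma(t,u^j)$, and Corollary \ref{cor:es} together with Remark \ref{rem:hes} upgrades this to uniform interior $C^{k,\alpha}$ bounds on any compact subset of the limiting positivity region. Monotone convergence yields a limit $(u,b)=\lim_j(u^j,c^j)$ that is a classical solution of (\ref{eq:1graph})--(\ref{eq:innerpositive}) on $[0,T_1)$.

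Third, for the representation of $E(t)$, let $E_j(t)$ be the outer evolution of $\overline{U_j}$. Since $E_j(0)=\overline{U_j}\downarrow\overline U=E(0)$, Theorem \ref{thm:mon}(2) gives $E_j(t)\downarrow E(t)$; because each approximate flow stays graphical, $E_j(t)=\{(x,y):|y|\le u^j(x,t),\,-c^j(t)\le x\le c^j(t)\}$, and passing to the limit in $j$ gives the claimed representation. Uniqueness of $(u,b)$ then follows immediately, since any other solution $(\tilde u,\tilde b)$ of the free boundary problem would represent the same closed set $E(t)$, forcing $\tilde u=u$ and $\tilde b=b$. The main obstacle I expect is the uniform lower bound on the existence time together with the verification that the approximate interfaces neither pinch nor touch the $x$-axis in the interior during $[0,T_1)$; the positivity of the driving force pushes the endpoints outward, and a circular barrier argument analogous to Lemma \ref{lem:alphadn} (using discs in place of cylinders) delivers both the uniform existence time and the strict positivity of the limit $u$ on $(-b(t),b(t))$.
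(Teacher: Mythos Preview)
Your approach---approximate $\overline{U}$ from outside by smooth $\alpha_j$-domains, obtain uniform estimates on the evolutions, and pass to the limit via Theorem~\ref{thm:mon} to identify the limit with $E(t)$---is essentially the paper's proof, which invokes the approximation scheme of Lemma~\ref{lem:closeas} and refers to \cite{Z} for the details.

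The one point that needs sharpening is your description of the barrier that yields the uniform existence time and the strict positivity of the limit. In the plane the analogue of the cylinder $C_\rho$ in Lemma~\ref{lem:alphadn} is the horizontal strip $\{|y|<\rho\}$, whose boundary lines have zero curvature and hence move outward at constant speed $A$; this is precisely Lemma~\ref{lem:alphad2}, and it gives that each approximating evolution is an $(\alpha_j+At)$-domain, so in the limit $E(t)^\circ$ is an $At$-domain and $u(0,t)\ge At$. A ``disc'' barrier cannot do this job near the origin: a small circle evolves by $\epsilon'(t)=A-1/\epsilon(t)$ and shrinks if $\epsilon<1/A$, so it does not prevent pinching. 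The small disc you mention is only the auxiliary object (as in the proof of Lemma~\ref{lem:alphadn}) guaranteeing that the domain does not vanish entirely; the positivity mechanism is the line barrier.
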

To prove this proposition, we need the following lemma.

\begin{lem}\label{lem:alphad2}(Lemma 4.7 in \cite{Z}) Let $U$ be an $\alpha$-domain in $\mathbb{R}^2$ and let $U(t)$ be the open evolution with $U(0)=U$. Then there exists $t_U>0$ such that $U(t)$ is an $(\alpha+At)$-domain, $0<t<t_U$.
\end{lem}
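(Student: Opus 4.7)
The plan is to mirror the proof of Lemma~\ref{lem:alphadn} in the planar case $n=1$. The crucial simplification in $\mathbb{R}^2$ is that the ``cylinder'' $C_\rho=\{|y|<\rho\}$ has flat boundary lines with $\kappa\equiv 0$, so under the flow $V=-\kappa+A$ these lines translate at constant speed $A$; this matches $\alpha'(t)=A$ and gives $\alpha(t)=\alpha+At$. I will construct an auxiliary disc sitting inside $U$ and disjoint from $\overline{C_\alpha}$, push it forward by the flow to get a disc inside $U(t)$ that remains above the expanding strip $\overline{C_{\alpha+At}}$, and then use a planar intersection-number argument to control exactly how $\partial U(t)$ meets each line $\{y=\rho\}$ for $0<\rho\leq\alpha+At$.

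Since $U$ is an $\alpha$-domain with transverse caps at height $\alpha$, necessarily $\max u_0>\alpha$, so I can pick $P=(x_0,y_0)\in U$ with $y_0>\alpha$ and a disc $B_\epsilon(P)\subset U\setminus\overline{C_\alpha}$. Exactly as in the proof of Lemma~\ref{lem:alphadn}, order preservation (Theorem~\ref{thm:order}(1)) together with the separation property (Theorem~\ref{thm:sep}) applied to the pair consisting of the disc (as initial data for the open evolution of $V=-\kappa-A$) and the strip $\overline{C_\alpha}$ (whose closed evolution under $V=-\kappa+A$ is $\overline{C_{\alpha+At}}$, since $\kappa\equiv 0$ on its boundary) yields
\[
B_{\epsilon(t)}(P)\subset U(t)\quad\text{and}\quad B_{\epsilon(t)}(P)\cap\overline{C_{\alpha+At}}=\emptyset,
\]
where $\epsilon(t)$ solves $\epsilon'(t)=-A-1/\epsilon(t)$ with $\epsilon(0)=\epsilon$. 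Take $t_U$ smaller than both the lifetime of this ODE and $\alpha/A$; the second bound is needed in the next step.

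For the intersection count, fix $t_0\in(0,t_U)$ and $\rho\in(0,\alpha+At_0]$, and introduce the evolving horizontal line $\ell(s):=\{y=\rho(s)\}$ with $\rho(s)=\rho+A(s-t_0)$, so that $\rho(t_0)=\rho$ and $|\rho(0)|=|\rho-At_0|\leq\alpha$ (by $t_U<\alpha/A$). Since $\kappa\equiv 0$ on $\ell(s)$ and $\rho'(s)=A$, the line $\ell(s)$ is itself a solution of $V=-\kappa+A$. The planar Sturm/Angenent intersection-number principle---applied in the spirit of Theorem~\ref{thm:intersection1}, or equivalently via the linear parabolic equation satisfied by the difference of the two graph solutions $u(x,s)$ (upper profile of $\partial U(s)$) and $\rho(s)$ of the planar graph equation $u_t=u_{xx}/(1+u_x^2)+A\sqrt{1+u_x^2}$ (Theorem~D of \cite{A1})---tells us that the number of intersections of $\partial U(s)$ with $\ell(s)$ is non-increasing in $s$. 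At $s=0$ the $\alpha$-domain property forces this count to be at most two, so at $s=t_0$ the line $\{y=\rho\}$ meets $\partial U(t_0)$ at most twice. Conversely, because $B_{\epsilon(t_0)}(P)\subset U(t_0)$ sits strictly above $\{y=\alpha+At_0\}$, the upper profile of $U(t_0)$ exceeds $\alpha+At_0\geq\rho$ at some interior $x$ while vanishing at the two tips of $\partial U(t_0)$ on the $x$-axis, so the intermediate value theorem gives at least two crossings of $\{y=\rho\}$. Hence exactly two, transverse (any non-transverse intersection would cause a strict drop by Angenent's simplicity theorem). The reflection $y\mapsto-y$ handles $\{y=-\rho\}$, so $U(t)$ is an $(\alpha+At)$-domain for $0<t<t_U$.

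The main technical obstacle is the subtlety for small $\rho$ with $\rho-At_0\leq 0$: the backward-evolved line $\ell(0)$ then lies on or below the $x$-axis, so the ``two initial intersections'' actually occur on the lower profile $y=-u_0(x)$ rather than the upper one. This is handled by interpreting the intersection count as the number of crossings of the full closed curve $\partial U(s)$ with the entire infinite line $\ell(s)$---that total is two in either sign regime---and the auxiliary-disc lower bound then pins the two crossings of $\partial U(t_0)\cap\ell(t_0)$ to the upper half. A secondary point, that $\partial U(s)$ is a smooth closed curve for $s\in(0,t_U)$, is standard short-time parabolic regularity applied to the smooth initial boundary of the $\alpha$-domain $U$.
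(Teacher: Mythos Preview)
Your argument faithfully mirrors the proof of Lemma~\ref{lem:alphadn} and correctly isolates the planar simplification $\alpha'(t)=A$. The auxiliary-disc step and the intersection count for $\rho>At_0$ (where the backward line height $\rho(0)=\rho-At_0$ remains positive, so Theorem~D of \cite{A1} applies to $u(\cdot,s)-\rho(s)$ with the correct boundary sign) are both sound and match the $n\geq 2$ template. Note that the paper itself does not prove this lemma---it simply cites \cite{Z}---so there is no in-paper argument to compare against beyond the analogy with Lemma~\ref{lem:alphadn}.

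The genuine gap is your treatment of $0<\rho\leq At_0$. Your fix---counting crossings of the \emph{full} closed curve $\partial U(s)$ with the moving line $\ell(s)$ and asserting this total is non-increasing---is not justified by Theorem~D or by Theorem~\ref{thm:intersection1}. The flow $V=-\kappa+A$ is not orientation-free (Remark~\ref{rem:ori}): on the lower arc $y=-u(x,s)$ the outward normal points downward, so as a graph it satisfies $g_t=g_{xx}/(1+g_x^2)-A\sqrt{1+g_x^2}$, whereas the line $y=c(s)$ with $c'=A$ satisfies the $+A$ version. Their difference $w=g-c$ obeys
\[
w_t=\frac{w_{xx}}{1+w_x^2}-A\bigl(\sqrt{1+w_x^2}+1\bigr),
\]
which carries the nonzero inhomogeneity $-2A$ at $w_x=0$ and therefore is \emph{not} of the linear form $a\,w_{xx}+b\,w_x+c\,w$ required by Angenent's zero-number theorem. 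Concretely, a tangency between $\ell(s)$ and the lower arc at a point where the arc lies above the line can create two new intersections, so the ``total two'' need not persist through the crossing time $s=t_0-\rho/A$. The $n\geq 2$ proof you are mirroring relies essentially on $\rho(t-t_0)>0$ for all $t\in[0,t_0]$ (guaranteed there by the term $-(n-1)/\rho$ in the cylinder ODE), and your device does not repair its failure when $n=1$.
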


\begin{proof}
Using the approximate argument as in Lemma \ref{lem:closeas}, we can prove that there exists $T_1>0$ such that $E(t)^{\circ}$ is $At$-domain, $0<t<T_1$. Moreover, 
$$
E(t)=\{(x,y)\in\mathbb{R}^2\mid|y|\leq u(x,t), -b(t)\leq x\leq b(t)\},\ 0\leq t<T_1.
$$
Here $(u,b)$ is the unique solution of (\ref{eq:1graph}*), (\ref{eq:1bounday}), (\ref{eq:1initial}), (\ref{eq:innerpositive}). For the precise proof, we can see \cite{Z} similarly. Here we omit the details.
\end{proof}

\begin{rem}\label{rem:positiveouter}
Indeed, it is determined by the existence of the solution $(u,b)$ of (\ref{eq:1graph}*), (\ref{eq:1bounday}), (\ref{eq:1initial}), (\ref{eq:innerpositive}) whether the outer evolution is connected or separated. 

Under the condition $n=1$, saying roughly, if $u$ satisfies 
\begin{equation}
u_t=\frac{u_{xx}}{1+u_x^2}+A\sqrt{1+u_x^2}, \tag{\ref{eq:1graph}*}
\end{equation} 
and $u(x,0)=u_0(x)\geq0$, by comparison principle, $u(x,t)>0$, $t>0$. This means that the problem always has a ``positive'' solution in the plane. This can be explained precisely by Lemma \ref{lem:alphad2}

However, under the conditon $n\geq2$, the equation
\begin{equation}
u_t=\frac{u_{xx}}{1+u_x^2}-\frac{n-1}{u}+A\sqrt{1+u_x^2} \tag{\ref{eq:1graph}}
\end{equation}
has the ``contraction power $-\frac{n-1}{u}$''. We can not ensure that the problem has a ``positive'' solution with $u(x,0)=u_0(x)\geq0$. Lemma \ref{lem:subsolution} shows that if $\gamma=\pi/2$, this problem has a unique ``positive'' solution. 
\end{rem}

\begin{prop}\label{prop:innerless}(Separated inner evolution)
Suppose $0\leq \gamma<\pi/2$. Let $(u,a,b)$ be the solution of the following problem
\begin{equation}
u_t=\frac{u_{xx}}{1+u_x^2}+A\sqrt{1+u_x^2},\ a(t)<x<b(t),\ 0<t<T_1,\tag{\ref{eq:1graph}**}
\end{equation}
\begin{equation}
u(b(t),t)=u(a(t),t)=0,\ u_x(a(t),t)=-u_x(b(t),t)=\infty,\ 0<t<T_1,\tag{\ref{eq:1bounday}*}
\end{equation}
\begin{equation}
u(x,0)=u_0(x),\ 0\leq x\leq b_0,\tag{\ref{eq:1initial}*}
\end{equation}
\begin{equation}
u(x,t)>0,\ a(t)< x< b(t),\ 0<t<T_1.\tag{\ref{eq:innerpositive}*}
\end{equation}
Then there exists $\delta>0$ such that $a(t)>0$, $0<t<\delta$. Moreover, the inner evolution $U(t)$ can be written as follows,
$$
U(t)=U^+(t)\cup U^-(t),\ 0<t<\delta,
$$
where $U^+(t)=\{(x,y)\in\mathbb{R}^2\mid|y|<u(x,t),\ a(t)\leq x\leq b(t)\}$ and $U^-(t)=\{(-x,y)\mid (x,y)\in U^+(t)\}$.
\end{prop}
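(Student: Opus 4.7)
The plan is to reduce the proposition to two claims: (i) the left free boundary satisfies $a(t)>0$ for small $t>0$; (ii) the decomposition $U(t)=U^+(t)\cup U^-(t)$ holds. Claim (ii) is an immediate consequence of (i): by symmetry of $u_0$ across $x=0$, the reflection of $(u,a,b)$ solves the analogous free boundary problem on the left half, so the reflected region $U^-(t)$ has rightmost endpoint $-a(t)<0$. Hence $U^+(t)\subset\{x>0\}$ and $U^-(t)\subset\{x<0\}$ are disjoint, and since $U^+(0)\cup U^-(0)=U(0)$, Lemma~\ref{lem:sep} gives $U(t)=U^+(t)\cup U^-(t)$.

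The central task is thus to establish $a(t)>0$, which expresses the fact that when $\gamma<\pi/2$ the convex corner of $\partial U^+(0)$ at the origin instantaneously retracts into the domain. The approach is to construct an explicit classical super-solution of the graph equation (\ref{eq:1graph}**) that dominates $u$ from above near the left free boundary. Fix $R_0\in(0,1/A)$ with $2R_0<b_0$, and let $R(t)$ solve $R'(t)=A-1/R(t)$ with $R(0)=R_0$, so that $R(t)<R_0$ on a time interval $[0,\tau]$ and the disk of radius $R(t)$ centered at $(R_0,0)$ evolves by $V=-\kappa+A$. The upper half-circle
\[
\bar u(x,t)=\sqrt{R(t)^2-(x-R_0)^2},\qquad R_0-R(t)\le x\le R_0+R(t),
\]
is an exact classical solution of (\ref{eq:1graph}**) on its open support, with vertical tangent at both ends, and its left edge $R_0-R(t)$ equals $0$ at $t=0$ and is strictly positive for $t>0$.

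Since $\gamma<\pi/2$, $u_0(x)\sim x\tan\gamma$ as $x\to 0^+$ while $\bar u(x,0)=\sqrt{2R_0x-x^2}\sim\sqrt{2R_0x}$, so $\bar u(\cdot,0)>u_0$ on a maximal interval $(0,x_0]$ with $\bar u(x_0,0)=u_0(x_0)$. A parabolic maximum principle argument on a region of the form $\{(x,t):0<t<\tau,\ R_0-R(t)\le x\le x_*(t)\}$, where $x_*(t)$ is the first zero of $\bar u(\cdot,t)-u(\cdot,t)$ to the right of $R_0-R(t)$ and stays close to $x_0$ for small $t$, compares $u$ and $\bar u$: at $t=0$ one has $u_0\le\bar u(\cdot,0)$, at $x=x_*(t)$ the two functions agree, and at $x=R_0-R(t)$ both vanish (after extending $u$ by $0$ to $x<a(t)$). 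Hence $u(x,t)\le\bar u(x,t)$ on this region, and since $u>0$ only on $(a(t),b(t))$, we conclude $a(t)\ge R_0-R(t)>0$ on $(0,\tau]$.

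The main obstacle is executing this comparison fully rigorously. The barrier $\bar u$ dominates $u_0$ only locally, so tracking the moving right edge $x_*(t)$ requires either a continuity / implicit-function argument or, more cleanly, an intersection-number argument using Theorem~\ref{thm:intersection1} applied to the closed curves $\partial U^+(t)$ and the evolving circle obtained by reflecting $\bar u$ across the $x$-axis; the initial bound on the number of intersections is then preserved, which locates $\partial U^+(t)$ relative to the circle and yields $a(t)\ge R_0-R(t)$. The wedge singularity at $(0,0)$ also forces one to supplement the classical comparison at the initial time either by invoking the viscosity framework of Section~2 or by monotone approximation of $u_0$ by smoother initial data and passage to the limit through Theorem~\ref{thm:mon}.
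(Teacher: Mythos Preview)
Your reduction of the proposition to the claim $a(t)>0$ via Lemma~\ref{lem:sep} matches the paper's first step exactly, and your geometric idea---a small shrinking circle tangent to the $x$-axis at the origin dominates the corner $u_0\sim x\tan\gamma$ and therefore forces the left free boundary to the right---is correct and yields the same quantitative conclusion $a(t)\gtrsim ct$.

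The paper's route to $a(t)>0$ is different in execution, though it too rests on circle barriers. Rather than working with the horizontal graph and a single circle, the paper switches to the vertical graph $x=v(y,t)$ for the left cap of $\Lambda^+(t)$, which satisfies $v_t=\dfrac{v_{yy}}{1+v_y^2}-A\sqrt{1+v_y^2}$, and compares it with the solution started from the cone $x=|y|\cot\gamma_1$ for some $\gamma<\gamma_1<\pi/2$. To handle the cone's vertex, the paper replaces it by a one-parameter family $\{v_\lambda\}$ of circular arcs inscribed in the cone (their envelope), observes that $\partial_t v_\lambda(0,0)=\dfrac{1}{\lambda C\sin\gamma_1}-A\to\infty$ as $\lambda\to0$, and passes to the limit $\lambda\to0$ to obtain $a(t)\ge Ct$.

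Your single-circle comparison is more direct, but the price is exactly the obstacle you flag: the degenerate initial contact at the origin (both endpoints coincide, and $\Lambda_0$ has a corner there) makes both the moving-domain maximum principle and Theorem~\ref{thm:intersection1} delicate at $t=0$; one must perturb the circle center to $(R_0+\varepsilon,0)$ or approximate $\Lambda_0$ by smooth curves before counting intersections. The paper's vertical-graph formulation sidesteps this by building the desingularization directly into the family $\{v_\lambda\}$ and letting $\lambda\to0$. Both routes are valid and both are presented at sketch level; yours is geometrically cleaner, the paper's isolates the singular limit more explicitly.
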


\begin{proof}
We claim that there exists $\delta>0$ such that $a(t)>0$ for $0<t<\delta$. If the claim holds, $U^+(t)\cap U^-(t)=\emptyset$, $0<t<\delta$. Using Lemma \ref{lem:sep}, $U(t)=U^+(t)\cap U^-(t)$, $0<t<\delta$.

We give the sketch of the proof of the claim.  

Let $\gamma<\gamma_1<\pi/2$. Define a family of circles 
$$
v_{\lambda}(y)=\lambda C-\sqrt{(\lambda C\cos(\pi/2-\gamma_1))^2-y^2}.
$$
It is easy to find that the envelop of $\{v_{\lambda}\}_{\lambda>0}$ is $|y|=\tan\gamma_1 x$.

Let $\{(x,y)\mid x=v_0(y), -\delta_0<y<\delta_0\}$ be the left cap of $\partial U\cap\{x\geq0\}$.

By the choice of $\gamma_1$, if necessary, choose $\delta_0$ smaller such that 
$$
v_0(y)\leq \tan(\pi/2-\gamma_1)|y|,\ -\delta_0<y<\delta_0.
$$ 
Consider the following inverse equation
$$
v_t=\frac{v_{yy}}{1+v_y^2}-A\sqrt{1+v_y^2},\ -\delta_0<y<\delta_0,
$$
with $v(y,0)=\tan(\pi/2-\gamma_1)|y|$. Since the initial function is not smooth, we modify it by the family $\{v_{\lambda}\}_{\lambda>0}$ near the origin. Let $v_{\lambda}(y,t)$ be the solution with the modified initial data. We calculate 
$$
\frac{\partial}{\partial t}v_{\lambda}(0,0)=\frac{1}{\lambda C\cos(\pi/2-\gamma_1)}-A\rightarrow\infty,
$$
as $\lambda\rightarrow0$. Therefore, there exists constant $C>0$ such that $v_{\lambda}(0,t)>Ct$, for $t$ small. It is easy to see $v_{\lambda}(0,t)\rightarrow a(t)$, for every $t$ small. Then
$$
a(t)>Ct,\ 0<t<\delta,
$$
for some $\delta$. Here we complete the proof.

\end{proof}
\begin{proof}[Proof of Theorem \ref{thm:less90in2dim}]
This result is an easy consequence of Proposition \ref{prop:outerless} and Proposition \ref{prop:innerless}.
\end{proof}

\section{Appendix }
In this section, we prove there exists a unique smooth family of smooth hypersurfaces $\Gamma(t)$ satisfying
\begin{equation}\label{eq:hcur}
V=-\kappa+A,\ \text{on}\ \Gamma(t)\subset \mathbb{R}^{n+1},
\end{equation}
where $\Gamma(0)=\partial U$ and $U$ is an $\alpha$-domain.

Since $\partial U$ is not necessary smooth, we also use the level set method and prove the interface evolution is not fattening.

For $\alpha$-domain $U$, we choose a smooth vector field $X:\mathbb{R}^{n+1}\rightarrow\mathbb{R}^{n+1}$ such that
\\
(i) At any point $P\in\partial U$ which is not on the $x$-axis $\langle X(P), \textbf{n}(P)\rangle>0$, where $\textbf{n}$ is the inward unit normal vector at $P$.
\\
(ii) Near the two end points of $\partial U$, $X$ is constant vector with $X\equiv\pm e_0=(\pm1,0,\cdots,0).$

Since $X\neq0$ on the compact set $\partial U$, there are a constant $\delta>0$ and an open neighbourhood $V\supset\partial U$ on which $|X|\geq\delta>0$.

\begin{figure}[htbp]
	\begin{center}
            \includegraphics[height=4cm]{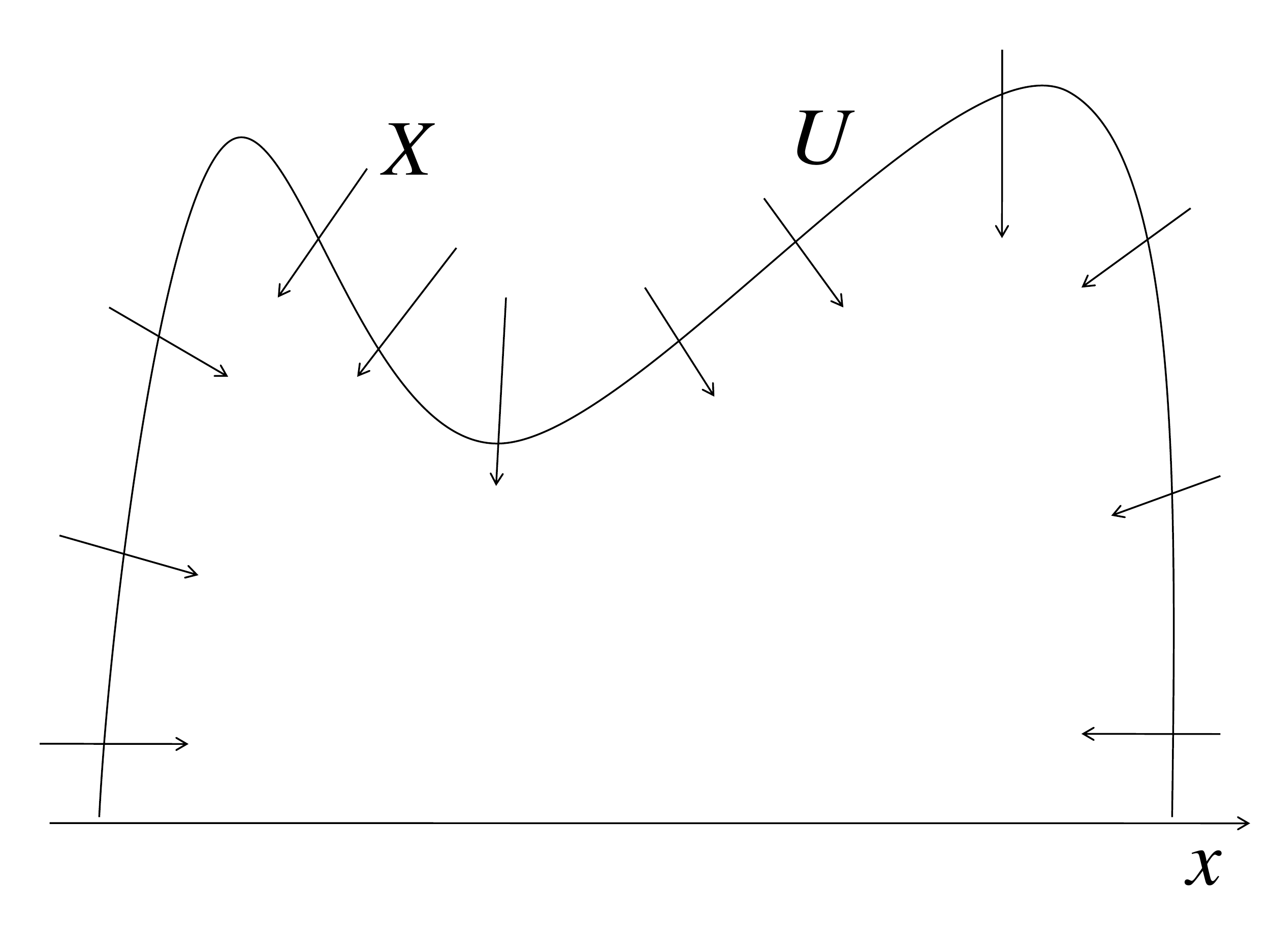}
		\vskip 0pt
		\caption{Vector field $X$}
        \label{fig:vectorX}
	\end{center}
\end{figure}

\begin{prop}\label{pro:sigma1}For sufficiently small $\rho>0$, there exists a smooth hypersurface $\Sigma\subset V$ with\\
(i) $X(P)\notin T_P\Sigma$ at all $P\in\Sigma$, i.e., $\Sigma$ is transverse to the vector field $X$.\\
(ii) $\Sigma=\partial U$ in $\{(x,y)\in\mathbb{R}\times\mathbb{R}^{n}\mid |y|\geq2\rho\}$.\\
(iii) $\Sigma\cap\{(x,y)\in\mathbb{R}\times\mathbb{R}^{n}\mid|y|\leq\rho\}$ consists of two flat disks $\Delta_a=\{(a,y)\in\mathbb{R}\times\mathbb{R}^{n}\mid|y|\leq\rho\}$ and $\Delta_b=\{(b,y)\in\mathbb{R}\times\mathbb{R}^{n}\mid|y|\leq\rho\}$ for some $a<b$.
\end{prop}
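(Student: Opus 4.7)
The plan is to mimic the construction in Proposition \ref{pro:sigma2}, but now modifying $\partial U$ only in a small neighborhood of the two endpoints on the $x$-axis rather than near an interior singularity. Since $U$ is an $\alpha$-domain, condition (4) of Definition \ref{def:alphad} guarantees that for some $\alpha>0$ the caps of $\partial U$ inside the cylinder $C_\alpha$ can be written as vertical graphs
$$
\partial U \cap C_\alpha = \{(x,y)\mid x = v_1(|y|),\ |y|\le \alpha\}\cup\{(x,y)\mid x = v_2(|y|),\ |y|\le\alpha\},
$$
with $v_i$ smooth on $(0,\alpha]$, $v_1(0)=a_1<a_2=v_2(0)$, $v_1'>0$ and $v_2'<0$ on $(0,\alpha]$. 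I would choose $\rho>0$ so small that $2\rho<\alpha$, that the cylinder $C_{2\rho}$ is contained in the region where $X\equiv\pm e_0$ (by hypothesis (ii) on $X$), and that the slab $\{a_1\le x\le a_1+\eta\}\cup\{a_2-\eta\le x\le a_2\}$ with $\eta:=\max_{r\le 2\rho}|v_i(r)-v_i(\rho)|$ lies inside the chosen neighborhood $V\supset\partial U$; each of these can be arranged because $v_i$ is continuous up to $r=0$.

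Next I would fix a smooth cutoff $\chi:[0,\infty)\to[0,1]$ with $\chi\equiv 0$ on $[0,\rho]$ and $\chi\equiv 1$ on $[2\rho,\infty)$, set $a:=v_1(\rho)$, $b:=v_2(\rho)$, and define
$$
\tilde v_i(r):=(1-\chi(r))\,v_i(\rho)+\chi(r)\,v_i(r),\qquad i=1,2,
$$
so that $\tilde v_i(r)=v_i(\rho)$ for $r\le\rho$ and $\tilde v_i(r)=v_i(r)$ for $r\ge 2\rho$. The desired $\Sigma$ is then the rotationally symmetric hypersurface obtained by taking $\partial U\cap\{|y|\ge 2\rho\}$ and gluing to it, near each endpoint, the piece $\{(\tilde v_i(|y|),y)\mid|y|\le 2\rho\}$. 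By construction $\Sigma$ coincides with $\partial U$ for $|y|\ge 2\rho$ and with the flat disks $\Delta_a,\Delta_b$ for $|y|\le\rho$, giving (ii) and (iii); the smoothness across the axis is automatic because $\tilde v_i$ is constant there, and the smoothness at $r=\rho$ and $r=2\rho$ follows from the smoothness of $\chi$.

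It remains to verify transversality to $X$ and containment in $V$. On $\partial U\cap\{|y|\ge 2\rho\}$ transversality is condition (i) on $X$. On the flat disks $\Delta_a,\Delta_b$ the surface is orthogonal to $e_0$ while $X=\pm e_0$, so $X$ is normal and in particular not tangent. On the transition annuli $\rho\le|y|\le 2\rho$ the surface is a rotationally symmetric vertical graph $x=\tilde v_i(|y|)$, so a unit normal has the form $(1,-\tilde v_i'(|y|)\hat y)/\sqrt{1+\tilde v_i'(|y|)^2}$ up to sign, and $\langle X,n\rangle=\pm 1/\sqrt{1+\tilde v_i'(|y|)^2}\ne 0$ since $X=\pm e_0$ in this region; hence (i) holds. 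The principal technical point is ensuring that the transition cap actually lies inside $V$ and inside the region where $X=\pm e_0$; both follow from the fact that, for $\rho\downarrow 0$, the $C^0$-displacement of $\tilde v_i$ from $v_i$ is controlled by $|v_i(2\rho)-v_i(\rho)|\to 0$, so every fixed neighborhood of the endpoints eventually contains the modified caps. Nothing else in the construction is delicate; it is purely a cutoff argument adapted from the proof of Proposition \ref{pro:sigma2}.
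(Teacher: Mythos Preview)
Your proposal is correct and follows essentially the same approach as the paper, which simply refers back to the construction in Proposition~\ref{pro:sigma2}: in both cases one writes the caps of $\partial U$ inside $C_\alpha$ as vertical graphs $x=v_i(|y|)$ and flattens them near the axis via a smooth interpolation, producing the disks $\Delta_a$, $\Delta_b$ and leaving $\partial U$ untouched for $|y|\ge 2\rho$. Your use of an explicit cutoff $\chi$ and your explicit verification of the transversality condition on each piece (in particular the computation $\langle X,n\rangle=\pm 1/\sqrt{1+\tilde v_i'(|y|)^2}$ on the transition annuli, using that $X=\pm e_0$ there) make the argument slightly more detailed than the paper's, but there is no substantive difference.
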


Seeing Figure \ref{fig:sigma1}, this proposition can be proved as in Proposition \ref{pro:sigma2}.

\begin{figure}[htbp]
	\begin{center}
            \includegraphics[height=4cm]{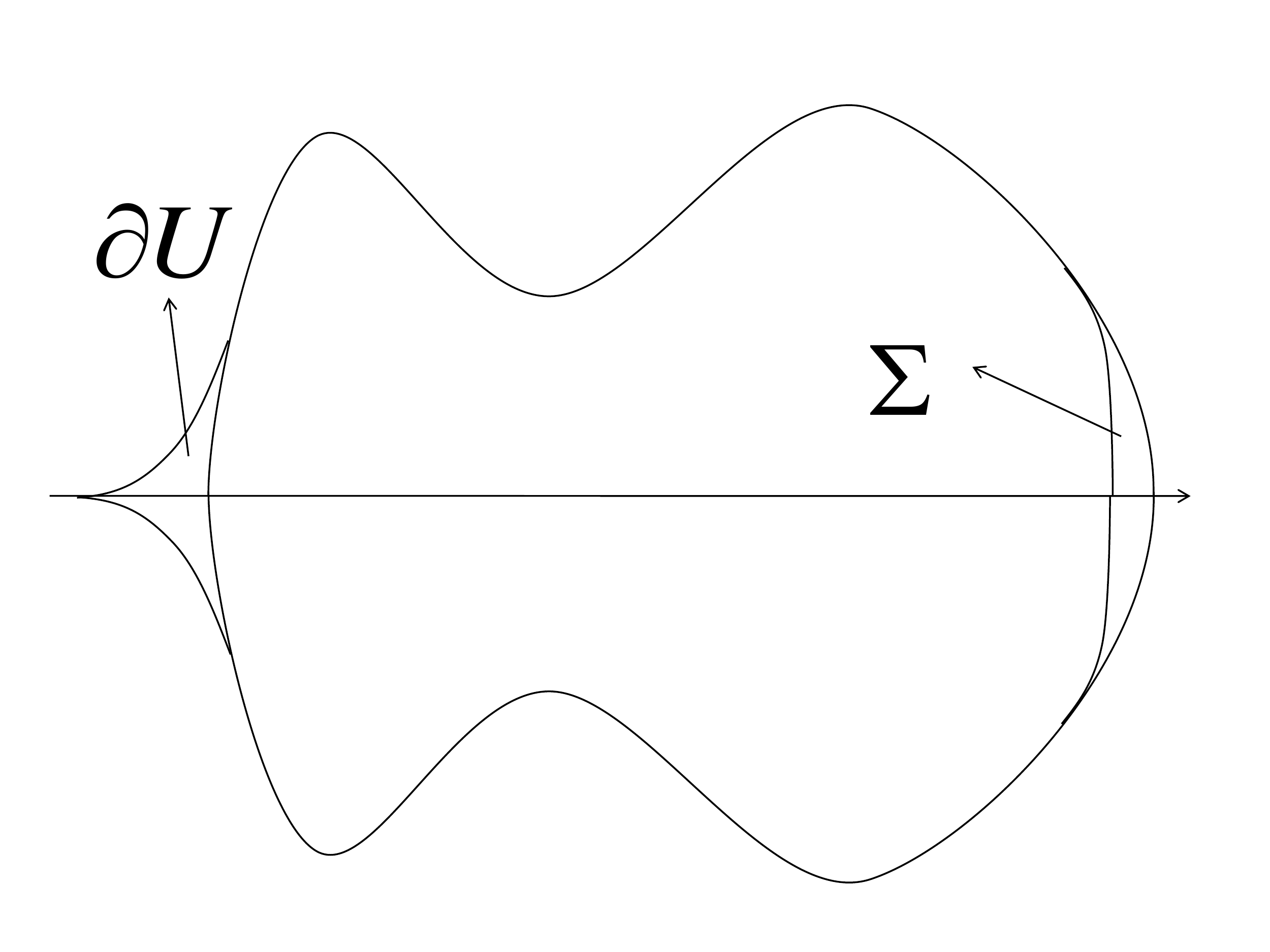}
		\vskip 0pt
		\caption{Proof of Proposition \ref{pro:sigma1}}
        \label{fig:sigma1}
	\end{center}
\end{figure}

Let $\phi^{\alpha}:\mathbb{R}^{n+1}\rightarrow\mathbb{R}^{n+1} (\alpha\in\mathbb{R})$, $t\in(-\delta,\delta)$ be the flow generated by vector field $X$ on $\mathbb{R}^{n+1}$, that is,
$$
\left\{
\begin{array}{lcl}
\dis{\frac{d\phi^{\alpha}(P)}{d\alpha}=X(\phi^{\alpha})},\ P\in \Sigma,\\
\phi^0(P)=P,\ \ \ \ \ P\in \Sigma.
\end{array}
\right.
$$

We denote $\sigma(P,s):=\phi^{s}(P)$. As in Section 5, suppose $\Gamma(t)\subset V$ $(0<t<T)$ are smooth hypersurfaces such that $\sigma^{-1}(\Gamma(t))$ is the graph $u(\cdot,t)$ for $u:\Sigma\times[0,T)\rightarrow\mathbb{R}$. Let $z_1,z_2,\cdots,z_n$ be local coordinates on an open subset of $\Sigma$. If $\Gamma(t)$ evolving by $V=-\kappa+A$, then in these coordinates $u$ satisfies the following parabolic equation
\begin{equation}\label{eq:para}
\frac{\partial u}{\partial t}=a_{ij}(z,u,\nabla u)\frac{\partial^2u}{\partial z_i\partial z_j}+b(z,u,\nabla u).
\end{equation}

\begin{figure}[htbp]
	\begin{center}
            \includegraphics[height=4cm]{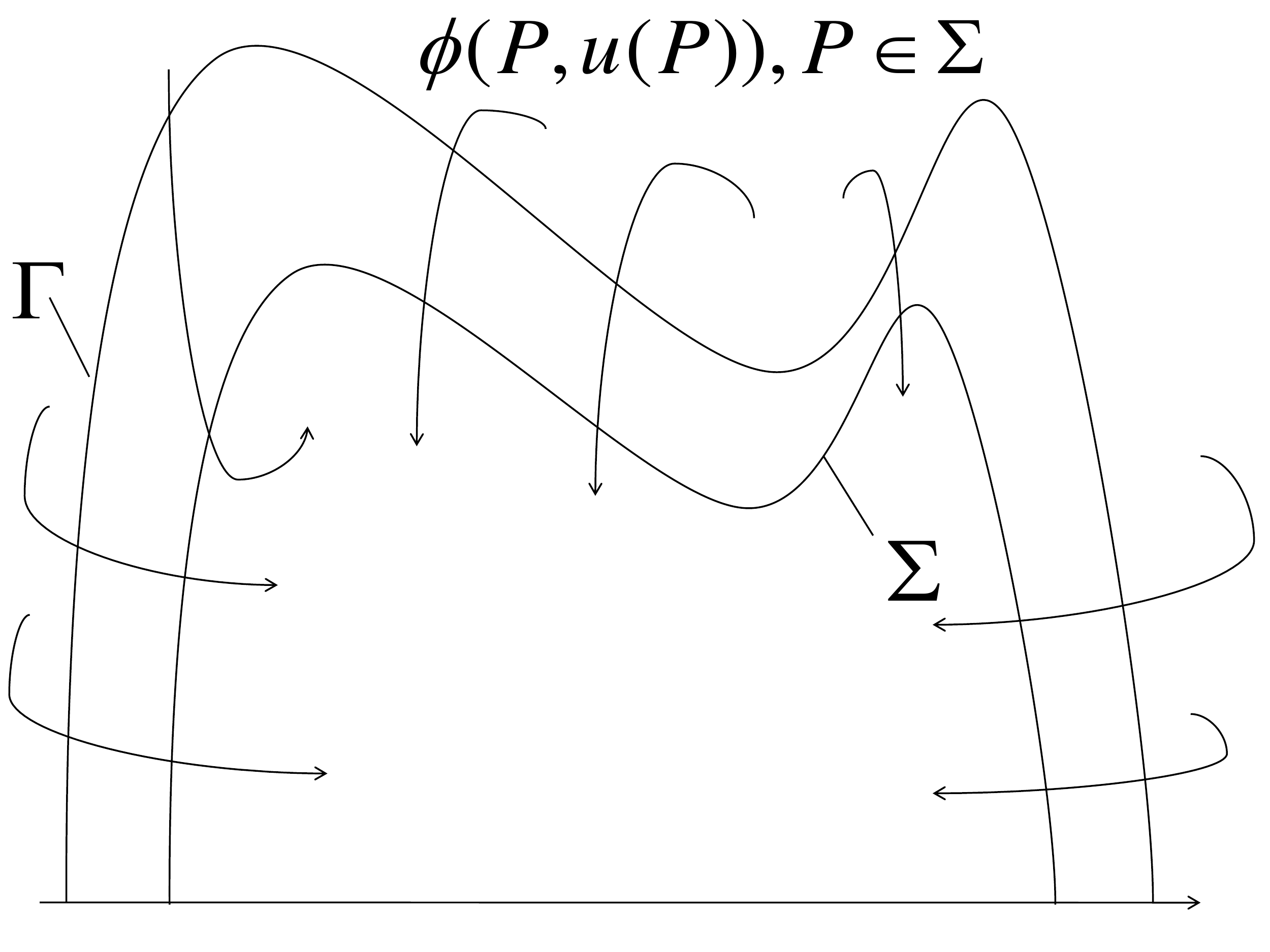}
		\vskip 0pt
		\caption{The transportation from $\Sigma$ to $\Gamma$}
        \label{fig:charact}
	\end{center}
\end{figure}
For example, on $\Delta_a$, by calculation, $\sigma(y_1,,y_2,\cdots,y_n,s)=(a-s,y_1,y_2,\cdots,y_n)$. Then $u$ satisfies the "$-$" condition of (\ref{eq:graph}).
\begin{prop}\label{pro:uniq} For $n\geq1$, let $\Gamma_1(t)$, $\Gamma_2(t)$ $(0\leq t<T)$ be two families of smooth hypersurface smooth and $\sigma^{-1}(\Gamma_j(t))$ be the graph of $u_j(\cdot,t)$ for certain $u_j\in C(\Sigma\times[0,T))$. Assume that the $u_j$ are smooth on $\Sigma\times(0,T)$ as well as on $\Sigma\setminus(\Delta_a\cup\Delta_b)\times[0,T)$. Then if the $\Gamma_j(t)$ evolve by $V=-\kappa+A$ and if $\Gamma_1(0)=\Gamma_2(0)$, then there holds $\Gamma_1(t)=\Gamma_2(t)$ for $0<t<T$.
\end{prop}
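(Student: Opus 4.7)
The plan is to mirror the three-case maximum-principle argument from the proof of Proposition \ref{pro:uniq2}, but the situation here is substantially simpler because the flat disks $\Delta_a, \Delta_b$ from Proposition \ref{pro:sigma1} avoid the singular lower-order term $\frac{n-1}{\rho-u}$ that arose when $\Sigma$ contained a pipe $B_d$. Consequently, there is no need to track an escape rate $\alpha(t)$, and the argument works uniformly for every $n\geq1$. First, set $v(P,t)=u_1(P,t)-u_2(P,t)$, so $v\in C(\Sigma\times[0,T))$, $v$ is smooth on $\Sigma\times(0,T)$ and on $(\Sigma\setminus(\Delta_a\cup\Delta_b))\times[0,T)$, and $v(\cdot,0)\equiv0$. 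Define $M(t)=\max_{P\in\Sigma}v(P,t)$ and use Lemma \ref{lem:max} to pick $P_t\in\Sigma$ with $v(P_t,t)=M(t)$ and $M'(t)=v_t(P_t,t)$ for $t>0$.

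Now split into cases by the location of $P_t$. If $P_t\in\Delta_a\cup\Delta_b$, then in the flat local coordinates of the disk both $u_j$ solve equation (\ref{eq:graph}) with the $-A$ sign, whose right-hand side depends only on $\nabla u$ and $\nabla^2 u$ and not on $u$ itself. Subtracting produces a linear parabolic equation $v_t=a^1_{ij}(z,t)v_{z_iz_j}+b^1_i(z,t)v_{z_i}$ with $\{a^1_{ij}\}$ positive definite and no zeroth-order term, so $\nabla v(P_t,t)=0$ and the negative semi-definiteness of the Hessian at the max give $M'(t)\leq 0$. If instead $P_t\in\Sigma\setminus(\Delta_a\cup\Delta_b)$, writing (\ref{eq:para}) as $u_t=F(z,t,u,\nabla u,\nabla^2u)$ and linearizing along $u^\theta:=(1-\theta)u_1+\theta u_2$ gives $v_t=a^2_{ij}v_{z_iz_j}+b^2_iv_{z_i}+c^2 v$ with $c^2(z,t)=\int_0^1 F_u(z,t,u^\theta,\nabla u^\theta,\nabla^2 u^\theta)\,d\theta$. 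Because $u_j$ are smooth on $(\Sigma\setminus(\Delta_a\cup\Delta_b))\times[0,T)$ up to and including $t=0$ and $\Sigma$ is compact, a finite coordinate cover yields a uniform bound $|c^2|\leq C$; the maximum-point information then gives $M'(t)\leq CM(t)$.

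Combining both cases, $M'(t)\leq CM(t)$ for $0<t<T$, whence Gronwall delivers $M(t)\leq M(0)e^{Ct}=0$. Running the symmetric argument for $m(t)=\min_{P\in\Sigma}v(P,t)$ gives $m(t)\geq0$, so $v\equiv0$ and $\Gamma_1(t)=\Gamma_2(t)$. The main point to be careful about is ensuring the zeroth-order coefficient $c^2$ remains bounded uniformly in $t\in[0,T)$, including near the initial time where $P_t$ might sit on $\Sigma\setminus(\Delta_a\cup\Delta_b)$; this is precisely why the statement asks for smoothness of $u_j$ on $(\Sigma\setminus(\Delta_a\cup\Delta_b))\times[0,T)$ rather than only on $\Sigma\times(0,T)$, and why the flat disks $\Delta_a,\Delta_b$ (where only first- and second-order terms appear and no $u$-dependence) must be isolated as a separate case.
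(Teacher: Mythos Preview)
Your proposal is correct and follows precisely the approach the paper intends: the paper omits the proof, saying only that it is ``similar as in Proposition \ref{pro:uniq2}'', and your write-up is exactly that adaptation, with the key observation that the pipe $B_d$ is absent here so the singular case with $\frac{n-1}{\rho-u}$ disappears and only the two remaining cases (flat disks and the smooth remainder) survive. One small remark: the uniform bound on $c^2$ need only hold on compact subintervals $[0,T']\subset[0,T)$, which is automatic from the smoothness hypothesis and suffices for Gronwall on each such subinterval; you do not actually need a single $C$ valid on all of $[0,T)$.
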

We use the same method in \cite{AAG}. 
The proof is similar as in Proposition \ref{pro:uniq2}. Here we omit it.

\begin{thm}\label{thm:partialUmeancurvature}
If $U$ is an $\alpha$-domain with a smooth boundary, let $D(t)$ and $E(t)$ be the open and closed evolutions of $V=-\kappa+A$ with $D(0)=U$ and $E(0)=\overline{U}$. Then there exists $T>0$ such that $\partial D(t)$ and $\partial E(t)$ are smooth hypersurfaces for $0<t\leq T$ and $\partial D(t)=\partial E(t)$. Moreover, denoting $\Sigma(t)=\partial D(t)=\partial E(t)$, $\Sigma(t)$ can be written into $\Sigma(t)=\{(x,y)\in\mathbb{R}\times\mathbb{R}^n\mid |y|=u(x,t), a(t)\leq x\leq b(t)\}$ and $(u,a,b)$ satisfies
\begin{equation}
\left\{
\begin{array}{lcl}
\dis{u_t=\frac{u_{xx}}{1+u_x^2}-\frac{n-1}{u}+A\sqrt{1+u_x^2}},\ x\in(a(t),b(t)),\ 0<t< T,\\
u(a(t),t)=0,\ u(b(t),t)=0,\ 0\leq t< T,\\
u_x(a(t),t)=\infty,\ u_x(b(t),t)=-\infty,\ 0\leq t<T,\\
\end{array}
\right.\tag{**}
\end{equation}
\end{thm}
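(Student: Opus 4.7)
The plan is to reduce the problem to classical short-time existence for a quasilinear parabolic equation on the fixed reference hypersurface $\Sigma$ from Proposition \ref{pro:sigma1}. Since $\partial U$ is smooth and lies in the tubular neighborhood swept out by the flow of $X$, one can write $\partial U = \{\sigma(P, u_0(P)) : P \in \Sigma\}$ for a unique $u_0 \in C^\infty(\Sigma)$. Standard quasilinear parabolic theory (e.g.\ \cite{LSU}) applied to equation (\ref{eq:para}) on the closed manifold $\Sigma$ with initial data $u_0$ then gives $T > 0$ and $u \in C^\infty(\Sigma \times [0,T])$. Setting $\Gamma(t) := \{\sigma(P, u(P,t)) : P \in \Sigma\}$ produces a smooth family of smooth embedded hypersurfaces evolving by $V = -\kappa + A$ with $\Gamma(0) = \partial U$.

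Next I would identify $\Gamma(t)$ with the level-set evolutions. Let $d(\cdot,t)$ be the signed distance to $\Gamma(t)$, positive inside, defined in a tubular neighborhood of $\bigcup_t \Gamma(t)\times\{t\}$, and extend it smoothly to a function $\psi_0$ on $\mathbb{R}^{n+1}\times [0,T]$ with the correct sign on the complement. Near $\Gamma(t)$ this $\psi_0$ is a classical, hence viscosity, solution of (\ref{eq:level}); viscosity comparison against the unique solution $\psi$ with the corresponding initial data forces their zero sets to coincide, so $\partial D(t) = \partial E(t) = \Gamma(t)$ for $0 \leq t < T$. Uniqueness inside the smooth class is then Proposition \ref{pro:uniq}, which applies because the $\sigma$-representation $u(\cdot,t)$ is smooth on $\Sigma\times (0,T]$ and smooth on $\Sigma\setminus(\Delta_a \cup \Delta_b)\times[0,T]$ by construction.

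To extract the rotational graph representation, I would use Theorem \ref{thm:gu} to see that $\Gamma(t)$ remains rotationally symmetric, and then invoke Lemma \ref{lem:alphadn} together with Proposition \ref{pro:sin} to conclude that $D(t)$ stays an $\alpha(t)$-domain on some interval $0 < t < T' \leq T$ with $\alpha(t) > 0$. Consequently, every small cylinder $\partial C_\rho$ meets $\Gamma(t)$ in exactly two transverse circles, which combined with the smoothness of $\Gamma(t)$ yields a smooth representation $\Gamma(t) = \{(x,y)\mid |y| = u(x,t),\ a(t) \leq x \leq b(t)\}$ with $u > 0$ on $(a(t),b(t))$ and vertical tangency at the endpoints. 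The horizontal graph equation (\ref{eq:1horizontal}) then gives the stated PDE, and $C^1$ regularity of $a,b$ follows from the implicit function theorem applied to the smooth defining equation of $\Gamma(t)$ at the points of transverse intersection with the $x$-axis that are caught by the caps $\Delta_a, \Delta_b$ of $\Sigma$.

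The main obstacle is the analysis at the endpoints: verifying the vertical tangency $u_x(a(t),t) = +\infty$ and $u_x(b(t),t) = -\infty$ and obtaining the $C^1$ dependence of $a(t), b(t)$ simultaneously with the smoothness of $u$ away from the endpoints. This is exactly where the $\sigma$-coordinate trick pays off: on the flat caps $\Delta_{\pm}$ the evolution is governed by the non-degenerate equation (\ref{eq:graphminus}) in the normal coordinate to the cap, so smoothness and transversality to $\partial C_\rho$ are preserved, and once one passes back to the $(x, r)$ graph this transversality becomes vertical tangency. The matching of smooth $\Sigma$-graph data with the viscosity open/closed evolutions near these caps is the technically delicate point, but is handled by the same transversality setup already used to prove Proposition \ref{pro:uniq}.
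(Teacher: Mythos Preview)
Your overall strategy is sound but differs from the paper's. The paper does not solve the quasilinear equation on $\Sigma$ directly; instead it approximates $U$ from inside and $\overline{U}$ from outside by nested smooth $\alpha$-domains (in the spirit of Lemmas~\ref{lem:closeas} and~\ref{lem:closebou}, adapted to this simpler setting), obtains uniform gradient and curvature bounds on the approximating flows, and passes to the limit to see that $\partial D(t)$ and $\partial E(t)$ are each smooth $\sigma$-graphs over $\Sigma$. Proposition~\ref{pro:uniq} then forces them to coincide, and Theorem~\ref{thm:openevolutionmeancurvature} yields the PDE. Your route via classical short-time existence for~(\ref{eq:para}) with smooth data $u_0$ is more direct here and avoids the approximation machinery entirely; the paper's route is simply the same template it needs later in the singular case.

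There is, however, a circularity in your third step. As formulated in the paper, Lemma~\ref{lem:alphadn} and Proposition~\ref{pro:sin} already \emph{presuppose} Theorem~\ref{thm:partialUmeancurvature}: the paragraph immediately preceding Lemma~\ref{lem:alphadn} invokes this theorem to produce $T_U$ and the system~(\ref{eq:2graph})--(\ref{eq:2innerpositive}) for $(u,a,b)$, which the proof of Lemma~\ref{lem:alphadn} then uses. Citing those results inside the proof of Theorem~\ref{thm:partialUmeancurvature} is therefore illegitimate. Fortunately you do not need them. Once you have the smooth classical solution on $\Sigma$, the rotational graph form and the vertical tangency at the endpoints follow directly from the $\sigma$-coordinate description on the flat caps $\Delta_a$, $\Delta_b$, exactly as you sketch in your last paragraph; simply drop the appeals to Lemma~\ref{lem:alphadn} and Proposition~\ref{pro:sin}.

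One smaller imprecision: the signed distance $d(\cdot,t)$ is \emph{not} a classical solution of~(\ref{eq:level}) in a neighborhood of $\Gamma(t)$; it satisfies the equation only on the zero level set. The consistency of smooth classical flows with the generalized level-set evolution is a standard result (see Chapter~4 of~\cite{G} or~\cite{ES1}), but the correct argument foliates a tube by nearby classical flows or uses barriers built from $d$, not $d$ itself as a solution.
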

\begin{proof}
We only give the sketch of the proof. By approximate argument similarly as in Lemma 6.2 and Lemma 6.4, $\partial D(t)$ and $\partial E(t)$ are smooth hypersurfaces and can be represented by $\sigma(P,u_j(P))$, for some $u_j$, $j=1,2$. Then we can use Proposition \ref{pro:uniq} to prove $\partial D(t)=\partial E(t)$. Therefore $\Gamma(t)=\partial E(t)$ can be represented by $\Gamma(t)=\{(x,y)\in\mathbb{R}\times\mathbb{R}^n\mid |y|=u(x,t), a(t)\leq x\leq b(t)\}$. Using Theorem \ref{thm:openevolutionmeancurvature}, $(u,a,b)$ satisfies (**).
\end{proof}

{\bf Conflict of interest.} 
We declare that we do not have any commercial or associative interest that represents a conflict of interest in connection with the work submitted.


\begin{thebibliography}{00}

\bibitem{A1}{\bf S. B. Angenent}, The zero set of a solution of a parabolic equation, J. Reine. Angew. Math., \textbf{380}(1988), 79-96.

\bibitem{A2}{\bf S. B. Angenent}, Parabolic equations for curves on surfaces-part II, Annals of Mathematics, \textbf{113}(1991), 171-215.

\bibitem{A3}{\bf S. B. Angenent}, Shrinking Doughnuts, Nonlinear Diffusion Equations and Their Equilibrium States, 3, \textbf{7}(1992), 21-38

\bibitem{AAG}{\bf S. J. Altschuler, S. B. Angenent and Y. Giga}, Mean curvature flow through singularities for surfaces of rotation, J. Geom. Anal., \textbf{5}(1995), 293-357.

\bibitem{CM}{\bf X. Y. Chen and H. Matano}, Convergence, asymptotic periodicity, and finite point blow-up in one dimensional semilinear heat equations, Journal of Differential Equations, \textbf{78}(1989), 160-190.

\bibitem{ES1}{\bf L. C. Evans and J. Spruck}, Motion of level sets by mean curvature II, Trans. Amer. Math. Soc., \textbf{330}(1992), 321-332.

\bibitem{ES}{\bf L. C. Evans and J. Spruck}, Motion of level sets by mean curvature III, J. Geom. Anal., \textbf{2}(1992), 121-150.

\bibitem{CGG}{\bf Y. G. Chen, Y. Giga and  S. Goto}, Uniqueness and existence of viscosity solutions of generalized mean curvature flow equations, J. Diff. Geom., \textbf{33}(1991), 749-786.

\bibitem{GH}{\bf M. Gage and R. S. Hamilton}, The heat equation shrinking convex plane curves, J. Diff. Geom., \textbf{23}(1986), 69-96.

\bibitem{G}{\bf Y. Giga}, Surface evolution equations-A level set approach, Monographs in Mathematics, Birkh$\textrm{\"{a}}$user(2006).

\bibitem{GG}{\bf M. H. Giga and Y. Giga}, Generalized motion by nonlocal curvature in the plane, Arch. Rational Mech. Anal., \textbf{159}(2001), 295-333.

\bibitem{GS}{\bf Y. Giga and M. H. Sato}, Neumann problem for singular degenerate parabolic equations, Diff. and Int. Eq., \textbf{6}(1993), 1217-1230.


\bibitem{Gr}{\bf M. Grayson}, The heat equation shrinks embedded plane curves to round points, J. Diff. Geom., \textbf{26}(1987)285-314.

\bibitem{Gr2}{\bf M. Grayson}, A short note on the evolution of a surface by its mean curvature, Duke Math. Journal, \textbf{58}(1989)555-558.

\bibitem{H}{\bf G. Huisken}, Flow by mean curvature of convex surfaces into spheres, J. Diff. Geom., \textbf{20}(1984), 237-266.

\bibitem{LSU}{\bf O. A. Ladyzhenskaya, V. Solonnikov and N. Ural'ceva}, Linear and quasilinear equations of parabolic type. Translations of Mathematical Monographs, 23 A. M. S.(1968).

\bibitem{M}{\bf C. Mantegazza}, Lecture Notes on Mean Curvature Flow, Progress in Mathematics, Birkhaeuser (2011).


\bibitem{Z}{\bf L. J. Zhang}, On curvature flow with driving force starting as singular initial curve in the plane, J. Geom. Anal., accepted.



\end{thebibliography}
\end{document}